\newcommand{\real}{\mathbb{R}}
\newcommand{\rN}{ {\mathbb{R}^N} }
\newcommand{\rmd}{\mathrm{d}}
\newcommand{\F}{\mathcal{F}}
\newcommand{\K}{\mathcal{K}}
\newcommand{\C}{\mathcal{C}}
\newcommand{\W}{\mathcal{W}}
\newcommand{\sH}{\mathcal{H}}
\newcommand{\tH}{\widetilde{\mathcal{H}}}
\newcommand{\M}{\mathcal{M}}
\numberwithin{equation}{section}
\newtheorem{theorem}{Theorem}[section]
\newtheorem{lem}[theorem]{Lemma}
\newtheorem{prop}[theorem]{Proposition}
\newtheorem{corol}[theorem]{Corollary}
\theoremstyle{definition}
\newtheorem{rmk}[theorem]{Remark}
\newtheorem{defin}[theorem]{Definition}
\begin{document}

\title{\bf\Large Existence, nonexistence and uniqueness for Lane-Emden type fully nonlinear systems}
\author[1]{Liliane Maia\footnote{l.a.maia@mat.unb.br}}
\author[2]{Gabrielle Nornberg\footnote{gabrielle@icmc.usp.br}}
\author[3]{Filomena Pacella\footnote{pacella@mat.uniroma1.it}}
\affil[1]{\small Universidade de Brasília, Brazil}
\affil[2]{\small Instituto de Ciências Matemáticas e de Computação, Universidade de São Paulo, Brazil}
\affil[3]{\small Sapienza Università di Roma, Italy}

{\date{\today}}

\maketitle

{\small\noindent{\bf{Abstract.}} 
We study existence, nonexistence, and uniqueness of positive radial solutions for a class of
nonlinear systems driven by Pucci extremal operators under a Lane-Emden coupling configuration.
Our results are based on the analysis of the associated quadratic dynamical system and energy methods. For both regular and exterior domain radial solutions we obtain new regions of existence and nonexistence. Besides, we show an exclusion principle for regular solutions, either in $\rN$ or in a ball, by exploiting the uniqueness of trajectories produced by the flow.

In particular, for the standard Lane-Emden system involving the Laplacian operator, we prove that the critical hyperbola of regular radial positive solutions is also the threshold for existence and nonexistence of radial exterior domain solutions with Neumann boundary condition. As a byproduct, singular solutions with fast decay at infinity are also found. 
}
\medskip

{\small\noindent{\bf{Keywords.}} {Fully nonlinear systems; Liouville properties; existence of positive solutions; uniqueness; dynamical system.}

\medskip

{\small\noindent{\bf{MSC2020.}} {35J15, 35J60, 35B09, 34A34.}

\section{Introduction and main results}\label{Introduction}

In this paper we study existence, nonexistence, and uniqueness of positive radial solutions of fully nonlinear elliptic partial differential systems of the following type
\begin{align}\label{LE} 
\left\{
\begin{array}{rclcl}
\mathcal{M}^\pm_{\lambda,\Lambda} (D^2 u)+ v^p &=&0 &\mbox{in} & \;\Omega \\
\mathcal{M}^\pm_{\lambda,\Lambda} (D^2 v)+ u^q &=&0 &\mbox{in} & \;\Omega \\
u,v&>& 0 &\mbox{in} & \;\Omega ,
\end{array}
\right.
\end{align}
in the superlinear regime $pq>1$, for $p,q>0$, and $\Omega\subset \rN$, $N\geq 3$.
Here, $\mathcal{M}^\pm_{\lambda,\Lambda}$ are the Pucci's extremal operators, for $0<\lambda\le \Lambda$, which play an essential role in stochastic control theory and mean field games. 

We deal with classical solutions of \eqref{LE} that are $C^2$ for $r>0$.
We call a solution pair $(u,v)$ \textit{regular} if $u$ and $v$, together with their derivatives, are continuously defined at $x=0$. 

For regular solutions, the set $\Omega$ is either the whole space $\rN$, or a ball $B_R$ of radius $R>0$ centered at the origin, or an exterior domain $\rN\setminus B_R$.
In the case of singular solutions, $\Omega$ can be either $\rN\setminus\{0\}$ or $B_R\setminus\{0\}$, and we assume the condition
\begin{align}\label{H singular} \textstyle{\lim_{r\to 0} \,u(r) = \lim_{r\to 0} \,v(r) =+\infty , \;\;\; r=|x|.}
\end{align}
In addition, whenever $\Omega$ has a boundary, we prescribe the Dirichlet condition 
\begin{align}\label{H Dirichlet}
u,v=0 \textrm{ on } \partial\Omega , \quad \textrm{ or } \quad u,v=0 \textrm{ on $\partial\Omega\setminus\{0\}$\, under } \eqref{H singular}.
\end{align}
We highlight that positive solutions of \eqref{LE} in a ball  for $pq>1$ are always radial, see \cite{EG} (see also \cite{BdaLioSym} for the respective scalar case $p=q$).

Next, we define the positive parameters $\alpha$, $\beta$ given by
\begin{align}\label{def alpha, beta}
{\alpha =\frac{2(p+1)}{pq-1}, \qquad \beta=\frac{2(q+1)}{pq-1}}
\end{align}
for $p,q>0$ such that $pq>1$. They play a role in the scaling
 \begin{align}\label{eq scaling}
 \textrm{	${u}_\gamma (r)=\gamma^\alpha u(\gamma r)$, \qquad ${v}_\gamma (r)=\gamma^\beta v(\gamma r)$, \qquad for \,$\gamma>0$,}
 \end{align}
under which the system \eqref{LE} in $\rN$ is invariant; see also Remark \ref{rescaling}.

Let us have in mind the following initial value problem with positive shooting parameters $\xi,\eta$, which   produces the radial regular solutions of \eqref{LE},
\begin{align}\label{shooting}
\begin{cases}
u^{\prime\prime}\;=\; M_\pm\left( -r^{-1}(N-1)\, m_\pm(u^\prime)-  |v|^{p-1}v \right),
\quad u (0)=\xi  , \;\; u^\prime (0)=0 , \qquad \xi>0 ,\\
v^{\prime\prime}\;=\; M_\pm\left( -r^{-1}(N-1)\, m_\pm(v^\prime)-  |u|^{q-1}u \right),
\quad v (0)=\eta  , \;\; v^\prime (0)=0 , \qquad \eta>0 ,
\end{cases}
\end{align}
where $M_\pm$ and $m_\pm$ are the Lipschitz functions
\begin{align}\label{m,M+}
m_+(s)=
\begin{cases}
\lambda s\; \textrm{ if } s\leq 0 \\
\Lambda s\; \textrm{ if } s> 0
\end{cases}\;
\textrm{and}\quad
M_+(s)=
\begin{cases}
s/\lambda\; \textrm{ if } s\leq 0 \\
s/ \Lambda\; \textrm{ if } s> 0;
\end{cases}
\end{align}
\vspace{-0.6cm}
\begin{align}\label{m,M-}
m_-(s)=
\begin{cases}
\Lambda s\; \textrm{ if } s\leq 0 \\
\lambda s\; \textrm{ if } s> 0
\end{cases}\;
\textrm{and}\quad
M_-(s)=
\begin{cases}
s/\Lambda\; \textrm{ if } s\leq 0 \\
s/ \lambda\; \textrm{ if } s> 0.
\end{cases}
\end{align}

The first main result of this paper concerns uniqueness of regular radial solutions of \eqref{LE} when $\Omega=\rN$, and uniqueness of solutions to the associated Dirichlet problem \eqref{LE}, \eqref{H Dirichlet} for $\Omega=B_R$.

\begin{theorem}\label{teo uniqueness}
	Let $p,q>0$ with $pq > 1$. Then:
	\begin{enumerate}[(i)]
		\item problem \eqref{LE} with $\Omega=\rN$ has at most one radial positive regular solution up to scaling \eqref{eq scaling}. Moreover, the set of shooting parameters $(\xi,\eta)$ for which \eqref{shooting} admits a positive radial solution in $\rN$ is the graph of a simple smooth curve $\eta=c\, \xi^{\frac{q+1}{p+1}}$,	where $c$ is a constant;
		
		\item problem \eqref{LE}, \eqref{H Dirichlet} with $\Omega=B_R $, $R>0$, has at most one positive solution $(u,v)$, which is radial. Further, given $\bar R>0$, any other solution pair $(\bar{u},\bar{v})$ in  $B_{\bar R}$ is obtained from $(u,v)$ by rescaling, i.e.\ $(\bar{u}, \bar{v})=( u_\gamma, v_\gamma)$ in \eqref{eq scaling}, for some $\gamma>0$ such that $\bar R=\gamma R$.
	\end{enumerate} 
\end{theorem}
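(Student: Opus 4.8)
The plan is to reduce everything to a statement about the shooting problem \eqref{shooting} and the associated autonomous dynamical system obtained via the Emden--Fowler-type change of variables. First I would show that, because of the scaling invariance \eqref{eq scaling}, it suffices to analyze one normalized trajectory. Concretely, set $u_\gamma(r)=\gamma^\alpha u(\gamma r)$, $v_\gamma(r)=\gamma^\beta v(\gamma r)$; then $u_\gamma(0)=\gamma^\alpha\xi$, $v_\gamma(0)=\gamma^\beta\eta$, so the two-parameter family of initial data $(\xi,\eta)$ is foliated by the orbits of this $\mathbb R_{>0}$-action, whose orbits are exactly the curves $\eta^{\,1/\beta}=c\,\xi^{\,1/\alpha}$, i.e.\ $\eta = c'\,\xi^{\beta/\alpha}=c'\,\xi^{(q+1)/(p+1)}$ by \eqref{def alpha, beta}. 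Thus the claimed curve in part (i) is precisely one scaling orbit, and the real content is: \emph{exactly one} scaling orbit of initial data yields a solution that is positive and globally defined on $(0,\infty)$ (equivalently, decays to zero as $r\to\infty$ without either component hitting zero in finite $r$). To pin this down I would introduce the standard logarithmic variable $t=\ln r$ and rescaled unknowns $x(t)=r^{\alpha}u(r)$, $y(t)=r^{\beta}v(r)$ (together with their $t$-derivatives, or the associated first-order momenta), turning \eqref{shooting} into an autonomous first-order quadratic system in $\mathbb R^4$ — the "associated quadratic dynamical system" the abstract refers to, which I will assume has been set up and analyzed earlier in the paper. Regular solutions in $\rN$ correspond to the (unique) trajectory connecting the equilibrium coming from the origin ($r=0$, regular data) to the origin of phase space as $t\to+\infty$; uniqueness of such a heteroclinic-type connection is what drives part (i).

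For part (i), the key steps are: (a) local existence and uniqueness for \eqref{shooting} near $r=0$ for each $(\xi,\eta)$ — this follows from the Lipschitz character of $M_\pm,m_\pm$ together with the usual fixed-point argument for singular ODEs of this type, giving a well-defined "time-of-first-vanishing" and monotone dependence of the solution on the data while it stays positive; (b) a monotonicity/ordering property in the shooting parameters: along any ray $\eta=c\,\xi^{(q+1)/(p+1)}$ the behavior (crossing zero vs.\ staying positive vs.\ blowing up) is scale-invariant and hence constant, so the phase diagram in the $(\xi,\eta)$-plane is organized by these rays; (c) across rays, an ordering argument (comparison of trajectories, exploiting that increasing the data pushes the solution "up" and makes it cross zero later, or never) shows the good set of rays is an interval; (d) finally, a one-sided argument at each end of that interval — for the trajectory that stays positive forever one shows it must in fact be the critical connection, and any fatter/thinner trajectory either crosses zero or ceases to be a solution — collapsing the interval to a single ray. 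Steps (c)–(d) are where I expect the main difficulty: with two equations and Pucci operators one loses the variational structure and must run the comparison directly on the flow, which is why the authors emphasize "uniqueness of trajectories produced by the flow"; the delicate point is controlling the coupled system so that the natural monotonicity in $(\xi,\eta)$ does not degenerate, and handling the two cases $\mathcal M^+$ and $\mathcal M^-$ (hence the bookkeeping of $\lambda$ vs.\ $\Lambda$ in $M_\pm,m_\pm$) uniformly.

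For part (ii), the strategy is to transfer the $\rN$ analysis to the ball. Given a solution $(u,v)$ on $B_R$ with zero Dirichlet data, radiality is free by the cited symmetry result (\cite{EG}), so $(u,v)$ solves \eqref{shooting} for some $(\xi,\eta)=(u(0),v(0))$ and first vanishes exactly at $r=R$. By the local uniqueness in step (a), the solution on $B_R$ is completely determined by $(\xi,\eta)$, and by the scaling \eqref{eq scaling} the pair $(\xi,\eta)$ determines, and is determined by, a point on a scaling ray together with the stopping radius $R$: rescaling by $\gamma$ sends the first-vanishing radius $R$ to $\gamma^{-1}R$ (or $\gamma R$ depending on the normalization convention used above — I would fix it consistently), while moving the data along its ray. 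Hence two solutions $(u,v)$ on $B_R$ and $(\bar u,\bar v)$ on $B_{\bar R}$ have initial data on, a priori, possibly different rays; but the requirement that the first vanishing occurs at a prescribed finite radius, combined with the monotone dependence from step (b)–(c), forces the ray to be unique (the "first-vanishing radius" is a strictly monotone function of which ray one is on, for fixed scaling normalization). This gives: for each $R$ there is at most one admissible data pair, hence at most one solution; and any two solutions on balls of radii $R$, $\bar R$ are related by the scaling with $\gamma=\bar R/R$ (matching the stated $\bar R=\gamma R$). The main obstacle here is again the monotonicity input — establishing that the first-vanishing radius is a genuine bijection between rays and radii — which rests on the same flow-comparison lemma as part (i); once that lemma is in hand, part (ii) is essentially a bookkeeping of the scaling action.
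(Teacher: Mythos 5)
Your reduction to scaling orbits is correct and matches the paper's starting point: one normalizes $u(0)=\bar u_\gamma(0)$ by choosing $\gamma$, and the curve $\eta=c\,\xi^{(q+1)/(p+1)}$ is indeed a single scaling orbit. But the core of the theorem --- why at most one scaling orbit of data $(\xi,\eta)$ can produce a global (resp.\ ball) solution --- is exactly the step you leave as a black box. Your steps (c)--(d) invoke an unspecified ``flow-comparison lemma'' asserting that increasing the data pushes the solution up and delays the first zero, and that the good set of rays is an interval which can then be ``collapsed''. Neither claim is justified, and the monotonicity heuristic is in fact backwards for the coupled system: increasing $v(0)$ increases the source $v^p$ in the equation for $u$, making $u$ \emph{more} concave and forcing it to vanish \emph{earlier}. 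The coupling reverses the ordering between the two components, which is precisely why a naive shooting/ordering argument does not close.

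What the paper actually does is a PDE-level maximum-principle argument, not a phase-plane ordering. After normalizing $u(0)=\bar u_\gamma(0)$, one supposes $\bar v_\gamma(0)<v(0)$ and uses the maximum principle and strong maximum principle for Pucci operators on the two equations alternately to propagate the \emph{opposite} strict orderings $\bar v_\gamma<v$ and $\bar u_\gamma>u$ on the whole interval of positivity. This is then contradicted either by the decay of regular solutions at infinity (case $\Omega=\rN$) or by a sign analysis of $(\bar u_\gamma-u)$ and $(\bar v_\gamma-v)$ at the boundary radius, which forces simultaneously $\gamma>1$ and $\gamma<1$ (case $\Omega=B_R$). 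That forces $\bar v_\gamma(0)=v(0)$. A second genuine gap in your plan: even after matching the initial data, ``equal shootings give equal solutions'' is not automatic when $p<1$ or $q<1$, since the nonlinearities are not Lipschitz at the boundary values $u=v=0$; the paper handles this by passing to the dynamical system and using that the unstable manifold at $N_0$ is a graph over the $(X,Y)$ variables, so equality of the slopes $c_\Gamma=c_{\bar\Gamma}$ identifies the trajectories. Your part (ii) inherits both gaps, and additionally rests on the unproved (and not obviously well-posed) claim that the first-vanishing radius is a strictly monotone bijection between rays and radii; the paper needs no such statement.
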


Next we consider the following hyperbolas $\sH$ and $\tH$ on the plane,
\begin{equation}\label{sH}
(p,q)\in\sH\quad\Leftrightarrow\quad {\alpha+\beta=N-2 \quad\Leftrightarrow\quad \frac{1}{p+1}+\frac{1}{q+1}=\frac{N-2}{N},}
\end{equation}
\begin{equation}\label{tH}
(p,q)\in\tH_\pm\quad\Leftrightarrow\quad{\alpha+\beta=\tilde{N}_\pm-2 \quad\Leftrightarrow\quad \frac{1}{p+1}+\frac{1}{q+1}=\frac{\tilde{N}_\pm-2}{\tilde{N}_\pm},}
\end{equation}
where $\tilde{N}_\pm$ are the dimensional-like numbers
\begin{align}\label{dim-like}
\textstyle{\tilde{N}_+=\frac{\lambda}{\Lambda}(N-1)+1, \qquad \tilde{N}_-=\frac{\Lambda}{\lambda}(N-1)+1.}
\end{align} 
For a pair $(p,q)$ with $p,q>0$ and $pq>1$, the region below $\sH$ is expressed by $\alpha +\beta >N-2$, and the region above $\sH$ by $\alpha +\beta<N-2$; the same for $\tH_\pm$ by replacing $N$ by $\tilde{N}_\pm$.

\begin{defin}\label{def fast}
	Let $(u,v)$ be a radial solution pair of \eqref{LE} defined for all $r\ge r_0$, for some $r_0\ge 0$, $r=|x|$. We say $(u,v)$ is \textit{fast decaying} if there exists $c>0$ such that at least one between $u,v$ satisfies $\lim_{r\to\infty} r^{\tilde{N}-2} w(r) = c$, where $\tilde{N}$ is either $\tilde{N}_+$ if the operator is $\mathcal{M}^+$ or $\tilde{N}_-$ for $\mathcal{M}^-$.
\end{defin}

In the case of regular solutions, for the standard Lane-Emden system involving the Laplacian operator $\lambda=\Lambda=1$, the following result is well known.

\begin{theorem}[\cite{BV, Dalmasso, Mitidieri, SZcpde, SZnonexist}]\label{Th Lapl known}
Below $\sH$, for each $R>0$ there exists a unique  radial solution of \eqref{LE}, \eqref{H Dirichlet} in $B_R$, and there is no  radial solution of \eqref{LE} in $\rN$.
On $\sH$ there exists a unique radial fast decaying solution of \eqref{LE} in $\rN$ up to scaling, and there is no  radial solution of \eqref{LE}, \eqref{H Dirichlet} in any ball.
Above $\sH$ there exists a unique radial solution of \eqref{LE} in $\rN$ up to scaling, and there is no radial solution of \eqref{LE}, \eqref{H Dirichlet} in any ball. 
\end{theorem}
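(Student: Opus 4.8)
The plan is to treat Theorem \ref{Th Lapl known} as a synthesis of classical facts about the Lane-Emden system $-\Delta u = v^p$, $-\Delta v = u^q$, organized along the three regimes determined by the critical hyperbola $\sH$, and to give a unified proof that reduces everything to the analysis of the associated quadratic dynamical system (the Emden--Fowler type change of variables) together with the scaling \eqref{eq scaling} and a Pohozaev/Rellich identity. First I would perform the standard logarithmic substitution $t=\ln r$ and set $x(t)=r^\alpha u(r)$, $y(t)=r^\beta v(r)$, which transforms the radial ODE system into an autonomous first-order system in $(x, x', y, y')$ (here $\lambda=\Lambda=1$, so $M_\pm=m_\pm=\mathrm{id}$ and the principal part is just the radial Laplacian). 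The fixed points of this flow are the origin and the constant $(A,B)$ corresponding to the explicit singular solution $u=Ar^{-\alpha}$, $v=Br^{-\beta}$, which exists precisely when $\alpha,\beta<N-2$, i.e.\ above $\sH$. The key objects are: (a) the linearization at $(A,B)$, whose eigenvalues change sign exactly as $(p,q)$ crosses $\sH$; and (b) the Hamiltonian-type energy functional $E(t)$ for the autonomous system, which is monotone along trajectories and whose sign is controlled by the position of $(p,q)$ relative to $\sH$ and $\tH$ (here $\tH=\sH$ since $\lambda=\Lambda$).

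The three regimes would then be handled as follows. \emph{Above $\sH$} ($\alpha+\beta<N-2$): existence in $\rN$ follows by shooting --- one shows (using Theorem \ref{teo uniqueness}(i), which already gives that the set of admissible shooting data is a single curve $\eta=c\,\xi^{(q+1)/(p+1)}$) that along that curve the trajectory remains bounded and positive for all $r$, converging as $r\to\infty$ to the singular fixed point, hence giving a global solution, unique up to scaling; nonexistence of a solution of the Dirichlet problem in a ball follows from the Pohozaev identity, since $\alpha+\beta<N-2$ makes the boundary-integral/volume-integral balance impossible for a positive solution vanishing on $\partial B_R$. \emph{On $\sH$} ($\alpha+\beta=N-2$): the energy $E$ is identically zero along the homoclinic-type orbit, the linearization at $(A,B)$ has a zero eigenvalue, and one produces a fast-decaying solution (in the sense of Definition \ref{def fast} with $\tilde N = N$) by shooting; uniqueness up to scaling is again Theorem \ref{teo uniqueness}(i); nonexistence in a ball is once more Pohozaev, now with the critical balance degenerate but still obstructing a positive Dirichlet solution. \emph{Below $\sH$} ($\alpha+\beta>N-2$): here the singular fixed point no longer lies in the positive quadrant in the relevant way, every trajectory starting from positive shooting data must cross zero at a finite radius $R(\xi,\eta)$, and rescaling sweeps $R$ over all of $(0,\infty)$, giving existence and uniqueness of the Dirichlet solution in every $B_R$ by Theorem \ref{teo uniqueness}(ii); nonexistence in $\rN$ follows because a global positive solution would have to approach a fixed point, but the only candidate singular fixed point has disappeared, and the zero fixed point is excluded by a Liouville-type argument (a decaying positive solution in $\rN$ contradicts the Pohozaev identity on large balls, or equivalently the flow cannot reach the origin while staying positive).

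The step I expect to be the main obstacle is the precise dynamical-systems bookkeeping in the \emph{above-$\sH$} case: showing that the unique admissible trajectory (the one on the curve $\eta=c\,\xi^{(q+1)/(p+1)}$) is actually \emph{globally defined and positive} and converges to the singular fixed point $(A,B)$, rather than spiraling, blowing up, or hitting zero. This requires a careful Lyapunov/energy argument in four-dimensional phase space --- controlling the sign of $E'(t)$, ruling out periodic orbits via the monotonicity of $E$, and using a trapping-region argument near $(A,B)$ --- and is where the hypotheses $pq>1$ and $\alpha+\beta<N-2$ enter most delicately. The Pohozaev/Rellich nonexistence parts, by contrast, are essentially a bookkeeping exercise: multiply the first equation by $x\cdot\nabla u$ (and the second by $x\cdot\nabla v$), integrate over $B_R$, combine with the energy identity, and read off the sign obstruction from $\alpha+\beta$ versus $N-2$; the only care needed is the boundary term, which is why the Dirichlet condition \eqref{H Dirichlet} is used. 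Finally, since $\lambda=\Lambda=1$ collapses $\tH_\pm$ onto $\sH$ and makes $\tilde N_\pm=N$, no separate treatment of the Pucci operators is needed here --- this theorem is exactly the degenerate/classical case of the more general results proved later in the paper, and one may alternatively simply cite \cite{BV, Dalmasso, Mitidieri, SZcpde, SZnonexist} for each of the three regimes and note that the radial symmetry of ball solutions is \cite{EG}.
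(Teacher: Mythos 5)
First, note that the paper does not prove Theorem \ref{Th Lapl known} at all: it is stated as a known result and attributed to \cite{BV, Dalmasso, Mitidieri, SZcpde, SZnonexist}, with the radial symmetry of ball solutions coming from \cite{EG}. Your final sentence (``one may alternatively simply cite\dots'') is in fact the paper's entire argument. That said, your sketch of an actual proof follows essentially the \cite{BV} strategy that the paper later adapts to the Pucci case, so it is worth checking, and it contains one genuine error. Your nonexistence argument in $\rN$ \emph{below} $\sH$ rests on the claim that the singular fixed point ``has disappeared'' from the positive cone there. This is false: the explicit singular pair $u=Ar^{-\alpha}$, $v=Br^{-\beta}$ (equivalently the stationary point $M_0$) exists with $A,B>0$ precisely when $\alpha<N-2$ \emph{and} $\beta<N-2$, which is a strictly larger region than $\alpha+\beta<N-2$. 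For instance $N=5$, $p=q=2$ gives $\alpha=\beta=2<3=N-2$ while $\alpha+\beta=4>3$, i.e.\ $(p,q)$ lies below $\sH$ yet $M_0$ sits squarely in the positive cone. So ``a global trajectory has nowhere to converge'' does not hold below $\sH$, and the nonexistence must instead come from the energy monotonicity: below $\sH$ the energy $E$ is strictly increasing along a nontrivial regular trajectory, starts at $0$ at the regular stationary point, and satisfies $|E(r)|\le Cr^{N-2-\alpha-\beta}\to 0$ as $r\to\infty$ thanks to the a priori bounds, a contradiction. (This is exactly the argument the paper runs in Step 2 of the proof of Theorem \ref{Th regular}; your parenthetical ``Pohozaev on large balls'' alternative is the same idea, but it needs the decay estimates to kill the boundary terms and should be the main line, not a footnote.)

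Two smaller points. On $\sH$ the linearization at $M_0$ has purely \emph{imaginary} eigenvalues (Proposition \ref{local study stationary points}, item 1(iii), with $\tH_\pm=\sH$ when $\lambda=\Lambda$), not a zero eigenvalue as you assert; this does not affect your argument but is worth correcting. And in the above-$\sH$ existence step, Theorem \ref{teo uniqueness}(i) only constrains the shooting set \emph{assuming} a global solution exists; the existence itself should be obtained from the exclusion dichotomy of Theorem \ref{teo uniqueness trajectory} combined with the energy obstruction to a ball solution ($E$ nonincreasing from $0$ versus $E(R)=R^{N}u'(R)v'(R)>0$ at a common zero), which is how the paper argues in the analogous Step 1 of the proof of Theorem \ref{Th regular}. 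With these repairs your outline matches the standard proof; as written, the below-$\sH$ Liouville step does not go through.
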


In what concerns the qualitative analysis of regular solutions in Theorem \ref{Th Lapl known}, we obtain the following concavity result, for both solutions in the ball and fast decaying ones. 

\begin{theorem}\label{Th concavidade Lapl}
If $\lambda=\Lambda$ and the pair $(p,q)$ is below or on the hyperbola $\sH$,
then a regular solution $(u,v)$ of \eqref{LE} is such that  $u$ and $v$ change concavity exactly once. 
\end{theorem}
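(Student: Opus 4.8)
The plan is to work with the radial ODE system \eqref{shooting} in the Laplacian case $\lambda=\Lambda=1$, where it reads $(r^{N-1}u')' = -r^{N-1}v^p$ and $(r^{N-1}v')' = -r^{N-1}u^q$. Since $u,v>0$, both $r^{N-1}u'$ and $r^{N-1}v'$ are strictly decreasing, so $u'$ and $v'$ are strictly negative for $r>0$ (a regular solution starts with $u'(0)=v'(0)=0$ and immediately becomes decreasing); hence $u,v$ are strictly decreasing. The second equation $u'' = -\frac{N-1}{r}u' - v^p$ shows $u''>0$ near $r=0$ (since $-\frac{N-1}{r}u'$ blows up while $v^p$ is bounded), i.e.\ $u$ is initially convex; likewise $v$ is initially convex. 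So each of $u,v$ \emph{changes concavity at least once} if and only if it eventually becomes concave, and the content of the theorem is that this happens exactly once, i.e.\ once $u''$ (resp.\ $v''$) becomes negative it stays negative.

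First I would record the sign-change structure. Write $f(r) = -(r^{N-1}u')'/r^{N-1} = v^p > 0$; then $u''(r) = f(r) - (N-1)(u'/r) \cdot(-1)\cdot(-1)$... more cleanly, $r^{N-1}u'(r) = -\int_0^r s^{N-1}v(s)^p\,ds$, so $-u'(r) = r^{1-N}\int_0^r s^{N-1}v^p\,ds > 0$, and $u''(r) = v(r)^p + (N-1)r^{-N}\int_0^r s^{N-1}v^p\,ds - (N-1)r^{-1}\cdot r^{-N}\cdot\ldots$ — better to use the identity $u''(r) = v(r)^p - (N-1)\,(-u'(r))/r$, i.e.\ $u'' = v^p + \frac{N-1}{r}u'$. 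The key observation is: at any point $r_*$ where $u''(r_*)=0$ we have $v(r_*)^p = -\frac{N-1}{r_*}u'(r_*)$, and differentiating $u'' = v^p + \frac{N-1}{r}u'$ gives $u''' = p v^{p-1}v' + \frac{N-1}{r}u'' - \frac{N-1}{r^2}u'$. At $r_*$ the term $\frac{N-1}{r}u'' $ vanishes and $-\frac{N-1}{r_*^2}u'(r_*) = \frac{v(r_*)^p}{r_*} > 0$, while $p v^{p-1}v' < 0$. So the sign of $u'''(r_*)$ is not automatically determined — this is exactly the main obstacle.

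To resolve it, I would exploit the monotonicity already in hand and a Pohozaev/energy-type comparison: suppose for contradiction that $u''$ vanishes at two points $r_1 < r_2$ with $u''>0$ just after $r_2$; between $r_1$ and $r_2$ we have $u''\le 0$ so $-u'$ is decreasing there while it was increasing before, and then increasing again after $r_2$. Translate this into behavior of the auxiliary function $P(r) = r^{N-1}(-u'(r)) = \int_0^r s^{N-1}v^p\,ds$, which is strictly increasing, together with $Q(r) = r^{N-2}(-u'(r))\cdot$ (a weighted version), and use the first-order quadratic dynamical system that the paper associates to \eqref{LE} (the Emden–Fowler change of variables $x = r^\alpha u$, $y = r^\beta v$, $z = r^{\alpha+1}u'/\ldots$, etc., which is the backbone of Theorems \ref{teo uniqueness} and \ref{Th Lapl known}). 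In the autonomous variables, $u''<0 \Leftrightarrow u'' >0$ corresponds to a fixed sign condition on a specific linear combination of the phase variables, and "changes concavity exactly once" becomes "the trajectory crosses a certain hyperplane exactly once." Monotonicity of the relevant component along the flow — which should follow from the structure of the quadratic vector field restricted to the invariant region where the trajectory lives (below/on $\sH$) — gives the single crossing. Concretely, I expect the cleanest route is: the quantity $r u''(r)/(-u'(r))$ (or equivalently $N-1 - r^N\,u'(r)\big/\int_0^r s^{N-1}v^p ds$... ) is monotone along trajectories in the relevant region, passing once through the value that marks the concavity change.

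The step I expect to be the real work is establishing that monotonicity — showing that once the sign of $u''$ (equivalently, the sign of the combination $v^p + \frac{N-1}{r}u'$, or its phase-space counterpart) turns negative it cannot return to zero. The natural tool is to differentiate the phase-space expression for this combination and show its derivative is strictly negative at any zero, using: (a) $u,v$ strictly decreasing; (b) the already-proven monotone behavior of $P(r)=r^{N-1}(-u')$ and the analogous quantity for $v$; and (c) the location of $(p,q)$ below or on $\sH$, which controls the sign of $\alpha+\beta-(N-2)\ge 0$ and hence pins down the sign of the cross term $p v^{p-1}v'$ relative to $-\frac{N-1}{r^2}u'$ after rescaling. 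Symmetry in $(u,p)\leftrightarrow(v,q)$ then gives the same conclusion for $v$, completing the proof.
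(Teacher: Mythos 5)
Your proposal contains a sign error that inverts the entire geometry of the problem. You claim that $u''>0$ near $r=0$ because ``$-\frac{N-1}{r}u'$ blows up while $v^p$ is bounded''. It does not blow up: since $u'(0)=0$, one has $u'(r)/r\to u''(0)$, and from $r^{N-1}u'(r)=-\int_0^r s^{N-1}v^p\,ds$ one gets $u'(r)\sim -\tfrac{r}{N}v(0)^p$, hence $u''(0)=-v(0)^p/N<0$. Regular solutions are \emph{concave} near the origin, which is exactly Lemma \ref{lemma u,v concave at 0} of the paper; they switch from concave to convex, not the other way around. Relatedly, your ``identity'' $u''=v^p+\frac{N-1}{r}u'$ has the wrong sign (the equation $\Delta u+v^p=0$ gives $u''=-v^p-\frac{N-1}{r}u'$), and your framing of the theorem as ``once $u''$ becomes negative it stays negative'' is backwards.

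Beyond the sign issues, the core of the argument is missing. You correctly identify that the pointwise ODE computation of $u'''$ at a zero of $u''$ is inconclusive, but the proposed resolution is only a sketch: you assert that some quantity ``should'' be monotone along trajectories in phase space without identifying it or proving the monotonicity, and you defer this as ``the real work.'' The paper's actual mechanism is quite specific: for $\lambda=\Lambda$ the piecewise energy collapses to a single continuous function that is nondecreasing below $\sH$ and constant on $\sH$, so it stays nonnegative along a regular trajectory (which starts from $N_0$ with zero energy); this nonnegativity is precisely the hypothesis of Lemma \ref{lemma X,Y extremum}, which shows $X=-ru'/u$ and $Y=-rv'/v$ each have at most one extremum, necessarily a maximum. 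A second concavity change of $u$ would force the trajectory to cross the hyperplane $\pi_{\lambda,Z}$ twice, and by Proposition \ref{prop flow}(i) the two crossings require $Y>1/p$ and then $Y<1/p$, so $Y$ would have to decrease and then (to reach its limit or a later maximum) behave incompatibly with having a single maximum. None of this chain --- energy sign, the at-most-one-extremum lemma, and the translation of concavity changes into hyperplane crossings governed by $Y\gtrless 1/p$ and $X\gtrless 1/q$ --- is supplied in your proposal, so as written it does not constitute a proof.
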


The respective nonradial case has been widely investigated at least since the works of de Figueiredo, see \cite{Djaironotes81}. The nonexistence of solutions between hyperbolas $pq=1$ and $\sH$ is still open in general, except for dimensions $N=3,4$ in \cite{LE3d, LE4d}.

In the fully nonlinear scalar perspective, the very first classification result on radial positive solutions of Lane-Emden equations involving Pucci operators was obtained by Felmer and Quaas in \cite{FQaihp, FQind}, by using an innovative ODE approach. Further existence in annuli and exterior domains were found in \cite{GLPradial} and \cite{GILexterior2019}, respectively. More recently  in \cite{MNPscalar} we derived a complete classification of singular solutions, even for weighted equations, via a special change of variables, as in \cite{BV}, that permits us visualizing all orbits of the corresponding dynamical system. We note that the study of quadratic systems to treat Emden-Fowler type problems has long been used, see also \cite{Jones1953, Wong75}.

As far as the fully nonlinear system is concerned, Quaas and Sirakov proved in \cite{BQind} that problem \eqref{LE} has no solutions in $\rN$ if at least one between $\alpha$ and $\beta$ is larger than or equal to $\tilde{N}_\pm-2$, whenever $p,q$ enjoy the superlinearity $pq>1$ under the additional assumption $p,q\ge 1$. They also used this result and the blow-up method to show existence of a solution to \eqref{LE} in a ball. In \cite{ABsupersolutions}, Armstrong and Sirakov removed the assumption $p,q\ge 1$, by extending the result to the region 
\begin{align}\label{region ABQ}
\mathcal{R}_{s}^\pm=\{ (p,q)\in \real^2: \; p,q>0, \; pq>1, \; \textrm{with } \alpha\ge \tilde{N}_\pm -2 \textrm{ or } \beta\ge \tilde{N}_\pm -2\},
\end{align}
on the plane $(p,q)$.
We recall \eqref{region ABQ} describes the range of nonexistence of supersolutions, which in turn extends the scalar case developed in \cite{CutriLeoni}. 
Obviously this is far from optimal when referring to solutions of \eqref{LE}.

In this paper we improve such region in what concerns nonexistence of solutions to \eqref{LE}, as long as $p,q$  are close to the diagonal $p=q$, see Figure \ref{Fig BQ-nosso}.
In the case of the operator $\M^+$, the novelty depends on whether $\mathcal{R}_{s}^+$ does not contain $\sH$. This would correspond to the scalar situation $p_s^+<p_\Delta$. 
Meanwhile, for the operator $\M^-$ this is always an improvement, in analogy to the scalar case $p_s^-<p_\Delta$. Here,
\begin{center}
	 $p_s^\pm=\frac{\tilde{N}_\pm}{\tilde{N}_\pm-2}$\quad and \quad $p_\Delta=\frac{N+2}{N-2}$,
\end{center}
where $p_s^\pm$ is the corresponding Serrin exponent for the Pucci's operator $\M^\pm$ from \cite{CutriLeoni}, while $p_\Delta$ is the critical exponent for the Lane-Emden equation driven by the Laplacian operator.

\smallskip

Now we define the regions, down and up, for the operator $\M^+$,
\begin{align}\label{Rd}
\textstyle \mathcal{R}_d^+=\{\,(p,q)\in \real^2_+:\;\,  \,\frac{N}{p+1}+\frac{\tilde{N}_+}{q+1}> N-2,\;\;\;
\frac{\tilde{N}_+}{p+1}+\frac{N}{q+1}> N-2,\;\;\,pq>1\,\},
\end{align}
\vspace{-0.3cm}
\begin{align}\label{Ru}
\textstyle \mathcal{R}_u^+=\{\,(p,q)\in \real^2_+:\;\,  \,\frac{N}{p+1}+\frac{\tilde{N}_+}{q+1}<  \tilde{N}_+-2,\;\;\;
\frac{\tilde{N}_+}{p+1}+\frac{N}{q+1}< \tilde{N}_+-2\,\},
\end{align}
and the respective lower and upper regions for $\M^-$ given by
\begin{align}
\textstyle \mathcal{R}_d^-=\{\,(p,q)\in \real^2_+:\;\,  \,\frac{N}{p+1}+\frac{\tilde{N}_-}{q+1}> \tilde{N}_--2,\;\;\;
\frac{\tilde{N}_-}{p+1}+\frac{N}{q+1}> \tilde{N}_--2,\;\;\,pq>1\,\},\label{Rd-}
\end{align}
\vspace{-0.3cm}
\begin{align}
\textstyle \mathcal{R}_u^-=\{\,(p,q)\in \real^2_+:\;\,  \,\frac{N}{p+1}+\frac{\tilde{N}_-}{q+1}< N-2,\;\;\;
\frac{\tilde{N}_-}{p+1}+\frac{N}{q+1}< {N}-2\,\}. \label{Ru-}
\end{align}

Our next main result exhibits existence and nonexistence results in these regions.

\begin{figure}[!htb]\centering	\includegraphics[scale=0.33]{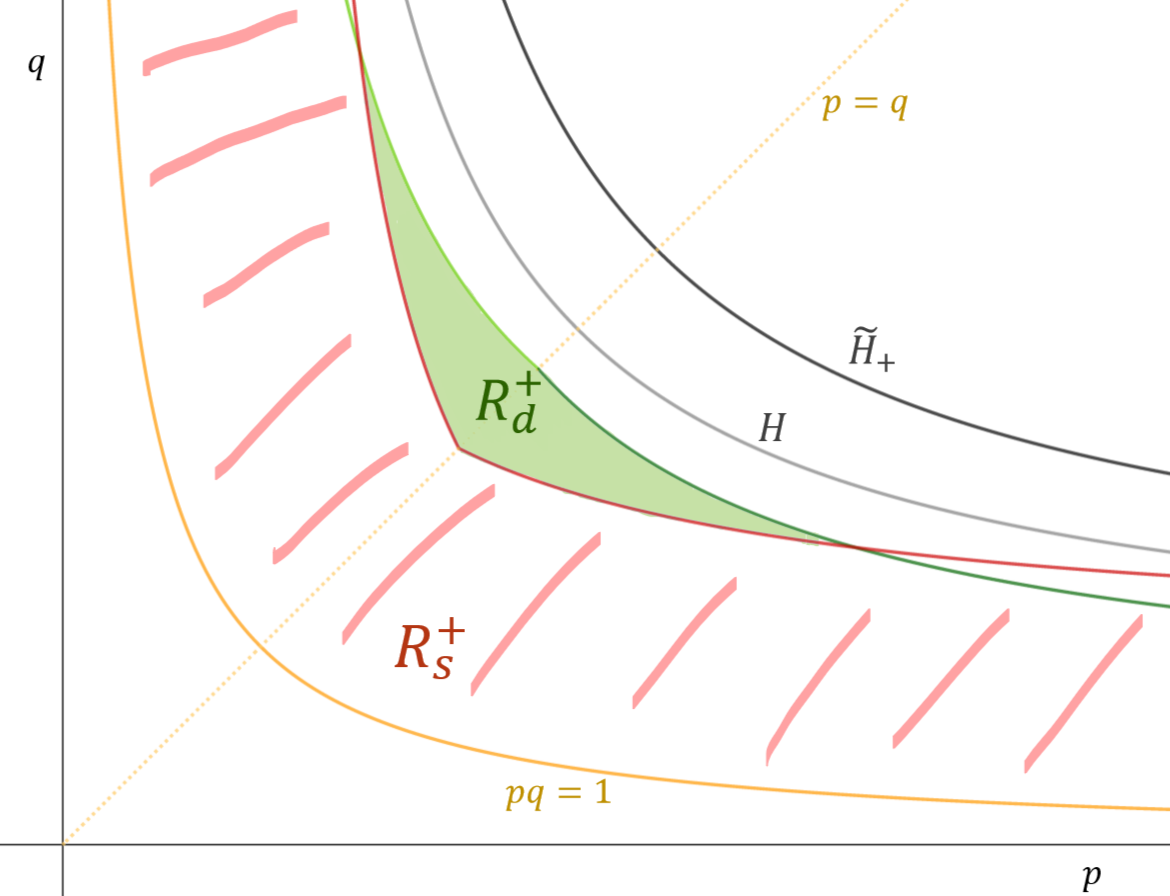}	
	\caption{Our improved region $\mathcal{R}_d^+$ in \eqref{Rd} for $\M^+$ with respect to region $\mathcal{R}_{s}^+$ in \eqref{region ABQ}.
	}\label{Fig BQ-nosso}
\end{figure}

\begin{theorem}[Regular solutions]\label{Th regular} Let $\lambda\neq \Lambda$.
With respect to regular solutions of \eqref{LE}, it follows:
\begin{enumerate}[(i)]
	\item if $(p,q)\in \mathcal{R}_d^\pm$ then problem \eqref{LE} in $\rN$ does not have positive radial solutions. Moreover, for each $R>0$ there exists a unique positive solution \eqref{LE}, \eqref{H Dirichlet} in the ball $B_R$;
		
	\item if $(p,q)\in\overline{\mathcal{R}_u^\pm}$ then there exists a unique (up to scaling) positive radial solution of \eqref{LE} in $\rN$. Further, there is no solutions of \eqref{LE}, \eqref{H Dirichlet} in any ball $B_R$.
\end{enumerate}
\end{theorem}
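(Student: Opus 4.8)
The plan is to recast radial solutions of \eqref{LE} as trajectories of an autonomous quadratic dynamical system, as prepared in the preceding sections, and to read off from its phase portrait the dichotomy between ``$\Gamma$ leaves the positive cone at a finite radius'' and ``$\Gamma$ stays in it forever''. Concretely, with $t=\ln r$ I would use a Fowler-type change of variables $u(r)=r^{-\alpha}X(t)$, $v(r)=r^{-\beta}Y(t)$ together with the natural velocity variables, turning \eqref{LE} into a polynomial first-order system $\Sl$. Since $\M^\pm$ is only piecewise linear, $\Sl$ is really two quadratic fields glued along the switching surfaces where $u$ or $v$ changes concavity: on the concave side $\M^\pm(D^2 w)$ coincides with the radial Laplacian in dimension $N$ (scaled by $\lambda$, resp.\ $\Lambda$), while on the convex side it coincides with the radial Laplacian in the ``dimension'' $\tilde{N}_\pm$ (scaled by $\Lambda$, resp.\ $\lambda$) --- which I expect to be why both $N$ and $\tilde{N}_\pm$ occur in \eqref{Rd}--\eqref{Ru-}. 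By Theorem \ref{teo uniqueness}, a regular solution corresponds to the unique trajectory $\Gamma$ --- unique modulo the scaling \eqref{eq scaling}, which acts as the translation $t\mapsto t+\ln\gamma$ --- leaving the equilibrium $O$ attached to the regular behavior at $r=0$. I would first record that along $\Gamma$ the components stay positive and decreasing while defined and change concavity at most once (the two-sided statement being Theorem \ref{Th concavidade Lapl} when $\lambda=\Lambda$), so that $\Gamma$ meets each switching surface only finitely often.

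The heart of the matter is locating the $\omega$-limit of $\Gamma$. Besides $O$ and the trivial equilibria, $\Sl$ should carry the singular power equilibrium $P$ (for $u\sim c_1 r^{-\alpha}$, $v\sim c_2 r^{-\beta}$), whose role is governed by the position of $(p,q)$ relative to $\sH$ and $\tH_\pm$, and two ``mixed'' equilibria $P_1,P_2$, in which one component decays with the convex fast rate $r^{-(\tilde{N}_\pm-2)}$ while the other follows the power forced by its own equation. I would linearize at $P_1,P_2$ and verify that the two inequalities cutting out $\mathcal{R}_d^\pm$ are exactly the conditions making them repellers --- with $P$ likewise unreachable from the positive cone --- whereas the two inequalities cutting out $\mathcal{R}_u^\pm$ make them attractors. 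Then, if $(p,q)\in\mathcal{R}_d^\pm$, a Lyapunov/energy functional built from the Pucci radial energy (defined piecewise and matched across the switching surfaces) should preclude $\Gamma$ from remaining in the positive cone for all $t$; hence $\Gamma$ reaches $\{X=0\}$ or $\{Y=0\}$ at a finite $t$, i.e.\ $u$ or $v$ vanishes at a finite radius --- which simultaneously yields nonexistence of regular solutions of \eqref{LE} in $\rN$, that is, part (i). If $(p,q)\in\overline{\mathcal{R}_u^\pm}$, the same energy together with a Poincar\'e--Bendixson type argument on the relevant invariant surface should force $\Gamma\to P$, $P_1$ or $P_2$ as $t\to+\infty$, producing a positive entire radial solution (fast or slow decaying), its uniqueness up to scaling being Theorem \ref{teo uniqueness}(i); and I would exclude solutions of \eqref{LE}, \eqref{H Dirichlet} in any ball by a radial Pohozaev--Ni--Serrin energy identity whose boundary term at $r=R$ has the sign opposite to the interior term once $\alpha+\beta$ lies in the range cut out by $\overline{\mathcal{R}_u^\pm}$ --- this finishes part (ii).

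It remains to produce, for every $R>0$, a positive solution of \eqref{LE}, \eqref{H Dirichlet} in $B_R$ when $(p,q)\in\mathcal{R}_d^\pm$. Here I would shoot in the parameters of \eqref{shooting}: since the scaling orbits in the $(\xi,\eta)$-plane are precisely the curves $\eta=c\,\xi^{(q+1)/(p+1)}$, one may fix $\xi=1$ and let $\eta>0$ vary. By part (i) no shot gives an entire solution, so for every $\eta$ at least one of $u,v$ hits zero at a finite radius; let $S_u$ (resp.\ $S_v$) be the set of $\eta$ for which $u$ (resp.\ $v$) is the one that reaches zero strictly first. A first zero is always transversal (if $u=u'=0$ somewhere then $u\equiv0$), so $S_u$ and $S_v$ are open and disjoint, and comparing the two equations of \eqref{shooting} shows $\eta\in S_v$ for $\eta$ small (then $v$ collapses fast while $u$ barely moves) and $\eta\in S_u$ for $\eta$ large; connectedness of $(0,\infty)$ then produces $\eta^\ast\notin S_u\cup S_v$, for which $u$ and $v$ must vanish at one and the same radius $R^\ast$ --- a solution of \eqref{LE}, \eqref{H Dirichlet} in $B_{R^\ast}$. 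Rescaling by $\gamma=R^\ast/R$ gives one in each $B_R$, and uniqueness is Theorem \ref{teo uniqueness}(ii). (Alternatively, existence in the ball can be deduced from the nonexistence in $\rN$ by a blow-up argument and topological degree, in the spirit of \cite{BQind}.)

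I expect the genuinely nonlinear feature to be the main obstacle: $\Sl$ is not a single polynomial flow but two fields glued across the concavity-switching surfaces, so the classical phase-portrait descriptions must be recovered by tracking the finitely many crossings, by checking that the piecewise energy --- carrying the two effective dimensions $N$ and $\tilde{N}_\pm$ on the two sides --- remains monotone across the matchings, and by confirming that the linearizations at $P_1,P_2$ keep the announced signature on the closed region, with the non-hyperbolic boundary $\partial\mathcal{R}_u^\pm$ calling for a center-manifold or invariant-region refinement so that part (ii) persists on $\overline{\mathcal{R}_u^\pm}$.
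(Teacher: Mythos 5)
Your overall architecture (quadratic dynamical system, piecewise energy carrying the two effective dimensions $N$ and $\tilde N_\pm$ on the two sides of the concavity-switching surfaces, shooting/connectedness for the ball solution) is the right one, but there is a genuine gap at the point where you produce the entire solution in part (ii), and it stems from a false premise. A regular solution does \emph{not} correspond to ``the unique trajectory leaving the equilibrium attached to $r=0$'': the regular point is $N_0=(0,0,\lambda N,\lambda N)$, its unstable manifold is two-dimensional, and after modding out the scaling \eqref{eq scaling} (which is the time translation) there remains a genuine one-parameter family of regular trajectories, indexed by the slope $c$ in $\eta=c\,\xi^{(q+1)/(p+1)}$. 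For a generic member of this family one of $u,v$ vanishes strictly before the other, giving \emph{neither} a ball solution \emph{nor} an entire one. Hence every existence assertion requires a selection argument inside this family. You do supply one for the ball case in part (i) --- your open disjoint sets $S_u,S_v$ and the connectedness of $(0,\infty)$ are exactly the paper's Proposition \ref{Th1.1 lemma} --- but for part (ii) you instead invoke ``a Poincar\'e--Bendixson type argument on the relevant invariant surface'', which is not available: the system is four-dimensional and the energy level sets are not invariant surfaces (the energy is only monotone, and only constant when $\lambda=\Lambda$ on $\sH$). Monotonicity of the energy along a \emph{given} trajectory does not tell you that \emph{some} regular trajectory stays in the positive cone forever. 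The paper closes this gap with the exclusion principle (Theorem \ref{teo uniqueness trajectory}): the uniqueness results force $\overline{\mathcal{N}}_1\cap\overline{\mathcal{N}}_2$ to consist of exactly one trajectory, which is either a ball solution or an entire one; then in $\overline{\mathcal{R}_u^\pm}$ the ball option is killed by the sign of the energy at the boundary ($E(R)=R^{\tilde N_\pm}u'(R)v'(R)>0$ versus $E$ nonincreasing from $0$), so the entire solution must exist. Your own ingredients ($S_u/S_v$ connectedness plus the boundary-sign argument you already propose for ball nonexistence) assemble into exactly this proof; you just have not assembled them, and Poincar\'e--Bendixson cannot substitute.

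A secondary inaccuracy: you guess that the two inequalities cutting out $\mathcal{R}_d^\pm$ (resp.\ $\mathcal{R}_u^\pm$) are the conditions making the mixed equilibria repellers (resp.\ attractors). In the paper they arise differently: they are precisely the sign conditions on the coefficient of $u'v'$ in $E'$ in the two mixed concavity regions $R^+_{\lambda,Z}\cap R^-_{\lambda,W}$ and $R^-_{\lambda,Z}\cap R^+_{\lambda,W}$, for the weight choices $\sigma=\tilde N_\pm$ (monotone increasing, nonexistence in $\rN$) and $\sigma=N$ (monotone decreasing, nonexistence in balls), together with the sign of the remainder terms $\mathcal{E}_{\sigma,1},\mathcal{E}_{\sigma,2}$ controlled by the a priori bounds $Z,W<\lambda N$. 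Your closing remark that the piecewise energy must ``remain monotone across the matchings'' is the correct instinct; the linearization-at-equilibria heuristic is not where these inequalities come from and would not reproduce them.
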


\begin{figure}[!htb]\centering	\includegraphics[scale=0.4]{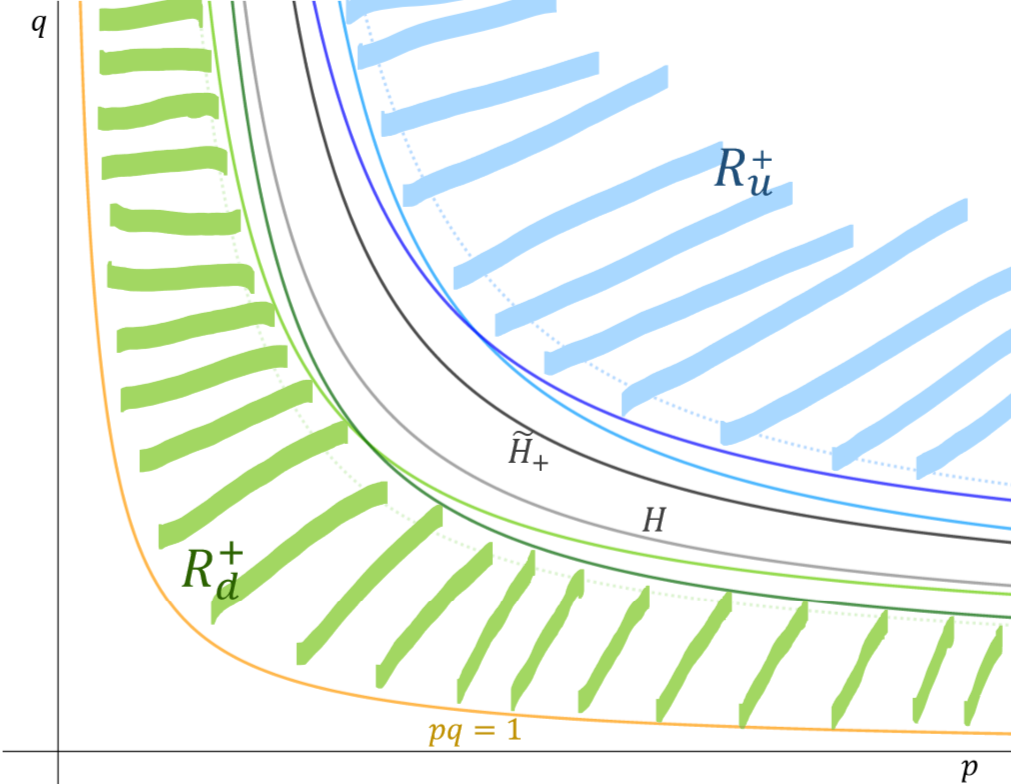}	
	\caption{The regions $\mathcal{R}_d^+$ in \eqref{Rd} and $\mathcal{R}_u^+$ in \eqref{Ru} for the operator $\M^+$. 
	}\label{Fig hyp2}
\end{figure}

Our approach for proving Theorem \ref{Th regular} relies on a suitable choice of piecewise defined energies for the associated quadratic system. They are discontinuous functions ruled by four hyperbolas rather than just one when comparing it to proof of Theorem \ref{Th Lapl known} in \cite{BV}.
Moreover, regarding the study of fast decaying solutions, we develop a brand new study at the stationary points responsible for generating fast decaying trajectories in the dynamical system.
It is an open question whether or not uniqueness of fast decaying solutions holds in the region above $\mathcal{R}_s^\pm$ in light of Theorem \ref{teo uniqueness}, cf.\ the scalar case \cite{{GILexterior2019}, MNPscalar}.

\smallskip

We define the set of regular solutions in a ball as
\begin{center}
	$\C= \{(p,q)\in \real^2: \, p,q>0, \, pq>1$; there exists a solution $(u,v)$ of \eqref{LE}, \eqref{H Dirichlet} in $B_R\}$.
\end{center}
Since $\C$ is nonempty, and the set $\{ q: (p,q)\in \C\}$ is bounded from above by Theorem \ref{Th regular} for each fixed $p$, we may define $\partial \C$. 
We believe $\C$ is connected and $\partial \C$ consists of a continuous curve on the plane $(p,q)$.
We show in Corollary \ref{cor change concav 1x Rd} that regular solutions in $\mathcal{R}_d^\pm$ change concavity exactly once, although we guess this property is preserved up to $(p,q)\in \overline{\C}$ as in the scalar case \cite{MNPscalar}.

Next, we denote the set of \textit{ground state} regular solutions as
\begin{center}
	$\mathcal{G}= \{(p,q)\in \real^2: \, p,q>0, \,pq>1$; there exists a radial solution $(u,v)$ of \eqref{LE} in $\rN\}$.
\end{center}

Our next result allows us to characterize the complementary set to $\mathcal{G}$ as $\C$.
In other words, it says that we cannot have regular solutions both in a ball and in $\rN$ simultaneously.

\begin{theorem}[Exclusion principle]\label{teo uniqueness trajectory} For $\C$ and $\mathcal{G}$ as above concerning regular solutions, we have	
$ \{(p,q)\in \real^2: \; p,q>0, \; pq >1 \}\, =\,\C \sqcup \mathcal{G}$, 
where $\sqcup$ is a disjoint union.
\end{theorem}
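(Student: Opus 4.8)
The plan is to establish the two set-theoretic facts hidden in the statement: that $\C$ and $\mathcal{G}$ are \emph{disjoint}, and that their union exhausts the superlinear region $\{p,q>0,\ pq>1\}$. By Theorem \ref{Th regular} parts (i) and (ii), the regions $\mathcal{R}_d^\pm$ and $\overline{\mathcal{R}_u^\pm}$ already lie in $\C$ and $\mathcal{G}$ respectively (and in the Laplacian case $\lambda=\Lambda$ the hyperbola $\sH$ splits the plane exactly by Theorem \ref{Th Lapl known}), so the only work is for $(p,q)$ in the intermediate strip between $\mathcal{R}_d^\pm$ and $\mathcal{R}_u^\pm$. The natural language for this is the quadratic dynamical system associated to \eqref{LE} via the Emden--Fowler-type change of variables used in \cite{BV, MNPscalar}: regular radial solutions correspond to trajectories emanating from the stationary point coming from the origin (the ``regular'' equilibrium $O$), and the fate of such a trajectory is dichotomous --- either it stays in the positive cone for all time and converges to infinity/a sink, giving a ground state in $\rN$, or it exits the cone in finite ``time,'' which translates to one of $u,v$ hitting zero at a finite radius, giving a solution in a ball $B_R$.

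First I would record, via Theorem \ref{teo uniqueness}(i), that the set of shooting data $(\xi,\eta)$ producing a solution of \eqref{shooting} in all of $\rN$ is the single curve $\eta=c\,\xi^{(q+1)/(p+1)}$; equivalently, in the rescaled dynamical-system picture there is a \emph{unique} orbit out of $O$ that remains globally in the positive cone. Hence for a fixed $(p,q)$ either this orbit exists (so $(p,q)\in\mathcal{G}$) or it does not. If it does, I must show $(p,q)\notin\C$: any orbit from $O$ other than this distinguished one must leave the positive cone --- but here is the subtlety, because leaving the cone could a priori happen either with $u\to 0$, $v>0$ (a genuine ball solution with Dirichlet data) or with a derivative blowing up, etc. The uniqueness-of-trajectories mechanism flagged in the introduction ("exploiting the uniqueness of trajectories produced by the flow") is precisely what rules out coexistence: the flow is autonomous in the transformed variables, so two distinct orbits cannot cross, and the presence of the globally-bounded orbit from $O$ forces every nearby orbit to be pushed monotonically toward the boundary of the cone, yielding a ball solution but simultaneously obstructing any \emph{second} globally-defined orbit. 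Conversely, if the distinguished orbit does not remain globally in the cone, then the orbit from $O$ exits in finite time; I would argue the exit is through $\{u=0\}$ or $\{v=0\}$ (not through blow-up of $u'$ or $v'$, which can be excluded by the a priori bounds / monotonicity of $u,v$ along the flow as in \cite{MNPscalar}), producing a solution of \eqref{LE}, \eqref{H Dirichlet} in some $B_R$, whence $(p,q)\in\C$. That the dichotomy is exhaustive is then immediate: every $(p,q)$ with $pq>1$ falls into exactly one case.

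The key steps, in order: (1) set up the quadratic system and identify the equilibrium $O$ whose unstable manifold carries the regular solutions; (2) invoke Theorem \ref{teo uniqueness}(i) to get at most one globally-confined orbit from $O$; (3) prove the trichotomy for the orbit's behavior and exclude the ``bad'' exits (derivative blow-up, orbit spiraling without leaving the cone) so that the only alternatives are a global orbit or an exit through a coordinate hyperplane; (4) translate ``global orbit'' $\Leftrightarrow$ $(p,q)\in\mathcal{G}$ and ``finite-time exit through $\{u=0\}$ or $\{v=0\}$'' $\Leftrightarrow$ $(p,q)\in\C$; (5) use non-crossing of orbits plus uniqueness to conclude that the presence of a global orbit precludes a ball solution, i.e.\ $\C\cap\mathcal{G}=\emptyset$; (6) assemble: $\{pq>1\}=\C\sqcup\mathcal{G}$. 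The main obstacle I anticipate is Step (3)–(5): rigorously ruling out that the orbit from $O$ can fail to be global \emph{and} fail to reach a coordinate hyperplane in finite radius (a ``limit-cycle-like'' or slowly-decaying-without-vanishing behavior), and then leveraging the flow's uniqueness to forbid a second confined orbit when a confined orbit already exists --- this is where the fully nonlinear structure (the piecewise-linear $M_\pm,m_\pm$) makes the phase portrait less transparent than in the Laplacian case and the careful monotonicity estimates for $u,v$ along trajectories become essential.
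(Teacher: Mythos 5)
Your proposal correctly identifies the two ingredients that the paper does use --- the translation of regular solutions into trajectories of the quadratic system and the uniqueness statement of Theorem \ref{teo uniqueness} --- but the mechanism you propose for the crucial step (5), and implicitly for exhaustiveness, does not work. First, a structural point: regular solutions correspond to trajectories exiting the equilibrium $N_0=(0,0,\lambda N,\lambda N)$ (not $O$), whose unstable manifold is \emph{two}-dimensional (Proposition \ref{local study stationary points}(2)); there is a genuine one-parameter family of distinct regular orbits, not ``the orbit from $O$.'' Theorem \ref{teo uniqueness}(i) only tells you that \emph{at most one} member of this family stays in the bounded box forever. It does not by itself say anything about whether, among the remaining orbits (all of which blow up), there is one for which $X$ and $Y$ blow up \emph{simultaneously} --- which is exactly what a Dirichlet ball solution is (Remark \ref{Remark 2F}). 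Your appeal to ``non-crossing of orbits pushing every nearby orbit toward the boundary of the cone'' is a planar phase-portrait intuition; in the four-dimensional phase space two disjoint orbits separate nothing, so the presence of a confined orbit places no constraint on whether some other orbit exits through $\{u=0\}\cap\{v=0\}$ jointly. Likewise, your converse direction is incomplete: an orbit that exits the cone generically does so with only one of $u,v$ vanishing first, which yields neither a ball solution nor a ground state, so ``the distinguished orbit is not global $\Rightarrow(p,q)\in\C$'' does not follow from a trichotomy for a single orbit.

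The paper's actual argument is a topological/parity argument on the two-dimensional parameter set $\mathcal{U}(N_0)$ of the local unstable manifold. One decomposes $\mathcal{U}(N_0)=\mathcal{N}_1\cup\mathcal{N}_2\cup\mathcal{D}\cup\mathcal{G}_{N_0}$ according to whether $X$ blows up first, $Y$ blows up first, both blow up together, or the orbit stays in the box. Proposition \ref{Th1.1 lemma} shows $\mathcal{N}_1,\mathcal{N}_2$ are open, nonempty, contain neighborhoods of the $X$-- and $Y$--axes respectively, and that $\mathcal{D}\cup\mathcal{G}_{N_0}=\overline{\mathcal{N}}_1\cap\overline{\mathcal{N}}_2\neq\emptyset$ by connectedness --- this is what gives exhaustiveness, $\C\cup\mathcal{G}=\{pq>1\}$. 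Disjointness then follows by counting connected components of $\overline{\mathcal{N}}_1\cap\overline{\mathcal{N}}_2$: Theorem \ref{teo uniqueness} (applied separately to ball solutions and to ground states, via the one-to-one correspondence between solutions up to scaling and trajectories) shows each of $\mathcal{D}$ and $\mathcal{G}_{N_0}$ is the projection of at most one trajectory, so there are at most two components; but since $\mathcal{N}_1$ is anchored at $\{Y=0\}$ and $\mathcal{N}_2$ at $\{X=0\}$, the number of components must be odd; hence it is exactly one, and exactly one of $\mathcal{D}$, $\mathcal{G}_{N_0}$ is nonempty. This parity step is the missing idea in your proposal, and without it (or a substitute of comparable strength) the claimed disjointness $\C\cap\mathcal{G}=\emptyset$ is not established.
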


Our strategy for obtaining Theorem \ref{teo uniqueness trajectory} is to show how uniqueness of regular solutions presented in Theorem \ref{teo uniqueness} either in $\rN$ or in $B_R$, up to rescaling, is translated into uniqueness of trajectories for the respective dynamical system.  This is the heart of the paper and, up to our knowledge, it is the first time an exclusion result of this nature is proven in this context.
\vspace{0.05cm}

In view of Theorem \ref{teo uniqueness trajectory}, we conjecture that $\partial\C$ turns out to be a critical curve on the $(p,q)$ plane, being the threshold between existence and nonexistence of regular solutions in $\rN$. 
As a byproduct, we also conjecture that this produces a critical curve for existence and nonexistence of fast decaying exterior domain solutions.
We point out that $\partial\C \subset \mathcal{G}$ since $\C$ is open (see Proposition \ref{C is open}).
It is natural to expect, on this critical curve, the behavior of the solutions at infinity being subject to a fast decaying profile.

We also give a result on exterior domain and singular solutions. Up to our knowledge, they are novelties even for standard Lane-Emden systems driven by the Laplacian operator.

Particularly for the next theorem, we consider the (nonempty) regions $\mathcal{R}_D^\pm \subset \mathcal{R}_d^\pm$ given by
\begin{center}
\;\,$\mathcal{R}_D^+:=\{\, (p,q)\in \real^2: \; \frac{1}{p+1}+\frac{1}{q+1}>   \frac{\Lambda}{\lambda} \frac{2N-\tilde N_+-2}{\tilde N_+}\,, \;\;pq>1\,\}$ \;\; for $\M^+$, \smallskip

$\mathcal{R}_D^-:=\{\, (p,q)\in \real^2: \; \frac{1}{p+1}+\frac{1}{q+1}>   \frac{\Lambda}{\lambda} \frac{2\tilde N_--N-2}{\tilde N_-}\,, \;\;pq>1\,\}$ \;\; for $\M^-$.
\end{center}
\begin{theorem}[Exterior domain Neumann]\label{Th exterior} Let $\lambda\le  \Lambda$ and $R>0$.
Regarding solutions of \eqref{LE} defined in the exterior domain $\rN\setminus B_R$, with $u,v>0$ on $\partial B_R$, and $\partial_\nu u, \partial_\nu v=0$ on $\partial B_R$, it holds:
\begin{enumerate}[(i)]
\item if $ (p,q)\in \overline{\mathcal{R}_D^\pm}$ then there is no radial positive exterior domain solution of \eqref{LE};
			
\item if $(p,q)\in \mathcal{R}_u^\pm$ there exist exterior domain fast decaying solutions of \eqref{LE}.
\end{enumerate}
Moreover, if $\lambda=\Lambda$ then the hyperbola $\sH$ in \eqref{sH} gives us the threshold for existence and nonexistence of exterior domain solutions with Neumann boundary condition.
\end{theorem}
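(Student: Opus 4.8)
\textbf{Proof proposal for Theorem \ref{Th exterior}.}

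The plan is to reduce everything to the dynamical system machinery already set up in the paper, exploiting the decoupling between $N$ and $\tilde N_\pm$ that the piecewise energies provide. For part (i), I would first observe that a radial exterior domain solution of \eqref{LE} with the Neumann condition $\partial_\nu u=\partial_\nu v=0$ on $\partial B_R$ is, after the logarithmic change of variables $t=\log r$ used throughout, an orbit of the associated quadratic system that starts (at $t=\log R$) on the ``Neumann slice'' where the variables corresponding to $r u'$ and $r v'$ vanish, and is defined for all $t\to+\infty$. The key is that when $r>R$ the radial profiles are monotone decreasing (one checks this from the sign of the right-hand sides in \eqref{shooting}, since $u,v>0$ forces $u'',v''$ to have the appropriate sign as soon as $u',v'$ start negative; the Neumann condition gives the initial push), so the relevant branch of the Pucci operator is selected and the solution satisfies a \emph{definite} inequality of the form $\mathcal{M}^\pm(D^2w)+(\cdot)^{(\cdot)}=0$ which, on monotone decreasing radial functions, behaves like the linear operator with dimensional parameter $\tilde N_\pm$. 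Then I would introduce the piecewise energy functional tailored to the four hyperbolas (the same one used to prove Theorem \ref{Th regular}(i)), evaluate it at $t=\log R$ using the Neumann condition, and show it has a sign on $\overline{\mathcal{R}_D^\pm}$ that is incompatible with the orbit converging to a stationary point as $t\to\infty$; this Pohozaev–type identity combined with the region constraint defining $\mathcal{R}_D^\pm$ (which is precisely engineered so the boundary terms at $R$ and at $\infty$ cannot balance) yields the nonexistence. For part (ii), in $\overline{\mathcal{R}_u^\pm}$ Theorem \ref{Th regular}(ii) already gives a global ground state in $\rN$; such a solution is decreasing for all $r$ and, by the analysis of the stationary points generating fast decaying trajectories, approaches the fast-decay stationary point. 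Restricting this global solution to $\{r\ge R_0\}$ for a suitable $R_0$ where $u'(R_0),v'(R_0)<0$ and then \emph{rescaling via \eqref{eq scaling}} does not immediately give Neumann data, so instead I would run a shooting argument \emph{backwards} from the fast-decay stationary point: parametrize the stable manifold of that point, flow backward in $t$, and use an intermediate value / continuity argument on the parameter to hit the Neumann slice $\{ru'=rv'=0\}$ at some finite $t=\log R$; continuity of the flow in initial data (uniqueness of trajectories, cf.\ Theorem \ref{teo uniqueness trajectory}'s mechanism) plus the fact that near the fast-decay point the orbit is monotone guarantees such a crossing exists for every prescribed $R$ after a further rescaling.

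For the final assertion, when $\lambda=\Lambda$ we have $\tilde N_\pm=N$, so both $\mathcal{R}_D^+$ and $\mathcal{R}_D^-$ collapse to $\{\frac{1}{p+1}+\frac{1}{q+1}>\frac{N-2}{N}\}$, which is exactly the region below $\sH$ in \eqref{sH}; and $\overline{\mathcal{R}_u^\pm}$ collapses to the region on or above $\sH$. Thus part (i) says: below $\sH$, no exterior Neumann solution, and part (ii) says: on or above $\sH$, exterior fast decaying Neumann solutions exist. Combined with the fact (from Theorem \ref{Th Lapl known}) that $\sH$ is already the threshold for regular solutions in $\rN$, this establishes $\sH$ as the sharp threshold for the Neumann exterior problem, with the fast-decay profile of Definition \ref{def fast} being attained exactly on and above it. I would state this by simply noting that no case is left uncovered: $\sH$ itself falls in $\overline{\mathcal{R}_u^\pm}$ by the closure, so existence holds on $\sH$ as well, while strict nonexistence holds strictly below.

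The main obstacle I anticipate is part (ii): unlike the regular-solution existence, where one shoots from the origin and uses the scaling to normalize, here the target (the fast-decay stationary point) is reached at $t=+\infty$ and the Neumann condition must be met at a \emph{finite} time whose value is prescribed. The delicate point is proving that, as one moves along the stable manifold of the fast-decay point and flows backward, the orbit genuinely crosses the codimension-two Neumann slice $\{ru'=rv'=0\}$ rather than leaving the positive cone first or spiraling — this requires a careful local analysis of the sign of $(ru')'$ and $(rv')'$ along such orbits, showing they each vanish exactly once, so that the backward flow has a well-defined ``first Neumann crossing''; this monotonicity is essentially the exterior-domain analogue of the ``change concavity exactly once'' statements (Theorem \ref{Th concavidade Lapl}, Corollary \ref{cor change concav 1x Rd}) and I would expect to borrow those arguments. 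A secondary technical nuisance is handling the two branches of each Pucci operator simultaneously in the piecewise energy for part (i); but since exterior solutions are monotone, only one branch is active for $r>R$, which simplifies the four-hyperbola bookkeeping considerably compared to the regular-solution case.
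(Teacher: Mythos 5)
Your proposal has the right general architecture (dynamical system plus energy), but there are three concrete problems. For part (ii), your starting premise is false: in $\mathcal{R}_u^\pm$ the ground state produced by Theorem \ref{Th regular}(ii) does \emph{not} approach the fast-decay stationary point — ruling out \emph{entire} fast-decaying solutions there is precisely the content of the paper's argument. The paper decomposes the local stable manifold of the fast-decay point ($A_0$, $P_0$ or $Q_0$) into $\mathcal{A}_1$, $\mathcal{A}_2$ (where $u'$, resp.\ $v'$, vanishes first), $\varSigma$ (both vanish together, i.e.\ a Neumann exterior solution) and $\mathcal{G}_{\frak P}$ (entire fast-decaying solutions), and Proposition \ref{Th1.1 lemma A0} gives $\varSigma\cup\mathcal{G}_{\frak P}\neq\emptyset$ by connectedness — this is the rigorous version of your backward-shooting/intermediate-value idea. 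But the argument is not finished there: one must exclude the alternative $\mathcal{G}_{\frak P}$, which the paper does by showing that the energy with $\sigma=N$ is nonincreasing along such a trajectory, vanishes at the fast-decay point as $t\to+\infty$, and tends to $0$ as $r\to 0$ because $\alpha+\beta<N-2$ in $\mathcal{R}_u^\pm$, forcing the trajectory to be trivial. Your proposal omits this exclusion entirely, so your argument only yields ``either a Neumann exterior solution or an entire fast-decaying solution exists.''

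For part (i), reusing the piecewise four-region energy of Theorem \ref{Th regular} would fail: its monotonicity in the mixed concavity regions rests on the bounds $Z,W<\lambda N$ from Proposition \ref{AP bounds}(ii), which hold for trajectories defined for all backward time but not for an exterior trajectory, whose $Z,W$ blow up at the Neumann time $t=\ln R$. The paper instead builds the single-formula energy $\mathbb E$ in \eqref{energia 2} with coefficients $\frak A,\frak B$ tuned to the defining inequality of $\mathcal{R}_D^\pm$ — this is exactly why the nonexistence region is the smaller $\mathcal{R}_D^\pm\subset\mathcal{R}_d^\pm$ rather than all of $\mathcal{R}_d^\pm$. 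Your remark that ``only one branch of the Pucci operator is active for $r>R$'' confuses monotonicity with concavity: $u',v'<0$ fixes the first-order branch, but $u''$ and $v''$ can still change sign, so all four concavity regions can occur. Finally, you have the threshold on $\sH$ backwards: when $\lambda=\Lambda$, $\sH$ lies in $\overline{\mathcal{R}_D^\pm}$, so part (i) gives \emph{non}existence on $\sH$, while part (ii) is stated for the open region $\mathcal{R}_u^\pm$, strictly above $\sH$; the paper confirms that on $\sH$ the fast-decaying trajectory is an entire solution (cf.\ Theorem \ref{Th Lapl known}), so no exterior Neumann solution exists there. Your claim that existence holds on $\sH$ would make the theorem contradict itself.
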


\begin{theorem}[Singular solutions]\label{Th singular} Let $\lambda\le  \Lambda$.
	Regarding singular solutions of \eqref{LE}--\eqref{H singular}, one has:
	\begin{enumerate}[(i)]
		\item if $(p,q)\in\mathcal{R}_d^\pm$ there exists a singular fast decaying solution of \eqref{LE}, \eqref{H singular} in $\rN$;
		
		\item if $(p,q)\in \overline{\mathcal{R}_u^\pm}$ then there is no singular fast decaying solution of \eqref{LE}, \eqref{H singular} in $\rN$.
	\end{enumerate}
Furthermore,  if $\lambda=\Lambda$ then the hyperbola $\sH$ in \eqref{sH} divides existence and nonexistence of fast decaying singular solutions in $\rN$.
\end{theorem}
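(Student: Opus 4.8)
The plan is to deduce Theorem~\ref{Th singular} from the phase portrait of the autonomous quadratic system associated to radial solutions of \eqref{LE} --- the same system underlying Theorems~\ref{teo uniqueness} and \ref{Th regular}. After the Emden--Fowler change of variable $t=\ln r$, a radial solution of \eqref{LE} becomes a trajectory of this system, and the singular condition \eqref{H singular} picks out exactly the trajectories that emanate, as $t\to-\infty$, from the equilibrium $P_0$ encoding the admissible blow-up profiles at the origin (the scale-invariant one $u\sim A\,r^{-\alpha}$, $v\sim B\,r^{-\beta}$ when $\alpha,\beta<\tilde N_\pm-2$, an $\M^\pm$-harmonic one otherwise). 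Dually, by Definition~\ref{def fast}, a solution is fast decaying precisely when its trajectory converges, as $t\to+\infty$, to the equilibrium $P_\infty$ associated with the decay $r^{2-\tilde N_\pm}$. Hence a singular fast decaying solution of \eqref{LE}--\eqref{H singular} in $\rN$ is nothing but a heteroclinic orbit from $P_0$ to $P_\infty$, and the whole statement amounts to deciding, in terms of $(p,q)$, whether such a connection occurs.

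For part (i) I would first linearize at $P_0$ and verify that for $(p,q)\in\mathcal{R}_d^\pm$ the unstable manifold of $P_0$ is nontrivial and leaves $P_0$ heading into the invariant region $\K$ delimited by the four hyperbolas that define the piecewise energy $\mathcal E$ built for the proof of Theorem~\ref{Th regular}. One then uses $\mathcal E$ as a strict Lyapunov function on $\K$: the inequalities characterising $\mathcal{R}_d^\pm$ are exactly what forces $\mathcal E$ to be strictly monotone along orbits inside $\K$ with a fixed sign, which prevents the unstable orbit of $P_0$ from returning to $P_0$, from escaping towards the equilibria at infinity, and from reaching a coordinate hyperplane (which would mean $u$ or $v$ vanishing at a finite radius). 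Therefore the $\omega$-limit of that orbit is an equilibrium of $\overline{\K}$, and ruling out every other equilibrium on positivity and energy grounds leaves only $P_\infty$. This yields the desired singular fast decaying solution; equivalently, one may phrase the argument as a shooting along the unstable manifold of $P_0$.

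For part (ii), assume for contradiction that a connecting orbit from $P_0$ to $P_\infty$ exists with $(p,q)\in\overline{\mathcal{R}_u^\pm}$. Evaluating $\mathcal E$ at the two equilibria gives explicit numbers $\mathcal E(P_0)$, $\mathcal E(P_\infty)$ depending only on $(p,q,N,\lambda,\Lambda)$, and the defining inequalities of $\mathcal{R}_u^\pm$ make their ordering incompatible with the fixed direction of monotonicity of $\mathcal E$ along orbits in the region swept by the connection --- a Pohozaev--Rellich type obstruction. On the boundary $\partial\mathcal{R}_u^\pm$ the energy identity degenerates into a conservation law, which would force such an orbit to be stationary, hence it is ruled out as well. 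For the last assertion, when $\lambda=\Lambda$ one has $\tilde N_\pm=N$, so $\mathcal{R}_d^\pm$ reduces to the open region strictly below $\sH$ while $\overline{\mathcal{R}_u^\pm}$ becomes the closed region on or above $\sH$; combining (i) and (ii) then exhibits $\sH$ as the exact threshold, consistently with Theorem~\ref{Th Lapl known}.

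The step I expect to be the main obstacle is the global control of the unstable orbit of $P_0$ in part (i): the local exit direction is easy, but one must certify that the orbit stays in $\K$, avoids $\{u=0\}$ and $\{v=0\}$, and does not run off to infinity before converging, and this is exactly where the piecewise nature of $\mathcal E$, ruled by four hyperbolas rather than the single one of the Laplacian case, makes the monotonicity bookkeeping delicate. The companion difficulty is the case analysis, across the several regimes cut out by those hyperbolas, showing that $\dot{\mathcal E}$ keeps a constant sign on $\K$ exactly when $(p,q)\in\mathcal{R}_d^\pm$ (and the reversed sign when $(p,q)\in\overline{\mathcal{R}_u^\pm}$), together with treating the degenerate boundary cases by a continuity and limiting argument.
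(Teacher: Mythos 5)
Your reduction of the statement to the existence or nonexistence of a heteroclinic orbit joining a blow-up equilibrium to a fast-decay equilibrium is not justified, and this is where both halves of the argument break. (A notational warning first: what you call $P_0$ is the paper's $M_0=(\alpha,\beta,Z_0,W_0)$; the paper's $P_0$ is one of the \emph{fast-decay} points.) A singular solution is only required to satisfy \eqref{H singular}; by Proposition \ref{AP bounds} its trajectory is a global orbit confined to the box $\mathcal{B}_\pm$, but nothing forces its $\alpha$-limit to be $M_0$ --- above $\tH_\pm$ the linearization at $M_0$ has complex eigenvalues (Proposition \ref{local study stationary points}(1)(iii)), so backward oscillation is a live possibility. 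Consequently your part (ii), which compares the values of the energy at the two named equilibria, excludes only that particular connection and not all singular fast decaying solutions. The paper's nonexistence argument needs no information on the backward limit: with $\sigma=N$ the energy is nonincreasing and vanishes at the fast-decay point, while the a priori box bounds give $|E(r)|\le Cr^{N-2-\alpha-\beta}\to 0$ as $r\to 0$ because $\overline{\mathcal{R}_u^\pm}$ lies above $\sH$; hence $E\equiv 0$, a contradiction. (Your claim that on $\partial\mathcal{R}_u^\pm$ the identity ``degenerates into a conservation law'' is also inaccurate: only one of the defining inequalities becomes an equality there, and $\dot E$ retains several signed terms.)

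For part (i) the gap is precisely the global control you flag as the main obstacle, and the energy cannot supply it. In $\mathcal{R}_d^\pm$ the energy is \emph{nondecreasing} and tends to $-\infty$ as $t\to-\infty$ along an orbit leaving $M_0$ (since $\mathcal{R}_d^\pm$ lies below $\tH_\pm$, cf.\ \eqref{energia M0}), so monotonicity is perfectly compatible with the orbit reaching positive energy and blowing up in finite forward time --- which is exactly what happens for singular solutions vanishing at a finite radius (Remark \ref{Remark 2F}). A Lyapunov function therefore cannot select, inside $\mathcal{W}_u(M_0)$, an orbit converging to the fast-decay point rather than escaping; a topological selection argument is indispensable. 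The paper shoots in the opposite direction: it parametrizes the stable manifold of the fast-decay point ($A_0$, $P_0$ or $Q_0$), proves the dichotomy $\overline{\mathcal{A}}_1\cap\overline{\mathcal{A}}_2=\varSigma\cup\mathcal{G}_{\frak P}\neq\emptyset$ (Proposition \ref{Th1.1 lemma A0}), uses the energy only to kill the alternative $\varSigma$ (an exterior Neumann solution), and concludes that the surviving global orbit cannot be regular in $\mathcal{R}_d^\pm$ by Theorem \ref{Th regular}(i), hence is singular. If you insist on forward shooting from $M_0$ you would have to reprove an analogue of that dichotomy on $\mathcal{W}_u(M_0)$, which your proposal does not do.
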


Finally, for the standard Lane-Emden system, we obtain the following Liouville type result on exterior domain solutions with Dirichlet boundary condition.
\begin{theorem}[Liouville exterior domain Dirichlet]\label{Th exterior Dir} Let $\lambda= \Lambda$.
	If $\alpha+\beta \ge N-2$
	then there is no radial positive exterior domain solution of \eqref{LE} for any $R>0$ with Dirichlet boundary condition \eqref{H Dirichlet}. 
\end{theorem}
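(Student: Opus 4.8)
The plan is to reduce the Dirichlet exterior-domain problem to the already-established regular-solution dichotomy by a reflection-type argument at the boundary sphere, exploiting the radial ODE structure. Suppose, for contradiction, that for some $R>0$ there is a positive radial solution $(u,v)$ of \eqref{LE}, \eqref{H Dirichlet} on $\rN\setminus B_R$ with $u(R)=v(R)=0$. Since $\lambda=\Lambda$ the operators reduce to (a multiple of) the Laplacian, so $(u,v)$ solves the classical Lane-Emden system in the exterior domain. The first step is to record the sign information at $r=R$: because $u,v>0$ for $r>R$ and vanish at $r=R$, we have $u'(R)\ge 0$ and $v'(R)\ge 0$, and in fact $u'(R)>0$, $v'(R)>0$ by the Hopf lemma applied to each (superharmonic) component. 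The second step is to analyze the behavior as $r\to\infty$: a positive radial supersolution of $-\Delta w \ge 0$ in an exterior domain is bounded below, and integrating the equations one sees each of $u,v$ is monotone for large $r$ with a finite or infinite limit; the superlinearity $pq>1$ together with $\alpha+\beta\ge N-2$ forces, via the standard Pohozaev/energy bookkeeping already developed for Theorem \ref{Th Lapl known}, that the solution must have fast decay or be bounded — in any case $ru'(r)\to 0$ and $rv'(r)\to 0$ along a sequence is excluded unless the solution is trivial.

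The core of the argument is the energy/Pohozaev identity for the radial system. Introduce the Rellich–Pohozaev functional
\begin{equation}\label{eq Poh ext}
E(r) = r^{N}\Bigl( \frac{u'v'}{?} \Bigr) + \ldots,
\end{equation}
— more precisely, use the standard Lane-Emden Pohozaev functional $P(r)$ whose derivative is a multiple of $(\alpha+\beta-(N-2))\,(u^{q+1}r^{N-1}/(q+1)+v^{p+1}r^{N-1}/(p+1))$ plus boundary-type terms $r^{N-1}(\text{bilinear in }u,v,u',v')$. Evaluate $P$ between $r=R$ and $r\to\infty$. At $r=R$ the Dirichlet condition $u(R)=v(R)=0$ kills the nonlinear terms and leaves a strictly signed boundary contribution proportional to $R^{N}u'(R)v'(R)<0$ (or $>0$, depending on orientation), using $u'(R),v'(R)>0$. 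At $r=\infty$, one shows the boundary terms vanish: this is where the hypothesis $\alpha+\beta\ge N-2$ and the decay estimates from the associated dynamical system (Section on the quadratic system) are needed to guarantee $r^{N}u'v'\to 0$ and similarly for the mixed $r^{N-1}(uv'+vu')$ terms, and that the integral of the bulk term $(\alpha+\beta-(N-2))\int_R^\infty(\cdots)$ has a definite sign (it is $\ge 0$). Combining: the bulk integral is $\ge 0$, the limit term is $0$, hence the $r=R$ term must be $\le 0$; but the explicit computation shows it has the \emph{opposite} strict sign, a contradiction. When $\alpha+\beta=N-2$ exactly, the bulk term vanishes and one still gets the contradiction purely from the nonzero boundary term at $R$, after checking (via the fast-decay classification on $\sH$, i.e.\ the $\tilde N=N$ case of Definition \ref{def fast} and the analysis behind Theorem \ref{Th exterior}) that the solution must have the fast decay making the $\infty$-terms vanish.

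The step I expect to be the main obstacle is controlling the boundary terms at infinity: one must rule out slowly-decaying exterior solutions for which $r^{N}u'v'$ does not tend to $0$. For a single equation this follows from the Kelvin transform plus the regular-solution Liouville theorem, but for the system the Kelvin transform does not straightforwardly preserve the Lane-Emden coupling unless $(p,q)$ sits on a specific curve, so instead I would argue directly on the radial ODE: show that any positive exterior radial solution has each component eventually monotone, deduce $u,v$ have limits $\ell_u,\ell_v\in[0,\infty]$, rule out $+\infty$ by superharmonicity, rule out a positive finite limit by substituting back into the equation (a positive constant is not a subsolution of $-\Delta w = (\text{positive})$ in an exterior domain while staying bounded, by comparison with the fundamental solution), and finally upgrade $\ell_u=\ell_v=0$ to the precise decay rate $r^{2-N}$ by a comparison argument with the Green's function — which then makes $r^{N}u'(r)v'(r)\to$ const$\cdot r^{2-N}\to 0$. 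A secondary technical point is justifying the integration by parts in $P(r)$ up to $r=\infty$, which requires these same decay bounds; both are handled by the dynamical-system estimates already in place for Theorems \ref{Th regular}–\ref{Th exterior}. Once the boundary terms are tamed, the Pohozaev sign contradiction is immediate. $\blacksquare$
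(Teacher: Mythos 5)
Your overall skeleton matches the paper's: a Pohozaev-type radial energy, monotone under the hypothesis $\alpha+\beta\ge N-2$, strictly positive at $r=R$ because the Dirichlet condition kills everything except $R^{N}u'(R)v'(R)>0$ (Hopf), and a sign contradiction at infinity. The paper implements this with the functional \eqref{energia 2}, choosing $\frak A=\frac{N}{p+1}-\frac{\delta}{2}$, $\frak B=\frac{N}{q+1}-\frac{\delta}{2}$ where $\frac{N}{p+1}+\frac{N}{q+1}=N-2+\delta$, $\delta\ge0$, so that $\mathbb E'(r)=\frac{\delta r^{N-1}}{2\Lambda}(v^{p+1}+u^{q+1})\ge 0$ exactly.

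The genuine gap is in your treatment of the terms at infinity. You propose to prove that any positive exterior radial solution decays like $r^{2-N}$ by comparison with the Green's function and then conclude that $r^{N}u'v'\to 0$. This cannot work: superharmonicity and comparison with the fundamental solution give only the \emph{lower} bound $u\gtrsim r^{2-N}$, never an upper bound, and a priori the solution may be slow decaying, i.e.\ its trajectory may converge to the interior stationary point $M_0$, with $u\sim r^{-\alpha}$, $v\sim r^{-\beta}$ and possibly $\alpha<N-2$. Excluding slow decay is not a preliminary step available before the energy argument — it is part of what the energy argument must deliver. The paper resolves this through the dynamical system: by Lemma \ref{lemma X,Y extremum} the trajectory cannot oscillate, so it converges to one of $A_0$, $P_0$, $Q_0$, $M_0$, and the limiting energy at each of these points is $\le 0$ when $\alpha+\beta\ge N-2$ (for $M_0$ it equals $-\frac{4}{pq-1}e^{t(N-2-\alpha-\beta)}$, which tends to $0$ above $\sH$ and to the negative constant $-\frac{4}{pq-1}$ on $\sH$; see \eqref{energia M0} and \eqref{energia P0,Q0}). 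This contradicts $\mathbb E\ge\mathbb E(R)>0$ without ever needing a pointwise $r^{2-N}$ bound. In short: replace your "upgrade to fast decay" step by the classification of $\omega$-limits of the trajectory together with the computation of the energy at each stationary point, or your argument does not close. A secondary issue: your displayed functional \eqref{eq Poh ext} is left as a placeholder, so the crucial claim that the bulk term has sign $(\alpha+\beta-(N-2))\cdot(\text{nonnegative})$ and that the $r=R$ boundary term has the opposite strict sign is asserted rather than verified; the verification is exactly the choice of the coefficients $\frak A,\frak B$ above.
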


We highlight that the extension of our results to Hénon weights $|x|^a, |x|^b$ with $a,b> -1$ could be treated as in \cite{MNPscalar}, in the spirit of \cite{BV}. We prefer to skip it in this work to keep the presentation simpler, by focusing in what is new for the system.

\smallskip

The paper is organized as follows. In Section \ref{Preliminaries} we recall some preliminary facts on radial solutions of Pucci's operators and study the associated quadratic system. Section \ref{section global} is dedicated to the global study of the dynamical system and we prove Theorems  \ref{teo uniqueness} and \ref{teo uniqueness trajectory}. Setion~\ref{section.1 energy} is devoted to energy and qualitative analyses, and the proof of the remaining theorems are provided.

\section{The dynamical system}\label{Preliminaries}

In this section we define some new variables which allow us to transform the radial fully nonlinear equations into a quadratic dynamical system.

\subsection{The second order PDE system}

We start by recalling that the Pucci's extremal operators $\mathcal{M}^\pm_{\lambda,\Lambda}$, for $0<\lambda\leq \Lambda$, are defined as
$$
\textstyle{\mathcal{M}^+_{\lambda,\Lambda}(X):=\sup_{\lambda I\leq A\leq \Lambda I} \mathrm{tr} (AX)\,,\quad \mathcal{M}^-_{\lambda,\Lambda}(X):=\inf_{\lambda I\leq A\leq \Lambda I} \mathrm{tr} (AX),}
$$
where $A,X$ are $N\times N$ symmetric matrices, and $I$ is the identity matrix.
Equivalently, if we denote by $\{e_i\}_{1\leq i \leq N}$ the eigenvalues of $X$, we can define the Pucci's operators as
\begin{align}\label{def Pucci}
\textstyle{\textrm{$\mathcal{M}_{\lambda,\Lambda}^+(X)=\Lambda \sum_{e_i>0} e_i +\lambda \sum_{e_i<0} e_i$, \;\;\; $\mathcal{M}_{\lambda,\Lambda}^-(X)=\lambda \sum_{e_i>0} e_i +\Lambda \sum_{e_i<0} e_i$}. }
\end{align}
From now on we will drop writing the parameters $\lambda,\Lambda$ in the notations for the Pucci's operators.

When $u$ is a radial function, to simplify notation we set $u(|x|)=u (r)$ for $r=|x|$. If in addition $u$ is $C^2$, the eigenvalues of the Hessian matrix $D^2 u$ are $u^{\prime\prime}$ which is simple, and $\frac{u^\prime (r)}{r}$ with multiplicity $N-1$.  

The Lane-Emden system \eqref{LE} for $\mathcal{M}^+$ is written in radial coordinates as
\begin{align}\label{P radial m}\tag{$P_+$}
\left\{
\begin{array}{l}
u^{\prime\prime}\;=\; M_+(-r^{-1}(N-1)\, m_+(u^\prime)- v^p ), \\
v^{\prime\prime}\;=\; M_+(-r^{-1}(N-1)\, m_+(v^\prime)- u^q ) , \quad u,\, v> 0  ,
\end{array}
\right.
\end{align}
while for $\M^-$ one has
\begin{align}\label{P radial m-}\tag{$P_-$}
\left\{
\begin{array}{l}
u^{\prime\prime}\;=\; M_-(-r^{-1}(N-1)\, m_-(u^\prime)- v^p ), \\
v^{\prime\prime}\;=\; M_-(-r^{-1}(N-1)\, m_-(v^\prime)- u^q ) , \quad u,\, v> 0  ,
\end{array}
\right.
\end{align}
which are understood in the maximal interval where $u,v$ are both positive.

We stress that by \textit{ regular solution} of \eqref{P radial m} or \eqref{P radial m-} we mean a solution pair $(u,v)$ which is positively defined at $r=0$, and twice differentiable up to $0$.

Let us consider the following functions which determine the sign of $-u^{\prime\prime}, -v^{\prime\prime}$,  
\begin{align}\label{def H1, H2}
H_1(r)= r^{-1}  (N-1) m_\pm(u^\prime) +v^p , \qquad H_2(r)= r^{-1} (N-1) m_\pm(v^\prime)+u^q.
\end{align}

\smallskip

Next we show that solutions of \eqref{P radial m} or \eqref{P radial m-} are strictly concave around $r=0$.
\begin{lem}\label{lemma u,v concave at 0}
	Every regular solution pair $(u,v)$ of \eqref{P radial m} or \eqref{P radial m-} satisfies $u^{\prime\prime}(0)< 0$, $v^{\prime\prime}(0)<0$. In particular, $u$ and $v$ are both concave in a neighborhood of $r=0$.
\end{lem}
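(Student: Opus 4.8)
The plan is to exploit the continuity of the solution pair up to $r=0$ together with the Neumann-type condition $u'(0)=v'(0)=0$ coming from regularity, and to read off the sign of $u''(0),v''(0)$ directly from the equations \eqref{P radial m} or \eqref{P radial m-}. The key point is that near $r=0$ the singular term $r^{-1}(N-1)m_\pm(u')$ must be handled carefully, because a priori it is an indeterminate $\tfrac{0}{0}$; so first I would establish that $u'(r)/r$ has a finite limit as $r\to 0$, namely $u''(0)$, and that this limit is captured correctly by the Pucci structure.

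Concretely, the first step is to note that for a regular solution $u\in C^2$ up to $0$ with $u'(0)=0$, one has $m_\pm(u'(r)) = m_\pm\big(u''(0)r+o(r)\big)$, so that $r^{-1}(N-1)m_\pm(u'(r)) \to (N-1)\,m_\pm'(0^{\pm})\,u''(0)$ in the appropriate one-sided sense; since $m_\pm$ is piecewise linear through the origin, $m_\pm(u''(0)r)/r$ equals $\lambda u''(0)$ or $\Lambda u''(0)$ depending on the sign of $u''(0)$, and in either case the combined argument of $M_\pm$ on the right-hand side converges as $r\to 0$. Passing to the limit $r\to 0$ in the first equation of \eqref{P radial m} (resp.\ \eqref{P radial m-}) then gives a closed relation: $u''(0) = M_\pm\big(-(N-1)\,\kappa\, u''(0) - v(0)^p\big)$, where $\kappa\in\{\lambda,\Lambda\}$ is determined by $\mathrm{sign}(u''(0))$, and $v(0)=\eta>0$.

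The second step is to argue that the only value of $u''(0)$ consistent with this relation is negative. Suppose for contradiction $u''(0)\ge 0$. If $u''(0)>0$, then on the right one has $m_+(u'(r))/r \to \Lambda u''(0)>0$, so the argument of $M_+$ is $-(N-1)\Lambda u''(0) - v(0)^p < 0$, hence $M_+$ of it is that quantity divided by $\lambda$, which is strictly negative — contradicting $u''(0)>0$. (The computation for $M_-$ and for the $v$-equation is identical after swapping $\lambda\leftrightarrow\Lambda$ in the relevant branch, and $v(0)=\eta>0$ plays the same role as $\xi>0$.) If $u''(0)=0$, then the limiting argument of $M_\pm$ is $-v(0)^p<0$ (the singular term vanishes), so $u''(0)=M_\pm(-v(0)^p)<0$, again a contradiction. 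Hence $u''(0)<0$, and symmetrically $v''(0)<0$ using $u(0)=\xi>0$. Continuity of $u'',v''$ up to $0$ then yields that $u,v$ are strictly concave on a neighborhood of $r=0$, which is the claim.

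The main obstacle is the rigorous justification that the singular term $r^{-1}(N-1)m_\pm(u'(r))$ has the claimed limit, i.e.\ that one may genuinely pass to the limit in the nonlinear, non-differentiable composition $m_\pm(u'(r))/r$; this requires knowing a priori that $u'(r) = u''(0)r + o(r)$ (Taylor expansion at $0$, valid since $u\in C^2$ up to $0$ with $u'(0)=0$) and then observing that $m_\pm$ is positively homogeneous of degree one, so $m_\pm(u''(0)r + o(r))/r \to m_\pm(u''(0))\cdot(\text{sign handling})$. Once this limit is in hand, the sign analysis is a short case check. A minor additional point to record is that the argument does not need to assume the sign of $u''(0)$ in advance: one runs the dichotomy on $\mathrm{sign}(u''(0))$ and finds every non-negative case is contradictory, so the negative case is forced.
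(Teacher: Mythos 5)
Your proposal is correct and takes essentially the same route as the paper: the paper likewise passes to the limit $r\to 0^+$ in the radial equation, using that $r^{-1}m_\pm(u'(r))=m_\pm\bigl(\tfrac{u'(r)-u'(0)}{r}\bigr)\to m_\pm(u''(0))\ge 0$ under the contradiction hypothesis $u''(0)\ge 0$, so that the argument of $M_\pm$ is at most $-v(0)^p<0$ and hence $u''(0)=M_\pm(-H_1(0))<0$, a contradiction. Your explicit case split on the sign of $u''(0)$ and the homogeneity remark are just a more detailed rendering of the same limit computation.
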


\begin{proof}
	Suppose by contradiction that $u^{\prime\prime} (0)\geq 0$. Then note that 
	$$ \textstyle
H_1(0)=\lim_{r\rightarrow 0^+} H_1(r)
	=\lim_{r\rightarrow 0^+} \left\{ (N-1)\, m_\pm\left(\frac{u^\prime (r) -u^\prime (0) }{r}\right)+v^p(r) \right\}\geq v^p(0)>0,
	$$
	since $m_\pm$ are Lipschitz continuous. Now the continuity of $M_\pm$ yields
	$u^{\prime\prime}(0)=\lim_{r\rightarrow 0^+} u^{\prime\prime}(r)=  M_\pm(-H_1(0))<0$, a contradiction. Analogously one shows that $v^{\prime\prime}(0)<0$.
\end{proof}

\begin{lem}\label{lemma u,v decreasing}
If $(u,v)$ is a regular or singular solution pair of \eqref{P radial m} or \eqref{P radial m-} (together with \eqref{H singular} in the singular case) then $u^\prime, v^\prime <0$ in $(0,+\infty)$ as long as both $u,v$ remain positive. 
\end{lem}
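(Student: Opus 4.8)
The plan is to prove monotonicity by a continuity/connectedness argument starting from the behavior near the relevant endpoint. Near $r=0$, Lemma~\ref{lemma u,v concave at 0} gives $u''(0)<0$, $v''(0)<0$ together with $u'(0)=v'(0)=0$ in the regular case, so $u'<0$ and $v'<0$ on a maximal interval $(0,r_1)$; in the singular case one argues similarly using \eqref{H singular}, observing that if $u$ comes down from $+\infty$ then $u'<0$ initially, and that $H_1(r)>0$ for small $r$ forces $u''<0$ there as well. So define $r_*$ as the supremum of the $r>0$ such that $u'<0$ and $v'<0$ on $(0,r)$ (within the interval where $u,v$ stay positive), and suppose for contradiction $r_*<+\infty$ (with $u,v$ still positive at $r_*$). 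By maximality, at least one of $u'(r_*)=0$ or $v'(r_*)=0$ holds; say $u'(r_*)=0$.

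The key step is then to derive a contradiction at such a first zero of $u'$. At $r=r_*$ we have $m_\pm(u'(r_*))=0$, hence by \eqref{def H1, H2} $H_1(r_*)=v^p(r_*)>0$, so the equation gives $u''(r_*)=M_\pm(-H_1(r_*))<0$. But $u'(r)<0$ for $r<r_*$ and $u'(r_*)=0$ forces $u''(r_*)\ge 0$, a contradiction; and if $u'(r_*)=0$ is not the first such point we instead use that $u'$ must be nonincreasing through $r_*$ from the left, i.e. $u'$ attains a local maximum value $0$ at $r_*$ from the left, again incompatible with $u''(r_*)<0$. The symmetric argument rules out $v'(r_*)=0$. (One should be slightly careful that the equation in \eqref{P radial m}/\eqref{P radial m-} is only understood where $u,v>0$; since we assumed $u,v$ positive at $r_*$, continuity keeps them positive on a neighborhood, so the ODE is valid at $r_*$.)

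The main obstacle I anticipate is handling the singular case rigorously near $r=0$: one does not have $u'(0)=0$, and $u'$ need not have a limit as $r\to 0$, so the phrase ``$u'<0$ near $0$'' needs justification. The cleanest route is to first show that $u$ (resp. $v$) is strictly decreasing on some initial interval purely from \eqref{H singular}: since $u(r)\to+\infty$ as $r\to0$ while $u(r_0)$ is finite for any fixed small $r_0$, there is a sequence $r_n\to 0$ with $u'(r_n)<0$; then the sign-of-$u''$ analysis above (using $H_1>0$, hence $u''<0$, at any point where $u'\le 0$ and $v>0$) propagates $u'<0$ forward, because once $u'<0$ it cannot return to $0$ by the same contradiction. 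This reduces the singular case to the same set-up as the regular one. After that, the continuation argument above closes the proof uniformly. Finally, I would remark that the conclusion holds ``as long as both $u,v$ remain positive,'' i.e. on the maximal interval of positivity, which is exactly the interval on which \eqref{P radial m}/\eqref{P radial m-} is posed, so no further extension issue arises.
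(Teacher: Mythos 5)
Your proposal is correct and follows essentially the same route as the paper: establish $u'<0$, $v'<0$ near the left endpoint (via Lemma \ref{lemma u,v concave at 0} in the regular case, via \eqref{H singular} in the singular case), then rule out a first zero of $u'$ or $v'$ at a positivity point because $m_\pm(u'(r_*))=0$ forces $H_1(r_*)=v^p(r_*)>0$ and hence $u''(r_*)<0$, incompatible with $u'$ reaching $0$ from below. One small caveat: your parenthetical claim that $H_1>0$ ``at any point where $u'\le 0$'' is false when $u'<0$ strictly (there $m_\pm(u')<0$ and $H_1$ need not be positive), but this is harmless since your argument only ever evaluates $H_1$ at points where $u'=0$; your treatment of the singular case near $r=0$ is in fact more careful than the paper's one-line assertion.
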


\begin{proof}
Let us prove only the monotonicity for $u$, since the one for $v$ is analogous.
If $(u,v)$ is regular, by Lemma \ref{lemma u,v concave at 0} we have $u^{\prime\prime}(0)<0$, thus $u^\prime$ is decreasing in a neighborhood of $0$. Since $u^\prime (0)=0$, then $u^\prime (r) <0$ in some interval of positive $r$.
On the other hand, if $(u,v)$ is singular satisfying \eqref{H singular}, then $u$ is necessarily decreasing in a neighborhood of $r=0$. 
Anyway, let $(0,r_0)$ be the maximal interval where $u^\prime <0$. There are two possibilities: either $r_0=\infty$, or $u^\prime (r_0)=0$.

If we had $u^\prime (r_0)=0$ at some point where $u$ is positive, i.e.\ with $u(r_0)>0$, we would obtain $u^{\prime\prime} (r_0)<0$ from the equation. That is, $u^\prime$ is strictly decreasing when passing through the point $r_0$, and hence attains negatives values on the left of $r_0$. This contradicts the definition of $r_0$. Thus $u^\prime$ can never vanish at a positivity point of $u$ if $u$ solves either \eqref{P radial m} or \eqref{P radial m-}. 
\end{proof}

\medskip

In what concerns the initial value problem \eqref{shooting}, we recall once again that a regular solution $(u,v)$ of \eqref{shooting} is twice differentiable up to $0$. 
Since a solution pair of \eqref{shooting} is positive near $0$, by the previous lemmas we obtain that it is a solution for the system driven by the Laplacian operator around $r=0$. Now, by \cite[Lemma 2.1]{SZcpde} for instance, local solutions of \eqref{shooting} exist.
We denote by $u_{\xi,\eta}$, $v_{\xi,\eta}$ the solutions of \eqref{shooting}.
Then we set $R_{\xi,\eta}$, with $R_{\xi,\eta}\leq +\infty$, the radius of the maximal interval $[0,R)$ where $u_{\xi,\eta}$ and $v_{\xi,\eta}$ are both positive.

\vspace{0.03cm}

Hence $(u,v)$ is a solution of $(P_\pm)$ in $[0,R_{\xi,\eta})$. 
Obviously, if $R_{\xi,\eta}=+\infty$ then $(u,v)$ corresponds to a radial positive solution of \eqref{LE} for $\Omega=\rN$.
When $R_{\xi,\eta}<+\infty$ and $u(R_{\xi,\eta})=v(R_{\xi,\eta})=0$ it gives a positive solution of the Dirichlet problem \eqref{LE}, \eqref{H Dirichlet} in the ball $\Omega= B_{R_{\xi,\eta}}$. 

\begin{rmk}\label{rescaling} Given a regular positive solution pair $(u,v)$  in $[0,R_{\xi,\eta})$, with $u=u_{\xi,\eta}$ and $v=v_{\xi,\eta}$, which satisfies \eqref{shooting} for some positive constants $\xi,\eta$, then the rescaled functions  ${u}_\gamma$ and ${v}_\gamma$ as in \eqref{eq scaling}, $\gamma>0$, still give a positive solution pair of the same equation in $[0, \gamma^{-1}{R_{\xi,\eta}})$ with initial values $u_\gamma(0)=\gamma^\alpha \xi$ and $v_\gamma(0)=\gamma^\beta \eta$.
	
	If $u,v$ are defined in the whole interval $[0,+\infty)$, thus there is a family of entire regular solutions obtained via $u_\gamma$, $v_\gamma$ for all $\gamma>0$. If there are no other entire solutions then we say that $(u,v)$ is \textit{unique up to scaling}.
	
	On the other hand, a solution in the ball of radius $R_{\xi,\eta}$ automatically produces a solution for an arbitrary ball, by properly choosing the parameter $\gamma>0$.
\end{rmk}

\subsection{The associated quadratic system}\label{section 2.1}

Let $u,v$ be a positive solution pair of \eqref{P radial m} or \eqref{P radial m-}, then we can define the new functions
\begin{align}\label{X,Y,Z,W sem a}
X(t)=-\frac{ru^\prime}{u}, \quad Y(t)=-\frac{rv^\prime}{v}, \quad Z(t)=-\frac{r v^p}{u^\prime}, \quad W(t)=-\frac{r u^q}{v^\prime}, 
\end{align}
for $t=\mathrm{ln}(r)$, whenever $r>0$ is such that $u,v\neq 0$ and $u^\prime ,v^\prime\neq  0$.
The phase space is contained in $ \real^4$ and,
throughout the paper, we denote its positive cone as 
\begin{center}
	$\K=\{\,(X,Y,Z,W)\in \real^4:\; X,Y,Z,W>0\,\}$.
\end{center}

Since we are studying positive solutions, the points $(X(t),Y(t),Z(t),W(t))$ belong to $\K$ when both $u^\prime,v^\prime <0$.
As a consequence of this monotonicity, the problems \eqref{P radial m} and \eqref{P radial m-} then become in $\K$ as:
\begin{align}\label{P M+ 1Q}
\textrm{for $\M^+$ in $\K$} :\quad
\left\{
\begin{array}{l}
u^{\prime\prime}\;=\; M_+(-\lambda r^{-1}(N-1)\, u^\prime- v^p ), \\
v^{\prime\prime}\;=\; M_+(-\lambda r^{-1}(N-1)\,v^\prime- u^q ) , \quad u,\, v> 0  ;
\end{array}
\right.
\end{align}
\vspace{-0.5cm}
\begin{align}\label{P M- 1Q}
\textrm{\;\;for $\M^-$ in $\K$} :\quad \left\{
\begin{array}{l}
u^{\prime\prime}\;=\; M_-(-\Lambda r^{-1}(N-1)\, u^\prime- v^p ), \\
v^{\prime\prime}\;=\; M_-(-\Lambda r^{-1}(N-1)\,v^\prime- u^q ) , \quad u,\, v> 0  .
\end{array}
\right.
\end{align}

\begin{rmk}\label{Rmk sing reg}
Regular or singular solutions of \eqref{P radial m} and \eqref{P radial m-} enjoy the monotonicity $u^{\prime},v^{\prime}<0$ by Lemma \ref{lemma u,v decreasing}. Thus their study and respective dynamics is restricted to $\overline{\K}.$ 
\end{rmk}

In terms of the functions \eqref{X,Y,Z,W sem a}, we derive the following autonomous dynamical system, corresponding to \eqref{P M+ 1Q} for $\M^+$, where the dot $\dot{ }$ stands for $\frac{\mathrm{d}}{\mathrm{d} t}$,
\begin{align}\label{DS+}
\textrm{$\M^+$ in $\K$ : \quad}
\left\{
\begin{array}{ccl}
\dot{X} &=& \;X \; [\,X+1-M_+ (\lambda(N-1)-Z )\,] \\
\dot{Y} &=& \;Y\; [\,Y+1-M_+ (\lambda(N-1)-W )\,] \\
\dot{Z} &=& \;Z\; [\,1-pY +M_+ (\lambda(N-1)-Z )\,] \\
\dot{W} &=& W\; [\,1-qX +M_+ (\lambda(N-1)-W )\,].
\end{array}
\right.
\end{align}
Similarly one has for $\M^-$, associated to \eqref{P M- 1Q},
\begin{align}\label{DS-}
\textrm{$\M^-$ in $\K$ : \quad}
\left\{
\begin{array}{ccl}
\dot{X} &=& \;X \; [\,X+1-M_- (\Lambda(N-1)-Z )\,] \\
\dot{Y} &=& \;Y\; [\,Y+1-M_- (\Lambda(N-1)-W )\,] \\
\dot{Z} &=& \;Z\; [\,1-pY +M_- (\Lambda(N-1)-Z )\,] \\
\dot{W} &=& W\; [\,1-qX +M_- (\Lambda(N-1)-W )\,].
\end{array}
\right.
\end{align}
We stress that trajectories of \eqref{DS+}, \eqref{DS-} correspond to positive, decreasing solutions of \eqref{P radial m}, \eqref{P radial m-}. 
\smallskip

On the other hand, given a trajectory $\tau=(X,Y,Z,W)$ of \eqref{DS+} or \eqref{DS-} in $\K$, we define
\begin{align}\label{def u via X,Z}
\textstyle{ u(r)=r^{-\alpha} (XZ)^{\frac{1}{pq-1}}(YW)^{\frac{p}{pq-1}}(t) , \;\; v(r)=r^{-\beta} (XZ)^{\frac{q}{pq-1}}(YW)^{\frac{1}{pq-1}}(t)}, \;\;\textrm{ where }\, r=e^t.
\end{align}
Thus we deduce
\begin{align*}
 u^\prime (r)& {\textstyle = -\alpha r^{-\alpha-1} (XZ)^{\frac{1}{pq-1}}(t)(YW)^{\frac{p}{pq-1}}(t) 
+ \frac{r^{-\alpha}}{pq-1} (XZ)^{\frac{1}{pq-1}-1}(t) \,\frac{\dot{X} Z+ X\dot{Z} }{r} }
\\
 &{\textstyle+ \frac{p r^{-\alpha}}{pq-1} (YW)^{\frac{p}{pq-1}-1}(t) \,\frac{\dot{Y} W+ Y\dot{W} }{r}	
= \frac{u}{r}\{ -\alpha +\frac{X+2-pY}{pq-1}+p\frac{Y+2-qX}{pq-1} \}=-\frac{X(t)u(r)}{r},
}
\end{align*}
and analogously for $v^\prime$. Since $X,Y\in C^1$, then $u,v\in C^2$. Moreover, $u,v$ satisfy either \eqref{P radial m} or \eqref{P radial m-} from the respective equations for $\dot{X},\dot{Y},\dot{Z},\dot{W}$ in the dynamical system.

\smallskip	

In other words, ($X,Y,Z,W)$ is a solution of system \eqref{DS+} or \eqref{DS-} in $\K$ if and only if $(u,v)$ defined by \eqref{def u via X,Z} is a positive pair solution of \eqref{P radial m} or \eqref{P radial m-} with $u^{\prime},v^{\prime}<0.$ 

\smallskip

An important role in the study of our problem is played by the following hyperplanes for $\M^+$,
\begin{align}\label{pi M+}
\pi_{\lambda,Z}=\{(X,Y,Z,W): Z=\lambda (N-1)\}\cap\K,\;\;\pi_{\lambda, W}=\{(X,Y,Z,W): W=\lambda (N-1)\}\cap\K,
\end{align}
which, as in the scalar case (see \cite{MNPscalar}), correspond to the vanishing of $u^{\prime\prime}$ and $v^{\prime\prime}$, respectively. They allow us to define the following regions 
\begin{align}\label{R+- M+}
R^+_{\lambda,Z}=\{(X,Y,Z,W)\in \K: Z>\lambda (N-1)\},\; R^-_{\lambda,Z}=\{(X,Y,Z,W)\in \K: Z<\lambda (N-1)\},\nonumber\\ 
R^+_{\lambda,W}=\{(X,Y,Z,W)\in \K: W>\lambda (N-1)\},\; R^-_{\lambda,W}=\{(X,Y,Z,W)\in \K: W<\lambda (N-1)\},
\end{align}
which represent the sets where the corresponding functions $u,v$ are concave or convex.
More precisely, $R^+_{\lambda,Z}$ and $R^+_{\lambda,W}$ are the regions of strictly concavity of $u$ and $v$ respectively, while $R^-_{\lambda,Z}$ and $R^-_{\lambda,W}$ are the regions of strictly convexity of $u$ and $v$.

The corresponding notations for the operator $\M^-$ are 
\begin{align}\label{pi M-}
\pi_{\Lambda,Z}=\{(X,Y,Z,W): Z=\Lambda (N-1)\}\cap\K,\;\;
 \pi_{\Lambda, W}=\{(X,Y,Z,W): W=\Lambda (N-1)\}\cap\K ,
\end{align}
\vspace{-0.8cm}
\begin{align}\label{R+- M-}
R^+_{\Lambda,Z}=\{(X,Y,Z,W)\in \K: Z>\Lambda (N-1)\},\;
R^-_{\Lambda,Z}=\{(X,Y,Z,W)\in \K: Z<\Lambda (N-1)\},\nonumber\\ 
R^+_{\Lambda,W}=\{(X,Y,Z,W)\in \K: W>\Lambda (N-1)\},\;
R^-_{\Lambda,W}=\{(X,Y,Z,W)\in \K: W<\Lambda (N-1)\}.
\end{align}

We recall that Hénon-Lane-Emden problems for Laplacian operators were already studied in \cite{BV} in terms of the dynamical system \eqref{DS+} in the case  $\lambda=\Lambda=1$ after the transformation \eqref{X,Y,Z,W sem a}.

At this stage it is worth observing that the systems \eqref{DS+} and \eqref{DS-} are continuous on $\pi_{\lambda,Z}$, $\pi_{\lambda,W}$. More than that, the right hand sides are locally Lipschitz functions of $X,Y,Z,W$, so the usual ODE theory applies. 
That is, one recovers existence, uniqueness, and continuity with respect to initial data as well as continuity with respect to the parameters $p,q$, whenever $u,v>0$.


\subsection{Stationary points and local analysis}\label{section local}

We start the section investigating the sets where $\dot{X}=0$, $\dot{Y}=0$, $\dot{Z}=0$, and $\dot{W}=0$. 
One writes the dynamical systems \eqref{DS+} and \eqref{DS-} as the following ODE first order autonomous equation
\begin{align}\label{Prob x=(X,Z)}
(\dot{X},\dot{Y},\dot{Z},\dot{W})=F(X,Y,Z,W), \qquad\textrm{where }\quad F:=(f_1,f_2,g_1,g_2). 
\end{align}
Firstly we recall some standard definitions from ODE theory.
\begin{defin}
	A stationary point $Q$ of \eqref{Prob x=(X,Z)} is a zero of the vector field $F$.
	If $\sigma_1, \sigma_2, \sigma_3,\sigma_4$ are the eigenvalues of the Jacobian matrix $DF(Q)$, then $Q$ is hyperbolic if all of them have nonzero real parts. If this is the case, $Q$ is a \textit{source} if $\mathrm{Re}(\sigma_i)>0$ for all $i=1,2,3,4$, and a \textit{sink} if $\mathrm{Re}(\sigma_i)<0$  for $i=1,2,3,4$; $Q$ is a \textit{saddle point} if it is hyperbolic and  $\mathrm{Re}(\sigma_i)<0<\mathrm{Re}(\sigma_j)$ for some $i\neq j$.
\end{defin}

The dynamical system is described through local stable and unstable manifolds near hyperbolic stationary points of the system \eqref{Prob x=(X,Z)}, see for instance \cite[Theorem 6.2]{PalisMelo}. Here the usual theory for autonomous systems applies.
No stationary points exist on the hyperplane subsets \eqref{pi M+}, \eqref{pi M-}, as we will see in Lemmas \ref{stationary M+} and \ref{stationary M-}.

Observe that if $X=0$ then $\dot{X}=0$, which means that a trajectory which starts on the hyperplane $X=0$ never leaves it; similarly for the others coordinate hyperplanes.

The following sets in $(X,Y,Z,W)$ play an important role for the system \eqref{DS+}, 
\begin{align}\label{pi 1,lambda}
\pi_{1,\lambda} = \{\,(X,Y,Z,W):\, Z = \Lambda (\tilde{N}_+-2)-\Lambda X\,\}\cap \K,
\end{align}
\vspace{-1cm}
\begin{align}\label{pi 2,lambda}
\pi_{2,\lambda} = \{\,(X,Y,Z,W):\, W = \Lambda (\tilde{N}_+-2)-\Lambda Y\,\}\cap \K,
\end{align}
which are the sets where $\dot{X}=0$, $X>0$, and $\dot{Y}=0$, $Y>0$ respectively. Also,
\begin{align}\label{pi 3,4,lambda}
\pi_{3,\lambda} =
{\pi_{3,\lambda}^+}\; \cup\;{\pi_{3,\lambda}^-}\,, \quad \pi_{4,\lambda} =
{\pi_{4,\lambda}^+}\; \cup\;{\pi_{4,\lambda}^-}\,
\end{align}
are the sets were $\dot{Z}=0$, $Z>0$, and $\dot{W}=0$, $W>0$ respectively, 
where
\begin{center}
$\pi_{3,\lambda}^+= \{(X,Y,Z,W): Z = \lambda (N-p Y)\}\cap R^+_{\lambda,Z}$, $\pi_{3,\lambda}^-= \{(X,Y,Z,W): Z = \Lambda (\tilde{N}_+- p Y)\}\cap R^-_{\lambda,Z}$.
\vspace{0.2cm}

$\pi_{4,\lambda}^+= \{(X,Y,Z,W): W= \lambda (N-q X)\}\cap R^+_{\lambda,W}$, $\pi_{4,\lambda}^-= \{(X,Y,Z,W): W = \Lambda (\tilde{N}_+- qX)\}\cap R^-_{\lambda,W}$.
\end{center}

Note that $\pi_{1,\lambda}$ is a hyperplane entirely contained in $R^-_{\lambda,Z}$ so that $\dot{X}>0$ in the region $R^+_{\lambda,Z}\cup \pi_{\lambda,Z}$.
Analogously, $\pi_{2,\lambda}\subset R^-_{\lambda,W}$ with $\dot{Y}>0$ in $R^+_{\lambda,W}\cup \pi_{\lambda,W}$.
In turn, $\pi_{3,\lambda}$ and $\pi_{4,\lambda}$ in \eqref{pi 3,4,lambda} are unions of half hyperplanes which join on $(X,\frac{1}{p},\lambda (N-1),W)\in \pi_{\lambda,W}\cap\overline{\pi}_{4,\lambda}$; and on $(\frac{1}{q}, Y,Z,\lambda (N-1))\in \pi_{\lambda,Z}\cap\overline{\pi}_{3,\lambda}$ respectively. Here $\overline{\pi}_{\lambda,i}$ denotes the closure of $\pi_{\lambda,i}$ in $\real^4$. 
The respective sets for $\M^-$ are defined in \eqref{pi 1,Lambda}--\eqref{pi 3,4,Lambda} ahead.

\smallskip

With the goal of studying the complementary region to \eqref{region ABQ}, from now on we assume the following hypothesis on the parameters $\alpha, \beta$ from \eqref{def alpha, beta},
\begin{align}\label{A}
0< \alpha, \beta < \tilde{N}_\pm -2,
\end{align}
where the sign $+$ corresponds to the operator $\M^+$, and $-$ to $\M^-$.

\begin{rmk}\label{rmk p or q larger ps}
The hypothesis \eqref{A} ensures that $p$ or $q$ is larger than $\frac{\tilde{N}_\pm}{\tilde{N}_\pm -2}$. In fact, a pair $(p,q)$ with $p,q \le \frac{\tilde{N}_\pm}{\tilde{N}_\pm -2}$ belongs to the region \eqref{region ABQ}.
\end{rmk}

\begin{lem}[$\M^+$]\label{stationary M+}
Under assumption \eqref{A}, the stationary points of the dynamical system \eqref{DS+} in $\K$ are given as follows:
\begin{center}
$O=(0,0,0,0)$,\quad
$N_0=(0,0,\lambda N,\lambda N)$,\quad
$M_0=(X_0,Y_0,Z_0,W_0)$,\quad $A_0=(\tilde{N}_+-2,,\tilde{N}_+-2,0,0)$,
\vspace{0.1cm}
 
$I_0=(\tilde{N}_+-2,0,0,0)$,\quad
$J_0=(0,\tilde{N}_+-2,0,0)$,\quad
$K_0=(0,0,\lambda N, 0)$,\quad
$L_0=(0,0,0,\lambda N)$,
\vspace{0.1cm}

$P_0=(\tilde{N}_+-2, -2+q(\tilde{N}_+-2),0, \Lambda(\tilde{N}_+-q(\tilde{N}_+-2)))$, \;
$G_0=(\tilde{N}_+-2,0,0,\Lambda (\tilde{N}_+ -q(\tilde{N}_+-2))$,
\vspace{0.1cm}

$Q_0=(-2+p(\tilde{N}_+-2),\tilde{N}_+-2,\Lambda(\tilde{N}_+-p(\tilde{N}_+-2)),0)$\; $H_0=(0,\tilde{N}_+-2, \Lambda (\tilde{N}_+ -p(\tilde{N}_+-2),0)$,
\end{center}
where $X_0 = \alpha$, $Y_0=\beta$, $Z_0 =  \Lambda(\tilde{N}_+-2 - \alpha)$, $W_0 =  \Lambda(\tilde{N}_+-2 - \beta)$.
\end{lem}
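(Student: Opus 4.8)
\textbf{Proof plan for Lemma \ref{stationary M+}.}

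The plan is to find all zeros of the vector field $F=(f_1,f_2,g_1,g_2)$ in $\overline{\K}$ by a systematic case analysis on which of the four coordinates $X,Y,Z,W$ vanish. Since each $f_i,g_i$ has the factorized form $(\text{coordinate})\times(\text{bracket})$, a stationary point is characterized by requiring, for each coordinate, that \emph{either} the coordinate itself is zero \emph{or} the corresponding bracket vanishes. This splits into $2^4=16$ combinatorial cases; many collapse because once one coordinate is prescribed the bracket equations become linear (piecewise-linear through $M_+$) in the others, so each surviving case yields at most one point. First I would record the two basic identities that drive everything: (a) if $Z\le\lambda(N-1)$ then $M_+(\lambda(N-1)-Z)=(\lambda(N-1)-Z)/\Lambda$, while if $Z\ge\lambda(N-1)$ it equals $(\lambda(N-1)-Z)/\lambda$, and similarly for $W$; (b) the definitions $\alpha=\frac{2(p+1)}{pq-1}$, $\beta=\frac{2(q+1)}{pq-1}$ satisfy $\alpha+\beta=\tilde N_+-2$ on $\sH_+$ only, but in general $\alpha(pq-1)=2(p+1)$, $\beta(pq-1)=2(q+1)$, which is what makes $(X_0,Y_0,Z_0,W_0)=(\alpha,\beta,\dots)$ solve the interior system.

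The core computation is the fully interior case $X,Y,Z,W>0$, producing $M_0$. Here all four brackets must vanish. From the $\dot Z$ and $\dot W$ brackets one solves $M_+(\lambda(N-1)-Z)=pY-1$ and $M_+(\lambda(N-1)-W)=qX-1$; substituting into the $\dot X$ and $\dot W$ brackets gives $X+1=pY-1$... — more precisely one gets the linear system $X = pY-2$ and $Y = qX-2$ wait, let me re-derive: $\dot X=0$ gives $X+1 = M_+(\lambda(N-1)-Z)$ and $\dot Z=0$ gives $M_+(\lambda(N-1)-Z) = pY-1$, hence $X+1 = pY-1$, i.e. $X = pY-2$; symmetrically $Y = qX-2$. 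Solving, $X = \frac{2(p+1)}{pq-1}=\alpha$, $Y=\frac{2(q+1)}{pq-1}=\beta$. Then one must check the branch of $M_+$: since $X=\alpha<\tilde N_+-2$ by hypothesis \eqref{A}, the relation $M_+(\lambda(N-1)-Z)=\alpha+1>0$ forces $\lambda(N-1)-Z<0$, i.e. we are on the branch $M_+(s)=s/\lambda$, giving $\lambda(N-1)-Z = \lambda(\alpha+1)$, hence $Z=\lambda(N-1)-\lambda(\alpha+1)=\lambda(N-2-\alpha)$; but one wants $Z_0=\Lambda(\tilde N_+-2-\alpha)$, and indeed $\lambda(N-2) = \lambda(N-1)-\lambda$ while $\Lambda(\tilde N_+-2)=\Lambda(\frac{\lambda}{\Lambda}(N-1)+1-2)=\lambda(N-1)-\Lambda$, so after the dust settles $Z_0 = \Lambda(\tilde N_+-2-\alpha)$ as claimed (this is exactly the algebra I would not write out in full). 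By \eqref{A} this $Z_0>0$, consistent with being interior, and likewise $W_0>0$.

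After that, the boundary cases are routine: set one or more coordinates to zero and repeat. For instance $X=Y=0$ forces $\dot X=\dot Y=0$ automatically, and then $\dot Z=0$, $\dot W=0$ become $M_+(\lambda(N-1)-Z)\in\{-1,\ \lambda(N-1)-Z>0\text{ with value }\lambda(N-1)-Z\text{ over }\lambda\}$... yielding $Z\in\{0,\lambda N\}$ and likewise $W$, whence the four points $O,N_0,K_0,L_0$. The cases with exactly one of $X,Y$ zero give $I_0,J_0$ (the remaining nonzero coordinate forced to $\tilde N_+-2$ from $X+1=\dots$, $Z=W=0$) and, allowing the cross-coordinate bracket to vanish instead, the points $G_0,H_0,P_0,Q_0$; the case $Z=W=0$ with $X,Y>0$ gives $A_0$. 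I would organize this as a short table. \textbf{The main obstacle} is purely bookkeeping: making sure that in every case the value landed in the correct piece of the piecewise-linear map $M_+$ (i.e. checking the sign of $\lambda(N-1)-Z$ and $\lambda(N-1)-W$ against the branch that was assumed), and verifying via hypothesis \eqref{A} that all the listed coordinates are $\ge 0$ so the points genuinely lie in $\overline{\K}$ — in particular that $\tilde N_+-q(\tilde N_+-2)$ and $\tilde N_+-p(\tilde N_+-2)$ can be negative, in which case $P_0,G_0$ (resp. $Q_0,H_0$) leave $\overline{\K}$ and should be discarded; one must state the convention that the lemma lists the candidate points and which survive depends on the sign of these quantities. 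No single step is deep; the risk is an arithmetic slip in reconciling $N$-expressions with $\tilde N_+$-expressions.
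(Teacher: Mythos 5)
Your overall strategy---classify zeros of $F$ by which coordinates vanish and solve the resulting (piecewise-)linear bracket equations---is essentially what the paper does, but the paper shortcuts the branch bookkeeping that you identify as the main obstacle. It first observes that $\pi_{1,\lambda}\subset R^-_{\lambda,Z}$ and $\pi_{2,\lambda}\subset R^-_{\lambda,W}$, so $\dot X>0$ whenever $X>0$ and $Z\ge\lambda(N-1)$ (and similarly for $Y,W$); hence every stationary point with $X>0$ automatically lies in $R^-_{\lambda,Z}$ and every one with $Y>0$ lies in $R^-_{\lambda,W}$. After disposing of the $X=Y=0$ cases (which give $O,N_0,K_0,L_0$), all remaining points sit in $\overline{R}^-_{\lambda,Z}\cap\overline{R}^-_{\lambda,W}$, where the system is exactly the Lane--Emden system for the Laplacian in dimension $\tilde N_+$, and the list is read off from the known classification in \cite{BV}. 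Your plan is more self-contained but pays for it with the case-by-case branch checks. Your caveat that $P_0,G_0,Q_0,H_0$ may leave $\overline{\K}$ depending on the sign of $\tilde N_+-q(\tilde N_+-2)$ etc.\ is legitimate and consistent with how the paper later restricts to ranges such as $2<(\tilde N_+-2)q<\tilde N_+$ in Proposition \ref{local study stationary points}.

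There is, however, a concrete sign error in the one computation you carried out. From \eqref{m,M+}, $M_+(s)=s/\lambda$ for $s\le 0$ and $M_+(s)=s/\Lambda$ for $s>0$, so $M_+(s)>0$ if and only if $s>0$. The relation $M_+(\lambda(N-1)-Z)=\alpha+1>0$ therefore forces $\lambda(N-1)-Z>0$ (i.e.\ $Z<\lambda(N-1)$, consistent with the paper's localization), putting you on the branch $s/\Lambda$, not $s/\lambda$. The correct conclusion is $\lambda(N-1)-Z=\Lambda(\alpha+1)$, hence
\[
Z=\lambda(N-1)-\Lambda(\alpha+1)=\Lambda\Bigl(\tfrac{\lambda}{\Lambda}(N-1)-1-\alpha\Bigr)=\Lambda(\tilde N_+-2-\alpha)=Z_0 .
\]
Your branch choice yields $Z=\lambda(N-2-\alpha)=\lambda(N-1)-\lambda(\alpha+1)$, which differs from $\Lambda(\tilde N_+-2-\alpha)=\lambda(N-1)-\Lambda(\alpha+1)$ by $(\Lambda-\lambda)(\alpha+1)\neq 0$ whenever $\lambda\neq\Lambda$; the two expressions do not reconcile ``after the dust settles.'' The fix is purely local, but since you flagged branch consistency as the crux of the proof, this is exactly the step that must be done correctly in each case.
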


\begin{proof} We already noticed that $\dot{X}>0$ in $R^+_{\lambda,Z}\cup \pi_{\lambda,Z}$, and $\dot{Y}>0$ in $R^+_{\lambda,W}\cup\pi_{\lambda,W}$. 
In particular, no stationary points are admissible on the concavity hyperplanes $\pi_{\lambda,Z}$ and $\pi_{\lambda,W}$, neither in the interior of the regions $R^+_{\lambda,Z}$ and $R^+_{\lambda,W}$.

On the boundary of $R^+_{\lambda,W}$, $\dot{X}=0$ implies $X=0$. In this case, by \eqref{pi 3,4,lambda}, $\pi_{4,\lambda}\subset R^+_{\lambda,W}$ and $W=\lambda N$, so one gets the point $N_0$.
Meanwhile, from $W=0$ we obtain the point $L_0$.
Analogously, on the boundary of ${R}^+_{\lambda,W}$, $\dot{Y}=0$ yields $Y=0$, and so we derive the point $K_0$. 

Therefore, all the other points are computed in the intersection of the regions $\overline{R}^-_{\lambda,Z}$ and $\overline{R}^-_{\lambda,W}$ with respect to either the coordinate hyperplanes or the planes defined in \eqref{pi 1,lambda}-\eqref{pi 3,4,lambda}. Note that this corresponds to stationary points in the case of the Laplacian operator in dimension $\tilde{N}_+$ for both $u,v$. 
\end{proof}

\begin{rmk}
	The assumption \eqref{A} ensures that $M_0\in \K$. 
\end{rmk}

Analogously, for the operator $\M^-$ we define:
\begin{align}\label{pi 1,Lambda}
\pi_{1,\Lambda} = \{\,(X,Y,Z,W):\, Z = \lambda (\tilde{N}_--2)-\lambda X\,\}\cap \K,
\end{align}
\vspace{-1cm}
\begin{align}\label{pi 2,Lambda}
\pi_{2,\Lambda} = \{\,(X,Y,Z,W):\, W = \lambda (\tilde{N}_--2)-\lambda Y\,\}\cap \K,
\end{align}
\vspace{-1cm}
\begin{align}\label{pi 3,4,Lambda}
\pi_{3,\Lambda} =
{\pi_{3,\Lambda}^+}\; \cup\;{\pi_{3,\lambda}^-}, \quad
\pi_{4,\Lambda} =
{\pi_{4,\Lambda}^+}\; \cup\;{\pi_{4,\Lambda}^-},
\end{align}
which are the sets such that $\dot{X}=0,\, X>0$; $\dot{Y}=0,\, Y>0$; $\dot{Z}=0, \,Z>0 $; and $\dot{W}=0, \,W>0$ respectively, where
\begin{center}
	$\pi_{3,\Lambda}^+= \{(X,Y,Z,W): Z = \Lambda (N-p Y)\}\cap R^+_{\Lambda,Z}$, $\pi_{3,\Lambda}^-= \{(X,Y,Z,W): Z = \lambda (\tilde{N}_-- p Y)\}\cap R^-_{\Lambda,Z}$,
	\vspace{0.2cm}
	
	$\pi_{4,\Lambda}^+= \{(X,Y,Z,W): W= \Lambda (N-q X)\}\cap R^+_{\Lambda,W}$,\,$\pi_{4,\Lambda}^-= \{(X,Y,Z,W): W = \lambda (\tilde{N}_-- qX)\}\cap R^-_{\Lambda,W}$.
\end{center}
Then one finds out the respective stationary points for $\M^-$.
\begin{lem}[$\M^-$]\label{stationary M-}
Under assumption \eqref{A}, the stationary points of the system \eqref{DS-} in $\K$ are
\begin{center}
		$O=(0,0,0,0)$,\quad
		$N_0=(0,0,\Lambda N,\Lambda N)$,\quad
		$M_0=(X_0,Y_0,Z_0,W_0)$,\quad $A_0=(\tilde{N}_--2,,\tilde{N}_--2,0,0)$,
		\vspace{0.1cm}
		
		$I_0=(\tilde{N}_--2,0,0,0)$,\quad
		$J_0=(0,\tilde{N}_--2,0,0)$,\quad
		$K_0=(0,0,\Lambda N, 0)$,\quad
		$L_0=(0,0,0,\Lambda N)$,
		\vspace{0.1cm}
		
		$P_0=(\tilde{N}_--2, -2+q(\tilde{N}_--2),0, \lambda(\tilde{N}_--q(\tilde{N}_--2)))$,\;
		$G_0=(\tilde{N}_--2,0,0,\lambda (\tilde{N}_- -q(\tilde{N}_--2))$,
		\vspace{0.1cm}
		
		$Q_0=(-2+p(\tilde{N}_--2),\tilde{N}_--2,\lambda(\tilde{N}_--p(\tilde{N}_--2)),0)$,\; $H_0=(0,\tilde{N}_--2, \lambda (\tilde{N}_- -p(\tilde{N}_--2),0)$,
\end{center}
	where $X_0 = \alpha$, $Y_0=\beta$, $Z_0 =  \lambda(\tilde{N}_--2 - \alpha)$, $W_0 =  \lambda(\tilde{N}_--2 - \beta)$.
\end{lem}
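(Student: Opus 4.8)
The plan is to mirror the proof of Lemma \ref{stationary M+}, replacing throughout the role of $\lambda$ by $\Lambda$ in the concavity hyperplanes and $\tilde{N}_+$ by $\tilde{N}_-$, and exploiting the explicit piecewise form of $M_-$ in \eqref{m,M-}, namely $M_-(s)=s/\Lambda$ for $s\le 0$ and $M_-(s)=s/\lambda$ for $s>0$. The key identity to keep in mind is $\Lambda(N-1)=\lambda(\tilde{N}_- -1)$, which follows directly from \eqref{dim-like}.

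First I would analyze $\dot X=0$ and $\dot Y=0$ in \eqref{DS-}. On $R^+_{\Lambda,Z}\cup\pi_{\Lambda,Z}$ one has $\Lambda(N-1)-Z\le 0$, hence $\dot X=X[\,X-(N-2)+Z/\Lambda\,]$, and since $Z/\Lambda\ge N-1$ there, $\dot X>0$; in particular no stationary point lies on $\pi_{\Lambda,Z}$ nor in the interior of $R^+_{\Lambda,Z}$. On $R^-_{\Lambda,Z}$, the equation $\dot X=0$ with $X>0$ forces $\lambda(X+1)=\Lambda(N-1)-Z$, which by the above identity rearranges to $Z=\lambda(\tilde{N}_- -2-X)$, i.e.\ the set $\pi_{1,\Lambda}$ of \eqref{pi 1,Lambda}, and this set indeed sits inside $R^-_{\Lambda,Z}$. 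Symmetrically, $\dot Y>0$ on $R^+_{\Lambda,W}\cup\pi_{\Lambda,W}$, while $\dot Y=0$, $Y>0$ characterizes $\pi_{2,\Lambda}$. Treating $\dot Z=0$ and $\dot W=0$ the same way, distinguishing $Z\gtrless\Lambda(N-1)$ (resp.\ $W\gtrless\Lambda(N-1)$), one recovers $Z=\Lambda(N-pY)$ on $R^+_{\Lambda,Z}$ and $Z=\lambda(\tilde{N}_- -pY)$ on $R^-_{\Lambda,Z}$, i.e.\ the half-hyperplanes whose union is $\pi_{3,\Lambda}$, and likewise $\pi_{4,\Lambda}$.

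It then remains to collect the stationary points. On the coordinate hyperplane $\{X=0\}$ (resp.\ $\{W=0\}$) the conditions force $W=\Lambda N$ or $W=0$, producing $N_0$ and $L_0$, and, adding $Y=0$, also $K_0$ and $O$; all remaining stationary points must lie in $\overline{R}^-_{\Lambda,Z}\cap\overline{R}^-_{\Lambda,W}$, where by the computation above system \eqref{DS-} coincides with the one generated by the Laplacian in the dimensional-like number $\tilde{N}_-$. Intersecting the coordinate hyperplanes with $\pi_{1,\Lambda}$, $\pi_{2,\Lambda}$, $\pi_{3,\Lambda}^-$, $\pi_{4,\Lambda}^-$ yields exactly $I_0,J_0,A_0,G_0,P_0,H_0,Q_0$ together with the interior point $M_0=(\alpha,\beta,\lambda(\tilde{N}_- -2-\alpha),\lambda(\tilde{N}_- -2-\beta))$; assumption \eqref{A} guarantees $M_0\in\K$ and rules out candidates with negative coordinates. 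This is the list in the statement.

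The main obstacle is purely bookkeeping: carrying out the finitely many case distinctions coming from which coordinate hyperplanes vanish and from the two branches of $M_-$, and checking that the enumeration is exhaustive while each listed point genuinely solves $F=0$ — in particular that the gluing of $\pi_{3,\Lambda}^\pm$ (resp.\ $\pi_{4,\Lambda}^\pm$) along $\pi_{\Lambda,Z}$ (resp.\ $\pi_{\Lambda,W}$) is consistent, so that no stationary point is lost or double-counted at the interface. Since every inequality and identity used is the exact counterpart of those in Lemma \ref{stationary M+} under the substitution $\lambda\leftrightarrow\Lambda$, $\tilde{N}_+\leftrightarrow\tilde{N}_-$, no new phenomenon appears and the argument transfers essentially verbatim.
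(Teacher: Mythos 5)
Your proposal is correct and takes essentially the same route as the paper, which proves the $\M^+$ case (Lemma \ref{stationary M+}) by locating the zero sets $\pi_{1,\Lambda},\dots,\pi_{4,\Lambda}$ relative to the concavity regions and then simply asserts that the $\M^-$ list follows by the substitution $\lambda\leftrightarrow\Lambda$, $\tilde{N}_+\leftrightarrow\tilde{N}_-$; your verification via $M_-$ and the identity $\Lambda(N-1)=\lambda(\tilde{N}_--1)$ is exactly the intended argument.
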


As in \cite{BV}, the points $C_0=(0,-2,0,\lambda N)$, $D_0=(-2,0,\lambda N, 0)$, $R_0=(0,-2,\lambda (N+2p),\lambda N)$, and $S_0=(-2,0,\lambda N,\lambda (N+2q))$ are also stationary points for both systems \eqref{DS+} and \eqref{DS-}, but they do not play any role in our analysis since they do not belong to $\overline{\K}$.\smallskip

Let us also define the subsets of hyperplanes
\begin{align}\label{L+-}
L^\pm_X=\{(X,Y,Z,W):\, X=\tilde{N}_\pm-2\}\cap \K, \;\; L^\pm_Y=\{(X,Y,Z,W):\, Y=\tilde{N}_\pm-2\}\cap\K.
\end{align}

The next proposition gathers the crucial dynamics at each stationary point in $\K$. In what follows we  denote by $\mathcal{W}_s (P)$ and $\mathcal{W}_u (P)$ the stable (directions entering) and unstable (directions exiting) manifolds at a stationary point $P$, respectively. Set $\dim_s(P):=\dim (\W_s(P))$, $\dim_u(P):=\dim (\W_u(P))$, see \cite{Hale, Wiggins2003}.

\begin{prop}[$\M^\pm$]\label{local study stationary points}
Assume \eqref{A}, then the following properties are verified for the dynamical systems \eqref{DS+} and \eqref{DS-},
	\begin{enumerate}
\item $($Point $M_{0})$  The point $M_0$ is always a saddle point. More precisely:

(i) there are trajectories converging to $M_{0}$ as $t\rightarrow +\infty$ whose corresponding solutions $(u,v)$ of \eqref{P radial m} or \eqref{P radial m-} satisfy $\lim_{r\rightarrow\infty}r^{\alpha}u=c_1>0$ and $\lim_{r\rightarrow\infty}r^{\beta}v=c_2>0$; 

(ii) there exist trajectories approaching $M_{0}$ as $t\rightarrow -\infty$ with respective solutions $(u,v)$ of \eqref{P radial m} or \eqref{P radial m-} verifying $\lim_{r\rightarrow0}r^{\alpha}u=c_3>0$ and $\lim_{r\rightarrow0}r^{\beta}v=c_4>0$;

(iii) the hyperbola $\tH_\pm$ in \eqref{tH} is equal to the set of points $(p,q)$ for which the linearized system at $M_{0}$ has imaginary roots.
		
\item $($Point $N_{0})$ 
A trajectory exits $N_0$ at $ -\infty$ if and only if its corresponding solution $(u,v)$  of \eqref{P radial m} or \eqref{P radial m-} is regular. Also, $\dim_u  (N_0)=2$ and there are infinitely many  trajectories issued from $N_0$.

\item $($Point $A_{0})$  
(i) If $p(\tilde{N}_\pm-2)>\tilde{N}_\pm$
and $q(\tilde{N}_\pm-2)>\tilde{N}_\pm$ then there exist trajectories converging to $A_{0}$ as $t\rightarrow +\infty$.
If $p\neq q$ then $\dim_s(A_0)=2$.

The corresponding solutions $(u,v)$ of \eqref{P radial m} or \eqref{P radial m-} are such that \vspace{-0.1cm}
\begin{center}
	$\lim_{r\to +\infty} r^{\tilde{N}_\pm-2}u=c_1>0$, \quad and \quad $\lim_{r\to +\infty} r^{\tilde{N}_\pm-2}v=c_2>0$.
\end{center}

(ii) If either $p(\tilde{N}_\pm-2)<\tilde{N}_\pm$ or $q(\tilde{N}_\pm-2)<\tilde{N}_\pm$ (they cannot hold simultaneously by assumption \eqref{A}, see Remark \ref{rmk p or q larger ps}), then $\dim_s(A_0)=1$ with either $Z=0$ or $W=0$ when $p\neq q$. In this case there is no trajectory in $\K$ converging to $A_{0}$ when $t\rightarrow +\infty$.

\item $($Point $P_{0})$ 
\textit{(i)} If $2<(\tilde{N}_\pm-2)q<\tilde{N}_\pm$ then $\dim_s(P_0)=2$.
Further,
\begin{center}
	$\dim\left(\W_s(P_0)\cap\{Z=0\}\right)=1$,
\end{center} \vspace{-0.1cm} and there exist trajectories in $\K$ converging to $P_{0}$ when $t\rightarrow +\infty$. The corresponding solutions $(u,v)$ of \eqref{P radial m} or \eqref{P radial m-} satisfy \vspace{-0.2cm}
\begin{center}
	$\lim_{r\rightarrow +\infty} r^{\tilde{N}_\pm-2}u=c_1>0$,  \quad $\lim_{r\rightarrow +\infty} r^{\kappa}v=c_2>0$, \quad $\kappa:=(\tilde{N}_\pm-2)q-2$;  
\end{center}

\textit{(ii)} If $q(\tilde{N}_\pm-2)=\tilde{N}_\pm$ then  $P_0=A_0$ and there is a trajectory in $\K$ converging to this point as $t\to +\infty$. The corresponding solutions $(u,v)$ of \eqref{P radial m} or \eqref{P radial m-} have decay \vspace{-0.1cm}
\begin{align}\label{decay P0}
\textrm{$\lim_{r\to +\infty} r^{\tilde{N}_\pm-2}\,u=c_1>0$, \quad 
	and \quad $\lim_{r\to +\infty} r^{\tilde{N}_\pm-2}\,|\mathrm{ln}r|^{-1}\,v=c_2>0$.}
\end{align}

\item $($Point $Q_{0})$ \textit{(i)} If $2<(\tilde{N}_\pm-2)p<\tilde{N}_\pm$ then  $\dim_s(Q_0)=2$, 
\begin{center}
	$\dim\left(\W_s(Q_0)\cap\{W=0\}\right)=1$,
\end{center} \vspace{-0.1cm}
and there exist trajectories converging to $Q_{0}$ when $t\rightarrow +\infty$. The corresponding solutions $(u,v)$ of \eqref{P radial m} or \eqref{P radial m-} verify \vspace{-0.2cm}
\begin{center}
	$\lim_{r\rightarrow +\infty} r^{\ell}u=c_1>0$, \quad  $\lim_{r\rightarrow +\infty} r^{\tilde{N}_\pm-2}v=c_2>0$,\quad $\ell:=(\tilde{N}_\pm-2)q-2$. \vspace{-0.1cm}
\end{center}
	\end{enumerate}

\textit{(ii)} If  $p(\tilde{N}_\pm-2)=\tilde{N}_\pm$ thus $Q_0=A_0$ and there exists a trajectory in $\K$ converging to $A_{0}$ as $t\rightarrow +\infty$, with corresponding solutions $(u,v)$ of \eqref{P radial m} or \eqref{P radial m-} such that
\vspace{-0.1cm}
\begin{center}
	$\lim_{r\to +\infty} r^{\tilde{N}_\pm-2}\,|\mathrm{ln}r|^{-1}\,u=c_1>0$, \quad and \quad 
	$\lim_{r\to +\infty} r^{\tilde{N}_\pm-2}\,v=c_2>0$.
\end{center} 
Further, there is no trajectory converging to any of the points $O$, $K_{0}$, $L_{0}$, $I_0$, $J_0$, $G_0$, $H_0$ when $r\to +\infty$. 
\end{prop}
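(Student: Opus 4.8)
The plan is to treat each stationary point in Proposition~\ref{local study stationary points} by the standard recipe: compute the Jacobian $DF$ of the vector field $F=(f_1,f_2,g_1,g_2)$ in \eqref{Prob x=(X,Z)} at the point, read off the eigenvalues (which turn out to be essentially decoupled because of the block structure coming from the fact that $X,W$ interact through $q$ while $Y,Z$ interact through $p$), and then translate the sign of each real part into a stable/unstable dimension count. The decay rates claimed for the associated $(u,v)$ then follow from \eqref{def u via X,Z}: if a trajectory converges to $(X_*,Y_*,Z_*,W_*)$ as $t\to\pm\infty$ then $u(r)\sim r^{-\alpha}(X_*Z_*)^{1/(pq-1)}(Y_*W_*)^{p/(pq-1)}$, so one just plugs the coordinates of the stationary point into that formula; the logarithmic corrections at the degenerate cases $q(\tilde N_\pm-2)=\tilde N_\pm$ etc.\ come from a zero eigenvalue producing a resonance, handled by a center-manifold / normal-form computation yielding the $|\ln r|$ factor.

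\textbf{Order of steps.} First I would do point $M_0$: linearize, observe that the characteristic polynomial factors into a quadratic in the ``radial'' pair and a quadratic in the ``coupling'' pair, show two eigenvalues are always of one sign and two of the other (hence saddle), and identify the curve on which the coupling quadratic has negative discriminant --- this is exactly $\alpha+\beta=\tilde N_\pm-2$, i.e.\ $\tH_\pm$, giving (iii). Parts (i)--(ii) of $M_0$ are then just reading the decay from \eqref{def u via X,Z} with $(X_0,Y_0,Z_0,W_0)=(\alpha,\beta,\Lambda(\tilde N_\pm-2-\alpha),\Lambda(\tilde N_\pm-2-\beta))$ for $\M^+$ (resp.\ with $\lambda$ for $\M^-$). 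Next $N_0=(0,0,\lambda N,\lambda N)$: here I would use Lemmas~\ref{lemma u,v concave at 0}--\ref{lemma u,v decreasing} to see that $X,Y\to 0^+$, $Z,W\to(\lambda\text{ or }\Lambda)N$ as $r\to 0$ is precisely the regular-solution behavior, compute $DF(N_0)$ to get two positive eigenvalues (the $X,Y$ directions, with eigenvalue related to $M_\pm(\lambda(N-1)-\lambda N)=-1$ giving $+1$-type exits) and two with negative real part in the $Z,W$ block, so $\dim_u(N_0)=2$, and invoke the unstable manifold theorem \cite[Thm 6.2]{PalisMelo} for the ``infinitely many trajectories'' claim. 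Then $A_0$, $P_0$, $Q_0$ are done in sequence: each sits on a coordinate face ($Z=0$ and/or $W=0$), so the Jacobian is block-triangular and some eigenvalues are immediate (e.g.\ the $Z$-direction eigenvalue at $A_0$ is $1+M_\pm(\tilde N_\pm\cdot 0\text{-type})\ldots$, whose sign flips exactly at $p(\tilde N_\pm-2)=\tilde N_\pm$), explaining the dichotomies in (i)/(ii) and the transversality statements like $\dim(\W_s(P_0)\cap\{Z=0\})=1$. Finally the ``no trajectory converges to $O,K_0,L_0,I_0,J_0,G_0,H_0$'' claim: for each of these I would exhibit at least one eigenvalue of $DF$ with positive real part in a direction transverse to the relevant coordinate faces, so no orbit in the open cone $\K$ can limit onto it; alternatively, for $O,I_0,J_0,K_0,L_0$ one can argue directly from the monotonicity/positivity that $XZ$ and $YW$ stay bounded below away from these points along any trajectory coming from $N_0$.

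\textbf{Main obstacle.} The routine part is the eigenvalue bookkeeping; the genuinely delicate points are (a) the degenerate resonant cases $q(\tilde N_\pm-2)=\tilde N_\pm$ (and the symmetric one), where $P_0$ merges with $A_0$, a zero eigenvalue appears, and one must run a center-manifold reduction to extract the $r^{\tilde N_\pm-2}|\ln r|^{-1}$ decay rather than a clean power --- here I would set up the normal form on the one-dimensional center manifold and show the nonlinearity is nondegenerate so the correction is logarithmic; and (b) making the transversality/intersection-dimension statements (e.g.\ $\dim(\W_s(P_0)\cap\{Z=0\})=1$) rigorous, which requires checking that the stable eigenspace is genuinely transverse to the invariant hyperplane $\{Z=0\}$ and then using invariance of $\{Z=0\}$ under the flow together with the stable manifold theorem. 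I expect (a) to be the real work; everything else is careful but standard ODE linearization combined with substitution into \eqref{def u via X,Z}.
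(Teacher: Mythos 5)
Your overall strategy (linearize $F$ at each stationary point, count signs of eigenvalues, invoke the stable/unstable manifold theorem, and handle the resonant cases $q(\tilde N_\pm-2)=\tilde N_\pm$, $p(\tilde N_\pm-2)=\tilde N_\pm$ by a center-manifold/normal-form computation producing the logarithm) is the same as the paper's, with one organizational difference: the paper first observes that each stationary point lies in the interior of a region $R^\pm_{\cdot,Z}\cap R^\pm_{\cdot,W}$ where both $u$ and $v$ have fixed concavity, so that locally the system is exactly the Laplacian Lane--Emden system in dimension $N$ or $\tilde N_\pm$, and then imports items 1--5 (except 4(ii), 5(ii)) from the corresponding analysis in \cite{BV}; you propose to redo all the eigenvalue bookkeeping directly. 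That is legitimate and self-contained, just more work; your identification of 4(ii)/5(ii) as the genuinely new difficulty, and of the center manifold as the right tool there, matches the paper.

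There is, however, one concrete gap in your method for the decay rates. You claim that the asymptotics of $(u,v)$ follow by ``plugging the coordinates of the stationary point'' into \eqref{def u via X,Z}. This works only at $M_0$, where all four coordinates are strictly positive, and yields $\lim r^\alpha u=c_1>0$, $\lim r^\beta v=c_2>0$. At $A_0$, $P_0$, $Q_0$ one has $Z_*=0$ and/or $W_*=0$, so the products $XZ$, $YW$ in \eqref{def u via X,Z} tend to $0$ and the formula degenerates: it tells you only that $r^\alpha u\to 0$, not that $r^{\tilde N_\pm-2}u$ has a positive limit (note $\alpha<\tilde N_\pm-2$ under \eqref{A}, so the claimed exponent is not $\alpha$). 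The correct extraction is to go back to \eqref{X,Y,Z,W sem a}: since $(\ln u)'(r)=-X(\ln r)/r$ and $X(t)\to\tilde N_\pm-2$, one gets $u=r^{-(\tilde N_\pm-2)+o(1)}$, and to upgrade this to $\lim_{r\to\infty}r^{\tilde N_\pm-2}u=c_1>0$ one must show that $X$ converges to its limit at an integrable (exponential) rate, which is where the hyperbolicity of the relevant eigenvalues (e.g.\ $\sigma_1<0$ at $P_0$, forcing exponential convergence of $Z$ to $0$ and hence of $X$ to $\tilde N_\pm-2$ via the eigenvector relations) actually enters. This is exactly the step carried out in the paper (and in \cite{MNPscalar}); as written, your proposal skips it, and in the resonant cases the convergence of $X$ is only algebraic, which is precisely why the clean power law fails and the $|\ln r|$ correction appears. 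Once this integration step is added, the rest of your plan goes through.
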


\begin{proof}
The dynamics at each stationary point depends upon the linearization of the systems \eqref{DS+} and \eqref{DS-}. 
Since each stationary point belongs to the interior of a region where the concavity of $u,v$ is well defined and they coincide, then our systems correspond to a standard Lane-Emden system involving the Laplacian operator, either in dimension $N$ or in dimension $\tilde{N}_\pm$.
So, the local analysis stated in items 1--5 is implied by \cite[Propositions 4.1--4.11]{BV}, with the exception of item 4(ii) and 5(ii). For the latter, we need to gather some techniques employed in \cite[Section 4.3]{VanderUni} and \cite[Theorem 1.4 (2)]{BV} on the critical case.
We present some details in what follows, for reader's convenience. 
To fix the ideas we consider the operator $\M^+$; for $\M^-$ it is analogous.

Note that the linearization around a stationary point is written as 
\begin{align*}
L(X,Y,Z,W)= \left(
\begin{array}{cccc}
2X - (\tilde{N}-2) + \frac{Z}{\iota} & 0 & \frac{X}{\iota} & 0 \\
0 & 2Y - (\tilde{N}-2) + \frac{W}{\iota} & 0 & \frac{Y}{\iota}  \\
0 & -pZ & \tilde{N}-pY-\frac{2Z}{\iota} & 0\\
-qW & 0 &0 & \tilde{N}-qX-\frac{2W}{\iota}
\end{array}
\right),
\end{align*}
where $\tilde{N}=N$ and $\iota=\lambda$ in $ R^+_{\lambda,Z}\cap R^+_{\lambda,W} $, while $\tilde{N}=\tilde{N}_+$ and $\iota=\Lambda$ in $ R^-_{\lambda,Z}\cap R^-_{\lambda,W} $.

\medskip

\textit{2.} The computation of $L(N_0)$ produces the eigenvalues $2$ and $-N$, each one with eigenspaces of dimension two. Also, the eigenvectors associated with the eigenvalue $2$ have the form 
\begin{align*}
\textstyle	(X,Y,Z,W), \quad \textrm{where } \;\;\; Z= -\frac{p\lambda N}{N+2}Y, \quad W=-\frac{q\lambda N}{N+2}X,
\end{align*} 
that is, the tangent unstable plane is spanned by the eigenvectors
$(1,0, 0,-\frac{q\lambda N}{N+2})$ and 
$(0,1,-\frac{p\lambda N}{N+2},0)$. 
Thus, the tangent plane to the stable manifold at $N_0$ is described by
\begin{align}\label{eigenvector N0}
\textstyle 	(X,Y,\,\lambda N-\frac{p\lambda N}{N+2}Y,\,\lambda N-\frac{q\lambda N}{N+2}X), 
\end{align}
from which the statement of item 2 follows.

\textit{3.} The eigenvectors of the linearization around $A_0$ related to the negative eigenvalues $\lambda_1=\tilde{N}_+-p(\tilde{N}_+-2)$ and $\lambda_2=\tilde{N}_+-q(\tilde{N}_+-2)$ satisfy
\begin{center}
	$X=-\frac{\tilde{N}_+-2}{ \Lambda (p(\tilde{N}_+-2)-2) } Z$, \;  \; $Y=W=0$\; regarding $\lambda_1$;
	
 $Y=-\frac{\tilde{N}_+-2}{\Lambda(q(\tilde{N}_+-2)-2)}W$, \; $X=Z=0$ \; with respect to $\lambda_2$;
\end{center}
whenever $p\neq q$, which gives a plane spanned by the vectors
\begin{center}
	$\left(-\frac{\tilde{N}_+-2}{ \Lambda (p(\tilde{N}_+-2)-2) }  ,0,1,0\right)$ \; and \; $\left(0,-\frac{\tilde{N}_+-2}{\Lambda(q(\tilde{N}_+-2)-2)},0,1\right )$
\end{center}
and translated to the point $A_0$, namely
\begin{align}\label{eigenvector A0}
\textstyle	\left( \tilde{N}_+-2 -\frac{\tilde{N}_+-2}{ \Lambda (p(\tilde{N}_+-2)-2) } Z \, , \, \tilde{N}_+-2-\frac{\tilde{N}_+-2}{\Lambda(q(\tilde{N}_+-2)-2)}W \, ,\,Z,W \right).
\end{align}

\smallskip
	
\textit{4.} 
$L(P_0)$ has negative eigenvalues $\sigma_1=(pq-1)(\alpha - \tilde{N}_+ +2)$ and $\sigma_2=q(\tilde{N}_+ -2)-\tilde{N}_+$, whose eigenvectors satisfy\vspace{-0.1cm}
\begin{align}\label{exp X,Z P0}
\textstyle c_1 X+\frac{\tilde{N}_+-2}{\Lambda}Z=0,\;
c_2Y+\frac{\kappa}{\Lambda}W=0,\; c_3X+c_4W=0 \; \textrm{ for } \sigma_1;
\end{align}\vspace{-0.8cm}
\begin{align}\label{eig P0 sigma2}
\textstyle X=Z=0,\; (\tilde{N}_+-2 )Y+\frac{\kappa}{\Lambda} W=0 \;  \textrm{ for } \sigma_2;
\end{align}
where
$c_1=pq (\tilde{N}_{+}-2)-2(p+1)=pq(\tilde{N}_+ -2-\alpha)+\alpha>0$,  $c_2=\kappa -\sigma_1$, $c_3=q\Lambda\sigma_2$, $c_4=\sigma_2-\sigma_1$, and $\kappa=(\tilde{N}_+-2)q-2$.
Also, $c_3<0$ in the case \textit{(i)}, and $c_3=0$ for \textit{(ii)}.
\smallskip

Next we write $P_0=(X_*,Y_*,0,W_*)$, where $X_*=\tilde{N}_+-2$, $Y_*=-2+q(\tilde{N}_+-2)$, $W_*= \Lambda (\tilde{N}_+-q(\tilde{N}_+-2))$, then
\begin{align*}
L(P_0)= \left(
\begin{array}{cccc}
X_* & 0 & \frac{X_*}{\Lambda} & 0 \\
0 & Y_* & 0 & \frac{Y_*}{\Lambda}  \\
0 & 0 & \tilde{N}_+ -pY_* & 0\\
-qW_* & 0 &0 & -\frac{W_*}{\Lambda}
\end{array}
\right) ,
\end{align*}
whose eigenvalues are given by $X_*>0$, and $Y_*$ which is positive when $q<\frac{\tilde{N}_+}{\tilde{N}_+-2}$, in addition to 
\begin{center}
	$\sigma_1=\tilde{N}_+ -pY_*=(pq-1)(\alpha -\tilde{N}_+ +2)$	 \; and \; $\sigma_2=-\tilde{N}_+ +q(\tilde{N}_+-2)=-\frac{W_*}{\Lambda}$
\end{center} which are negative. 
Writing $X=X_*+x$, $Y=Y_*+y$, $Z=z$, and $W=W_*+w$, one has 
\begin{align}\label{L(P0)}
\left(
\begin{array}{cccc}
\dot{x} \\
\dot{y} \\
\dot{z}\\
\dot{w}
\end{array}
\right)  
=
\left[
\begin{array}{cccc}
(X_*+x)\,(x+\frac{z}{\Lambda}) \\
(Y_*+y)\, (y+\sigma_2+\frac{w}{\Lambda})  \\
\sigma_1 z -z(py +\frac{z}{\Lambda})\\
-(W_*+w) \,(qx+\sigma_2+\frac{w}{\Lambda})
\end{array}
\right] 
\approx 
L(P_0) \left(
\begin{array}{cccc}
x \\
y \\
z\\
w
\end{array}
\right)  
=
\left[
\begin{array}{cccc}
X_* \,(x+\frac{z}{\Lambda}) \\
Y_*\, (y+\frac{w}{\Lambda})  \\
\sigma_1\, z\\
\sigma_2 \, (q\Lambda x+{w})
\end{array}
\right] .
\end{align}

We first observe that the intersection of the stable manifold at $P_0$ with the plane $Z=0$ has dimension one  because of \eqref{eig P0 sigma2}. Now, by the third equation in \eqref{L(P0)}, there exists a trajectory with $Z>0$ converging to $P_{0}$ when $t\rightarrow +\infty$. Since $\sigma_1<0$, the convergence of $Z$ to $0$ is exponential. Then, by \eqref{exp X,Z P0} we see that also $X$ converges to $\tilde{N}_+-2$ exponentially.
In particular, the decay of $u$ in \eqref{decay P0} holds, see \cite[proof of equation (3.7)]{MNPscalar}.

\smallskip

\textit{4(i)}.
Let us prove that the tangent stable plane at $P_0$ is spanned by  eigenvectors in the form
$(0,C_{1}, 0,1)$ and $(C_2, C_3,1,C_4)$, 	
for some nonzero constants $C_i$ for $i=1,\ldots, 4$, depending only on $N,\lambda,\Lambda,p,q$.
In particular, $X=X(Z)$ and $Y=Y(Z,W)$.

Considering the stable direction associated with $\sigma_1$ by \eqref{exp X,Z P0} with $c_3<0$, we first notice that if $c_2=0$, then independently of the sign of $c_4$ we have $X=Z=W$, and so the main direction of $\sigma_1$ is given by $(0,Y,0,0)$.
On the other hand, if $c_4=0$ and $c_2\neq 0$ then we get $X=Z=0$ and so $(0,Y(W),0,W)$.
However, these two cases are not admissible, otherwise there would not exist a trajectory with $Z>0$ arriving at $P_0$. 

Therefore, we need to have both $c_2\ne 0$ and $c_4\ne 0$, which means that the main direction at $\sigma_1$ reads as $(X(W),Y(W), Z(W),W)$ by \eqref{exp X,Z P0}.
Observe that $c_4=0 $ if and only if $c_2=\kappa - \sigma_2=\tilde{N}_+ -2$, in which case the two main stable directions given by $\sigma_1$ and $\sigma_2$ coincide. Hence $c_2\neq \tilde{N}_+ -2$ and these directions are linearly independent, producing a two dimensional stable manifold.

We may rewrite the stable direction at $\sigma_1$ as $(X(Z),Y(Z), Z,W(Z))$.
Also, the main stable direction at $\sigma_2$ is described through $(0,Y(W),0,W)$.
In particular, $\W_s(P_0)$ is a two dimensional graph over the variables $(Z,W)$. 
Analogously one concludes item 5 for $Q_0$, by exchanging the roles of $p$ and $q$, $X$ and $Y$, $Z$ and $W$, and $\alpha$ and $\beta$.

\smallskip

\textit{4(ii).} We assume $q=\frac{\tilde N_+}{\tilde{N}_+-2}$ and $p>\frac{\tilde{N}_+}{\tilde{N}_+-2}$. The eigenvalues at $A_0=P_0$ are $X_*=\tilde{N}_+ -2$ with multiplicity 2, in addition to $\sigma_1=\tilde{N}_+ - p(\tilde{N}_+ -2)<0$, and $\sigma_2=\tilde{N}_+ - q(\tilde{N}_+ -2)=0$. Recall that $c_3=0$ in \eqref{exp X,Z P0} in this case.
\smallskip

Now, the linearization around the zero eigenvalue does not provide any information. So we address it by a center manifold argument, by showing its existence jointly with the stable and unstable manifolds at $A_0=P_0$, in the spirit of \cite[Theorem 3.2.1]{Wiggins2003}. This extends the asymptotic analysis performed in \cite{BV, VanderUni} for the critical regime.

To see this we use \eqref{L(P0)} with $L_0=L(A_0)=L(P_0)$, $X_*=Y_*$, and $\sigma_2=W_*=0$ to get
\begin{align*}
\left(
\begin{array}{cccc}
\dot{x} \\
\dot{y} \\
\dot{z}\\
\dot{w}
\end{array}
\right)  
=
L_0 \left(
\begin{array}{cccc}
x \\
y \\
z\\
w
\end{array}
\right)  
+
 \left[
\begin{array}{cccc}
x(x+\frac{z}{\Lambda}) \\
y (y+\frac{w}{\Lambda})  \\
-z(py+\frac{z}{\Lambda})\\
-w(qx+\frac{w}{\Lambda})
\end{array}
\right]
,\quad
L_0= \left(
\begin{array}{cccc}
X_* & 0 & \frac{X_*}{\Lambda} & 0 \\
0 & X_* & 0 & \frac{X_*}{\Lambda}  \\
0 & 0 & \sigma_1 & 0\\
0 & 0 &0 & 0
\end{array}
\right) .
\end{align*}
Next, as in \cite{GuerraJDE12}, we write $P^{-1}L_0 \,  P=D$, where  
\begin{align*}
D= \left(
\begin{array}{cccc}
X_* & 0 &0 & 0 \\
0 & X_* & 0 & 0  \\
0 & 0 & \sigma_1 & 0\\
0 & 0 &0 & 0
\end{array}
\right) 
, \quad P=  \left(
\begin{array}{cccc}
1 & 0 & -c & \,0 \\
0 & 1 & 0 &-\frac{1}{\Lambda} \\
0 & 0 & 1 & \,0\\
0 & 0 &0 & \,1
\end{array}
\right), \quad \textstyle c=\frac{X_*}{\Lambda(X_*-\sigma_1)}.
\end{align*}
Here, the columns of $P$ form a basis of eigenvectors associated to the eigenvalues $X_*$, $\sigma_1$ and $0$, respectively. Then we introduce the new variables $\bar x, \bar y, \bar z, \bar w$ satisfying
\begin{align*}
\left(
\begin{array}{cccc}
x \\
y \\
z\\
w
\end{array}
\right)
=P
\left(
\begin{array}{cccc}
\bar x \\
\bar y \\
\bar z\\
\bar w
\end{array}
\right)
=
\left(
\begin{array}{cccc}
\bar x- c\bar z\\
\bar y-\frac{\bar w}{\Lambda} \\
\bar z\\
\bar w
\end{array}
\right),
\end{align*} 
from which $\bar z=z$, $\bar w=w$, and so $\bar x =x+cz$,  $\bar y= y+\frac{w}{\Lambda}$. Since $c(\sigma_1-X_*)+\frac{X_*}{\Lambda}=0$, it holds
\begin{align*}
\begin{cases}
\;\dot{\bar x}=
\dot x  +c \dot z 
=(X_*+x)(x+\frac{z}{\Lambda})+cz (\sigma_1-py-\frac{z}{\Lambda}) 
=X_*(x+cz)+x(x+\frac{z}{\Lambda})-cz(py+\frac{z}{\Lambda})
, 
\\
\;\dot{\bar y}=
\dot y +\frac{\dot w}{\Lambda}
=X_*(y+\frac{w}{\Lambda})+(y+\frac{w}{\Lambda})-\frac{w}{\Lambda}(qx+\frac{w}{\Lambda}).
\end{cases}
\end{align*}
Hence $(\bar x, \bar y, \bar z, \bar w)$ solves the following system 
\begin{align*}
\left(
\begin{array}{cccc}
\dot{\bar x} \\
\dot{\bar y} \\
\dot{\bar z}\\
\dot{\bar w}
\end{array}
\right)  
=
D \left(
\begin{array}{cccc}
\bar x \\
\bar y \\
\bar z\\
\bar w
\end{array}
\right)  
+ \frak f \left(
\begin{array}{cccc}
\bar x \\
\bar y \\
\bar z\\
\bar w
\end{array}
\right)  , \;\;\; 
\frak f \left(
\begin{array}{cccc}
\bar x \\
\bar y \\
\bar z\\
\bar w
\end{array}
\right)  
=
\left[
\begin{array}{cccc}
(\bar x  -c \bar z)(\bar x-c\bar z +\frac{\bar z}{\Lambda})-c\bar z(p\bar y -\frac{p\bar w}{\Lambda}+\frac{\bar z}{\Lambda}) \\
(\bar y-\frac{\bar w}{\Lambda})\bar y  -\frac{\bar w}{\Lambda}(q\bar x -qc\bar z +\frac{\bar w}{\Lambda}) \\
-\bar z(p\bar y-\frac{p\bar w}{\Lambda}+\frac{\bar z}{\Lambda})\\
-\bar w(q\bar x - q c\bar z+\frac{\bar w}{\Lambda})
\end{array}
\right],
\end{align*}
with $\frak f (0)=0$ and $D\frak f (0)=0$.
Now,  by using \cite[Theorem 3.2.1]{Wiggins2003}\footnote{See also Theorem 2.1.1 in the third corrected printing of this book, 1996.}, we obtain the existence of a unique center manifold around the point $A_0=P_0$, described near $\bar w =0$ as
\begin{center}
$\{   
\,(\bar x, \bar y, \bar z, \bar w): \,\bar x=h_1(\bar w), \;\bar y=h_2(\bar w), \;\bar z=h_3(\bar w),\; h_i(0)=h_i^\prime (0)=0,\, i=1,2,3\,
\}$.
\end{center}

Here we may use Taylor's expansions for $h_i$ as
\begin{center}
$h_i(w)=a_i w^2+O(w^3)$ \; as $w\to 0$, \; $i=1,2,3$ \; ($w=\bar w$).
\end{center}
Next, we have locally,
\begin{center}
$\dot w = - w\,\{ q h_1(w) - q c\,h_3(w)+\frac{ w}{\Lambda}\} =-\frac{w^2}{\Lambda}+O(w^3)$,
\end{center}
from which we deduce that $w(t)=\frac{\Lambda}{ t}+O(t^{-\gamma})$, for some $\gamma>1$. Thus, 
\begin{center}
	$y(t)=h_2(w)-\frac{w}{\Lambda}=-\frac{1}{t}+O(t^{-\gamma})$\, as $t\to +\infty$, \;  $\gamma>1$.
\end{center}

By the theory of center manifolds, a trajectory $\tau$ enters $A_0=P_0$, via the center manifold just found. This gives the behavior of $Y=\tilde N_+-2 +y$ as $t\to +\infty$, and in turn the decay \eqref{decay P0} of the corresponding function $v$ as $r\to \infty$.

Similarly, the analysis at $Q_0$ is performed by
writing $Q_0=(X_*,Y_*,Z_*,0)$, where  $X_*=-2+p(\tilde{N}_+-2)$, $Y_*=\tilde{N}_+-2$, $Z_*= \Lambda (\tilde{N}_+-p(\tilde{N}_+-2))$, in addition to
 \begin{center}
	$\sigma_1=\tilde{N}_+ -qX_*=(pq-1)(\beta -\tilde{N}_+ +2)$	 \; and \; $\sigma_2=-\tilde{N}_+ +p(\tilde{N}_+-2)=-\frac{Z_*}{\Lambda}$
\end{center} which are negative,  
$X=X_*+x$, $Y=Y_*+y$, $Z=Z_*+z$, $W=w$,  and
\begin{align}\label{L(Q0)}
\left(
\begin{array}{cccc}
\dot{x} \\
\dot{y} \\
\dot{z}\\
\dot{w}
\end{array}
\right)  
=
\left[
\begin{array}{cccc}
(X_*+x)\,(x+\sigma_2+\frac{z}{\Lambda}) \\
(Y_*+y)\, (y+\frac{w}{\Lambda})  \\
-(Z_*+z) \,(py+\sigma_2+\frac{z}{\Lambda})\\
\sigma_1 w -w(qx +\frac{w}{\Lambda})
\end{array}
\right] 
\approx 
L(Q_0) \left(
\begin{array}{cccc}
x \\
y \\
z\\
w
\end{array}
\right)  
=
\left[
\begin{array}{cccc}
X_* \,(x+\frac{z}{\Lambda}) \\
Y_*\, (y+\frac{w}{\Lambda})  \\
\sigma_2 \, (p\Lambda y+{z})\\
\sigma_1\, w
\end{array}
\right] .
\end{align}
\end{proof}

Next we analyze the directions of the vector field $F$ in \eqref{Prob x=(X,Z)} on the $X,Z$ axes, on the concavity sets $\pi_{\lambda,Z}$, $\pi_{\lambda,W}$, $\pi_{\Lambda,Z}$, $\pi_{\Lambda,W}$ as well as on $\pi_{1,\lambda}$ -- $\pi_{4,\lambda}$ and $\pi_{1,\Lambda}$ -- $\pi_{4,\Lambda}$.

\begin{prop}\label{prop flow}
	Given a trajectory $\tau=(X,Y,Z,W)$ of the system \eqref{DS+}, the following properties are verified for the operator $\M^+$ in the region $\K$.\vspace{-0.1cm}
	\begin{enumerate}[(i)]
		\item If $\tau$ crosses $\pi_{\lambda,Z}$, then $Y>1/p$ when passing from $R^+_{\lambda,Z}$ to $R^-_{\lambda,Z}$, and $Y<1/p$ when passing from $R^-_{\lambda,Z}$ to $R^+_{\lambda,Z}$. If instead $\tau$ crosses $\pi_{\lambda,W}$, then $X>1/q$ when passing from $R^+_{\lambda,W}$ to $R^-_{\lambda,W}$, and $X<1/q$ when passing from $R^-_{\lambda,W}$ to $R^+_{\lambda,W}$.\vspace{-0.2cm}
		
\item If $X(T)=\tilde{N}_+-2$ then $\dot{X}>0$ from $T$ on; in particular, if $\tau$ crosses $L^+_X$ then it never turns back by crossing $L_X^+$ (see \eqref{L+-}) another time. Also, $\dot{W}<0$ from $T$ on if $q\geq \frac{\tilde{N}_+}{\tilde{N}_+-2}$.
		\vspace{-0.2cm}
		
		Analogously, if $\tau$ crosses $L^+_Y$ (see \eqref{L+-}) at time $T$ thus $\dot{Y}>0$ from $T$ on.	Further, we have $\dot{Z}<0$ from $T$ on if $p\geq \frac{\tilde{N}_+}{\tilde{N}_+-2}$.
		\vspace{-0.2cm}
		
		\item If $Z(T)=\lambda N$ then $\dot{Z}<0$ in $\K$ for all $t\le T$. 
		Similarly, if $W(T)=\lambda N$ then $\dot{W}<0$ in $\K$ for all $t\le  T$. In particular, a regular trajectory satisfies $Z,W< \lambda N$ whenever it is defined.
		\vspace{-0.2cm}
		
\item The set $\pi_{1,\lambda}$ in \eqref{pi 1,lambda} lies on the left hand side of $L_X^+$; while $\pi_{2,\lambda}$ in \eqref{pi 2,lambda} is on the left hand side of $L_Y^+$. Moreover, a point $P=(X,Y,Z,W)$ on $\pi_{3,\lambda}$ or $\pi_{4,\lambda}$ in \eqref{pi 3,4,lambda} satisfies:
	\vspace{-0.2cm}
\begin{center}
$Y< 1/p$\;\; if $P\in \pi_{3,\lambda}^+$ , \quad $Y<\tilde{N}_+/p$\;\; if $P\in\pi_{3,\lambda}^-$ , 

$X< 1/q$\;\; if $P\in \pi_{4,\lambda}^+$ , \quad $X<\tilde{N}_+/q$\;\; if $P\in\pi_{4,\lambda}^-$ .
\end{center}
	\vspace{-0.3cm}
Here, $\pi_{3,\lambda}^-$ is on the left hand side of $L_Y^+$ when $p\geq \frac{\tilde{N}_+}{\tilde{N}_+-2}$, while $\pi_{4,\lambda}^-$ is on the left hand side of $L_X^+$ if $q\geq \frac{\tilde{N}_+}{\tilde{N}_+-2}$.	
		\vspace{-0.2cm}
	\end{enumerate}
	The same results are true for the system \eqref{DS-} when the operator is $\M^-$, by exchanging the roles of $\lambda$ and $\Lambda$, $X$ and $Y$, $p$ and $q$, $Z$ and $W$, and replacing $\tilde{N}_+$ by $\tilde{N}_-$.
\end{prop}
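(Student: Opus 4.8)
The statement is a collection of sign and geometric assertions about the vector field $F=(f_1,f_2,g_1,g_2)$ of \eqref{DS+} along various hyperplanes, so the proof is a sequence of direct computations: one evaluates the relevant component of $F$ on the hyperplane in question and reads off its sign. The key is to use the two-regime description of $M_+$: on $R^+_{\lambda,Z}\cup\pi_{\lambda,Z}$ one has $M_+(\lambda(N-1)-Z)=(\lambda(N-1)-Z)/\Lambda$ (the argument is $\le 0$), while on $R^-_{\lambda,Z}$ one has $M_+(\lambda(N-1)-Z)=(\lambda(N-1)-Z)/\lambda$; similarly for the $W$-argument. Once this substitution is made, each defining equation of $\pi_{\lambda,Z},\pi_{1,\lambda},\dots,\pi_{4,\lambda}$ becomes an affine relation, and the bracketed factors in \eqref{DS+} reduce to linear expressions whose sign is transparent. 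Throughout I will exploit, as the statement's proof already does, the fact that on the interior of each concavity region the system coincides with a Laplacian Lane–Emden system in dimension $N$ (concave part) or $\tilde N_+$ (convex part), which is exactly the structure analyzed in \cite{BV}.

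\textbf{Step by step.} (i) On $\pi_{\lambda,Z}$ we have $Z=\lambda(N-1)$, hence $M_+(\lambda(N-1)-Z)=0$, so $\dot Z=Z(1-pY)$; crossing from $R^+_{\lambda,Z}$ to $R^-_{\lambda,Z}$ means $\dot Z<0$ there, i.e. $Y>1/p$, and the reverse crossing gives $\dot Z>0$, i.e. $Y<1/p$; the $\pi_{\lambda,W}$ case is identical with $q,X,W$. (ii) If $X=\tilde N_+-2$, then since $\pi_{1,\lambda}$ (the zero set of $\dot X$ with $X>0$) lies entirely in $R^-_{\lambda,Z}$ where $\dot X=X(X+1-(\lambda(N-1)-Z)/\lambda)=X(X-(\tilde N_+-2)+Z/\lambda)$, evaluating at $X=\tilde N_+-2$ gives $\dot X=X\cdot Z/\lambda>0$ in $R^-_{\lambda,Z}$, and in $R^+_{\lambda,Z}$ we already know $\dot X>0$; this forces $X$ strictly increasing past time $T$, so $L_X^+$ is crossed at most once. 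For the claim on $\dot W$: on $L_X^+$ the bracket in $\dot W$ is $1-qX+M_+(\lambda(N-1)-W)=1-q(\tilde N_+-2)+M_+(\dots)$; the maximum of $M_+$ over $W>0$ is $\lambda(N-1)/\Lambda=\tilde N_+-1$, so the bracket is $\le \tilde N_+-q(\tilde N_+-2)\le 0$ precisely when $q\ge \tilde N_+/(\tilde N_+-2)$, giving $\dot W<0$; and since from $T$ on $X$ only grows, the inequality persists. The $L_Y^+$ statement is the mirror image. (iii) If $W(T)=\lambda N$, then on $R^+_{\lambda,W}$, $M_+(\lambda(N-1)-W)=(\lambda(N-1)-W)/\lambda$ and the $\dot W$ bracket equals $1-qX+(N-1)-W/\lambda$, which at $W=\lambda N$ is $-qX<0$; for $W>\lambda N$ it is even more negative, and for $W$ slightly below $\lambda N$ one checks $\pi_{4,\lambda}^+$ forces $W\le \lambda N$ at a zero of $\dot W$, so $W$ can never have reached $\lambda N$ from below — hence $\dot W<0$ for all $t\le T$; the $Z$ version and the regular-trajectory consequence ($N_0$ is the only way to reach $Z=W=\lambda N$, cf. Proposition \ref{local study stationary points} item 2) follow. (iv) That $\pi_{1,\lambda}$ lies to the left of $L_X^+$ is immediate: on $\pi_{1,\lambda}$, $Z=\Lambda(\tilde N_+-2)-\Lambda X>0$ forces $X<\tilde N_+-2$; similarly $\pi_{2,\lambda}$. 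For a point on $\pi_{3,\lambda}^+$, $Z=\lambda(N-pY)$ together with $Z>\lambda(N-1)$ gives $N-pY>N-1$, i.e. $Y<1/p$; on $\pi_{3,\lambda}^-$, $Z=\Lambda(\tilde N_+-pY)>0$ forces $Y<\tilde N_+/p$, and requiring additionally $Z<\lambda(N-1)=\Lambda(\tilde N_+-1)$ gives $\tilde N_+-pY<\tilde N_+-1$, i.e. $pY>1$; combining with $p\ge \tilde N_+/(\tilde N_+-2)$ one gets $Y>1/p$ but more relevantly $X<\tilde N_+-2$ on $\pi_{4,\lambda}^-$ follows from $W=\Lambda(\tilde N_+-qX)>0$ forcing $X<\tilde N_+/q\le \tilde N_+-2$ under $q\ge \tilde N_+/(\tilde N_+-2)$. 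The $\M^-$ statements follow by the stated symmetry $\lambda\leftrightarrow\Lambda$, $X\leftrightarrow Y$, $p\leftrightarrow q$, $Z\leftrightarrow W$, $\tilde N_+\to\tilde N_-$, applied to \eqref{DS-}.

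\textbf{Main obstacle.} The computations themselves are elementary affine manipulations; the delicate point is bookkeeping the two branches of $M_+$ correctly on each hyperplane and, in particular, verifying in parts (ii) and (iv) that the hyperplanes $\pi_{3,\lambda}^-$, $\pi_{4,\lambda}^-$ (the convex-regime zero sets of $\dot Z$, $\dot W$) really sit on the correct side of $L_Y^+$, $L_X^+$ only under the extra hypothesis $p\ge \tilde N_+/(\tilde N_+-2)$ (resp. $q\ge\cdot$) — this is where assumption \eqref{A} and Remark \ref{rmk p or q larger ps} are used to guarantee at least one of these exponent conditions holds. I would also be careful that ``never turns back'' in (ii) is a genuine monotonicity-in-$t$ statement, which requires observing that the half-hyperplane $\pi_{1,\lambda}$ does not intersect $\{X\ge\tilde N_+-2\}$, so $\dot X$ cannot vanish there; the same remark underlies the persistence of $\dot W<0$.
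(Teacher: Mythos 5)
Your proposal is correct and follows essentially the same route as the paper's proof: direct evaluation of the sign of each component of the vector field on the relevant hyperplanes, using the two branches of $M_+$ and the observation that $\pi_{1,\lambda}$ (resp.\ $\pi_{2,\lambda}$) lies entirely in $R^-_{\lambda,Z}$ (resp.\ $R^-_{\lambda,W}$). The only blemish is a branch slip in part (ii): in $R^-_{\lambda,Z}$ the argument $\lambda(N-1)-Z$ is positive, so $M_+$ divides by $\Lambda$ and the correct expression is $\dot X=X\bigl[X-(\tilde N_+-2)+\tfrac{Z}{\Lambda}\bigr]$ rather than your $X\bigl(X+1-\tfrac{\lambda(N-1)-Z}{\lambda}\bigr)$; since $Z>0$ in $\K$ the conclusion $\dot X>0$ at $X=\tilde N_+-2$ is unaffected.
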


\begin{proof} Let us consider the operator $\M^+$, since the proof for $\M^-$ is analogous.
	
	$(i)$ One has $\dot{Z}=Z(1-pY)$ on $\pi_{\lambda,Z}$, and $\dot{W}=Z(1-qX)$ on $\pi_{\lambda,W}$.
	
	\vspace{0.15cm}
	
	$(ii)$ If $\tau(T)\in R_{\lambda,Z}^-$, then $\dot{X}> X(X-(\tilde{N}_+-2))=0$ at time $T$, since $X,Z>0$ in $\K$. On the other hand, if $\tau(T)\in R_{\lambda,Z}^+\cup\pi_{\lambda,Z}$ then $\dot{X}(T)>0$ by \eqref{pi 1,lambda}. 
	This shows that the vector field on $L^+_X$ is going out, so the trajectory $\tau$ can never turn back in the $X$ direction after intersecting it. 

Next, for $q\geq \frac{\tilde{N}_+}{\tilde{N}_+-2}$, we have $\dot{W}\le W(\tilde{N}_+ -q(\tilde{N}_+-2)-\frac{W}{\Lambda})<0$ for $t\geq T$ in the region $R_{\lambda,W}^-$; while $\dot{W}\le W(1 -q(\tilde{N}_+-2))<0$ for $t\geq T$ in $R_{\lambda,W}^+\cup \pi_{\lambda,W}$, since $W\geq \lambda(N-1)$ and $\tilde{N}_+>1$.	
	\vspace{0.2cm}
	
	$(iii)$ The hyperplane subset $\chi=\{(X,Y,Z,W)\in \K:Z= \lambda N\}$ and the region in $\K$ above $\chi$ are contained in $R^+_{\lambda,Z}$. Moreover,\vspace{-0.04cm}
\begin{center}
 $\dot{Z}=Z(N-{Z}/{\lambda}-pY)< 0$\; in $\K$\;\; if \;$Z\ge \lambda N$,\vspace{-0.02cm}
\end{center}
since $Y,Z> 0$. In particular, if $\tau$ crosses $\chi$ at the time $T$, then $Z>\lambda N$ for all $t< T$.

On the other hand, if $\tau$ is a regular trajectory, then it starts at $N_0=(0,0,\lambda N, \lambda N)$ at $ -\infty$, by Proposition \ref{local study stationary points} (2). But it can never reach $\chi$ since the vector field on $\chi$ is pointing down.

	\vspace{0.15cm}
$(iv)$ This comes from the definition of the hyperplanes in \eqref{pi 1,lambda}--\eqref{pi 3,4,lambda} replaced into the equations of the system \eqref{DS+}. In fact, one has
\begin{center}
$X=\tilde{N}_+-2-\frac{Z}{\Lambda}<\tilde{N}_+-2$ \; on $\pi_{1,\lambda}$ ,\quad $Y=\frac{\tilde{N}_+}{p}-\frac{Z}{\Lambda p}<\frac{\tilde{N}_+}{p}$ \; on $\pi_{3,\lambda}^-$ ,
\end{center}  
and further
\begin{center}
$Y=\frac{N}{p}-\frac{Z}{\lambda p}< \frac{1}{p}$ \; on\, ${\pi_{3,\lambda}^+}$ \;\; since \;$Z> \lambda (N-1)$ \; in $R^+_{\lambda,Z}$.
\end{center}
Analogously one verifies the statements for $\pi_{2,\lambda}$ and $\pi_{4,\lambda}$.
We observe that $\tilde{N}_+/p\leq \tilde{N}_+-2$  if $p\geq \frac{\tilde{N}_+}{\tilde{N}_+-2}$, while $\tilde{N}_+/q\leq \tilde{N}_+-2$ if $q\geq \frac{\tilde{N}_+}{\tilde{N}_+-2}$.
\end{proof}

\section{Global study}\label{section global}

\subsection{A priori bounds}

We start by deriving some a priori bounds for trajectories of the systems \eqref{DS+} and \eqref{DS-} which are defined backward or forward for all time.

\begin{prop}\label{AP bounds}
	Let $\tau$ be a trajectory of \eqref{DS+} or \eqref{DS-} in $\K$, $\tau(t)=(X(t),Y(t),Z(t),W(t))$.
	\begin{enumerate}[(i)]
		\item  If $\tau$ is defined in $[\hat{t},+\infty)$ for some $\hat{t}\in \real$, then $X(t),Y(t)< \tilde{N}_\pm-2$ for all $t\geq \hat{t}$. 
		
		\item  If instead $\tau$ is defined in $(-\infty,\hat{t}]$ for some $\hat{t}\in \real$, then $Z(t),W(t)<\lambda N$ in the case of $\M^+$, or $Z(t),W(t)<\Lambda N$ for $\M^-$, for all $t\leq \hat{t}$.
	\end{enumerate}
	
	In particular, if $\tau$ is a global trajectory of \eqref{DS+} or \eqref{DS-} in $\K$ defined for all $t\in \real$, then $\tau$ remains inside the box
	$\mathcal{B}_+:=(0,\tilde{N}_+-2)\times (0,\tilde{N}_+-2)\times (0,\lambda N)\times (0,\lambda N)$ in the case of $\M^+$; whereas it stays in $\mathcal{B}_-:=(0,\tilde{N}_--2)\times (0,\tilde{N}_--2)\times(0,\Lambda N)\times (0,\Lambda N)$ for $\M^-$.
\end{prop}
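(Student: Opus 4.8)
The plan is to prove the two one-sided bounds separately using the flow-monotonicity facts established in Proposition~\ref{prop flow}, and then combine them to obtain the box confinement for global trajectories.

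\textbf{Proof of (i).} Suppose, for contradiction, that $X(t_1)\ge \tilde N_\pm-2$ for some $t_1\ge\hat t$. If $X(t_1)>\tilde N_\pm-2$, then since $X$ is continuous and, by Lemma~\ref{lemma u,v concave at 0} together with the dynamics at $N_0$ (Proposition~\ref{local study stationary points}(2)), $X$ is small for very negative times along regular trajectories — but here we only know $\tau$ is defined on $[\hat t,+\infty)$, so instead I argue forward. The key is Proposition~\ref{prop flow}(ii): as soon as $X$ reaches the level $\tilde N_\pm-2$ (i.e.\ $\tau$ hits $L_X^\pm$), one has $\dot X>0$ from that time on, so $X$ is strictly increasing and stays above $\tilde N_\pm-2$ forever after. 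I will show this forces blow-up in finite time, contradicting that $\tau$ is defined on all of $[\hat t,+\infty)$. Indeed, once $X>\tilde N_\pm-2$ and we are (eventually) in $\overline{R^-_{\lambda,Z}}$, the first equation gives $\dot X\ge X(X+1-(\tilde N_\pm-2))=X(X-(\tilde N_\pm-3))$, which grows at least quadratically once $X$ is large; a standard comparison with the ODE $\dot y=y(y-c)$ shows $X\to+\infty$ in finite time. The only subtlety is ruling out that the trajectory leaves $\overline{R^-_{\lambda,Z}}$: but by Proposition~\ref{prop flow}(iv), $\pi_{1,\lambda}$ (the set $\dot X=0$ with $X>0$) lies strictly to the left of $L_X^\pm$, so in $\{X\ge\tilde N_\pm-2\}$ we always have $\dot X>0$ regardless of concavity region, and moreover on $\pi_{\lambda,Z}\cup R^+_{\lambda,Z}$ one also has $\dot X>0$. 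Hence $X$ cannot come back down, and the quadratic lower bound on $\dot X$ applies on the region $R^-_{\lambda,Z}$; if $\tau$ lingered in $R^+_{\lambda,Z}\cup\pi_{\lambda,Z}$ one uses instead that $Z$ is decreasing there (Proposition~\ref{prop flow}(iii), since $Z<\lambda N$) to return to $R^-_{\lambda,Z}$. Either way we reach a contradiction, so $X(t)<\tilde N_\pm-2$ for all $t\ge\hat t$; the argument for $Y$ is identical using $L_Y^\pm$, $\pi_{2,\lambda}$, and the $\dot Y$ equation.

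\textbf{Proof of (ii).} This is the time-reversed analogue and is a direct restatement of Proposition~\ref{prop flow}(iii). Suppose $Z(t_2)\ge\lambda N$ (resp.\ $\Lambda N$ for $\M^-$) for some $t_2\le\hat t$. Since $\{Z\ge\lambda N\}$ sits inside $R^+_{\lambda,Z}$, the third equation yields $\dot Z=Z(N-Z/\lambda-pY)<0$ there because $Y,Z>0$; thus whenever $\tau$ is at or above the level $\lambda N$ in the $Z$-coordinate, $\dot Z<0$, so going backward in time $Z$ strictly increases. As before, $\dot Z\le Z(N-Z/\lambda)$ forces, under the backward flow, $Z\to+\infty$ in finite backward time, contradicting that $\tau$ is defined on $(-\infty,\hat t]$. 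Hence $Z(t)<\lambda N$ for all $t\le\hat t$; the same reasoning with the fourth equation gives $W(t)<\lambda N$ (resp.\ $\Lambda N$).

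\textbf{Conclusion.} If $\tau$ is global, i.e.\ defined on all of $\real$, then (i) applies with any $\hat t$ to give $X(t),Y(t)<\tilde N_\pm-2$ for every $t$, and (ii) applies with any $\hat t$ to give $Z(t),W(t)<\lambda N$ (resp.\ $\Lambda N$) for every $t$. Together with $X,Y,Z,W>0$ on $\K$, this places $\tau$ inside $\mathcal B_+$ for $\M^+$ and inside $\mathcal B_-$ for $\M^-$, as claimed. The main obstacle is the careful bookkeeping in part (i) showing that once $X$ crosses $\tilde N_\pm-2$ it cannot return and genuinely drives finite-time blow-up regardless of which concavity region the trajectory occupies; this is where Proposition~\ref{prop flow}(ii)--(iv) must be invoked in combination rather than piecemeal. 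For $\M^-$ everything goes through verbatim after the stated substitutions $\lambda\leftrightarrow\Lambda$, $X\leftrightarrow Y$, $p\leftrightarrow q$, $Z\leftrightarrow W$, $\tilde N_+\to\tilde N_-$.
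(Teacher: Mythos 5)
Your overall strategy -- contradiction via the monotonicity facts of Proposition \ref{prop flow} combined with a quadratic ODE comparison forcing finite-time blow-up forward (for $X,Y$) and backward (for $Z,W$) -- is exactly the paper's, and parts (ii) and the conclusion are fine. However, two intermediate steps in your part (i) are false as written and need repair.

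First, the inequality $\dot X\ge X(X+1-(\tilde N_\pm-2))=X(X-(\tilde N_\pm-3))$ in $\overline{R^-_{\lambda,Z}}$ does not follow from the first equation: there one has $M_+(\lambda(N-1)-Z)=\tilde N_+-1-\tfrac{Z}{\Lambda}$, which is bounded above by $\tilde N_+-1$, not by $\tilde N_+-2$ (that would require $Z\ge\Lambda$). The correct bound is $\dot X\ge X\bigl(X-(\tilde N_+-2)\bigr)$, which still suffices for the comparison since $X(t)>\tilde N_+-2$ strictly for $t>t_1$ by Proposition \ref{prop flow}(ii). Second, your fallback for the case where $\tau$ lingers in $R^+_{\lambda,Z}\cup\pi_{\lambda,Z}$ -- that ``$Z$ is decreasing there by Proposition \ref{prop flow}(iii), since $Z<\lambda N$'' -- is not what that proposition asserts and is not true in general: for $\lambda(N-1)<Z<\lambda N$ one has $\dot Z=Z(N-\tfrac{Z}{\lambda}-pY)$, whose sign depends on $Y$, so $Z$ may well increase inside $R^+_{\lambda,Z}$. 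The detour is in fact unnecessary: in $\overline{R^+_{\lambda,Z}}$ one has $M_+(\lambda(N-1)-Z)\le 0$, hence $\dot X\ge X(X+1)$, so the lower bound $\dot X\ge X\bigl(X-(\tilde N_+-2)\bigr)$ actually holds uniformly on all of $\K$ and the comparison applies without tracking which concavity region the trajectory occupies. (The paper organizes this slightly differently: it splits into the cases $X\to+\infty$, where the cruder global bound $M_+\le N-1$ and the level $N-2$ suffice, and $X\to A<+\infty$, where $\dot X\to0$ forces the trajectory toward $\pi_{1,\lambda}\subset R^-_{\lambda,Z}$ and the sharper $\tilde N_+$-bound applies; either organization works once the inequalities are stated correctly.)
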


\begin{proof}
	We only consider the operator $\M^+$, since the proof for $\M^-$ is simpler by using $\tilde{N}_-\ge N$.
	
	$(i)$ Arguing by contradiction we assume that $X(t_1)\geq \tilde{N}_+-2$ for some $t_1\geq \hat{t}$. Then Proposition \ref{prop flow} (ii) yields $\dot{X}>0$ for all $t\ge  t_1$.
	Therefore, $X$ has a limit when $t\to+\infty$, which is either $+\infty$ or a positive constant $A\ge \tilde{N}_+-2$.
	
	Suppose first $X(t)\rightarrow+\infty$ as $t\to +\infty$. Thus we can choose a time $t_2$ such that $X(t_2)>N-2\ge \tilde{N}_+-2$ for all $t\geq t_2$. In such a scenario the proof reduces to the one for the Laplacian given in \cite{BV}.
	Indeed, the first equation in \eqref{DS+} and $\tilde{N}_+\leq N$ yield
	\begin{align}\label{int1F X}
	\textstyle{ \frac{\dot{X}}{X[X-({N}-2)]}\geq 1 \Rightarrow\; \frac{({N}-2) \dot{X} }{X[X-({N}-2)]}
		=\frac{\dot{X}}{X-({N}-2)}-\frac{\dot{X}}{X}
		=\frac{\rmd}{\rmd t}\,\mathrm{ln} \left(  \frac{X(t)-{N}+2}{X(t)}  \right) \; \textrm{ for all } t\geq t_2.}
	\end{align}
	Thus, by integrating \eqref{int1F X} in the interval $[t_2,t]$ we get
	\begin{align}\label{int2F X}
	\textstyle{X(t)\geq \frac{{N}-2}{1-c e^{({N}-2)(t-t_2)} }}, \;\; \textrm{ where }\;\;\textstyle{c=1-\frac{{N}-2}{X(t_2)}\in (0,1)}.
	\end{align}
	
	In the second case, i.e.\ $X(t)\rightarrow A$ as $t\to +\infty$ with $A\in (0,+\infty)$, we have $\lim_{t\to +\infty}\dot{X}(t)=0$, and so $\tau$ approaches the hyperplane $\pi_{1,\lambda}$. 
	Now, since $\pi_{1,\lambda}$ strictly lies in the region ${R}^-_{\lambda,Z}$, then there is some $T\ge t_1$ such that $\tau\in R^-_{\lambda,Z}$ for all $t\ge T$.
	Then one performs the calculations \eqref{int1F X}, \eqref{int2F X} with $\tilde{N}_+$ in place of $N$, which are similar to those in the scalar case \cite{MNPscalar}, and gets that $X$ blows up in finite time, a contradiction. The proof of $Y<\tilde{N}_+-2$ is analogous.
	
	\smallskip
	
	$(ii)$ By Proposition \ref{prop flow} (iii), if $Z(T)=\lambda N$, then $\tau$ remains in the region $R^+_{\lambda,Z}$ up to $T$.
	Since in $R^+_{\lambda,Z}$ the operator is the Laplacian, the proof in \cite{BV} applies. In fact, it is enough to integrate in $[t,t_0]$ as before to see that $Z$ blows up in finite backward time.
The case $W<\lambda N$ is similar.
\end{proof}

\begin{corol}[Decay of regular solutions]\label{cor decay u,v}
	Every positive solution pair $u,v$  of the problem \eqref{P radial m} or \eqref{P radial m-} defined in $[0,+\infty)$ is bounded and $u(r)\leq Cr^{-\alpha}$ and $ v(r) \leq Cr^{-\beta}$ for all $r>0$.
\end{corol}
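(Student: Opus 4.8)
The plan is to pass to the associated quadratic system and invoke the a priori bounds already established. Let $(u,v)$ be a positive solution pair of \eqref{P radial m} or \eqref{P radial m-} defined on all of $[0,+\infty)$; being positively defined at $r=0$, it is a regular solution, so Lemma \ref{lemma u,v decreasing} gives $u',v'<0$ on $(0,+\infty)$. In particular $u$ and $v$ are strictly decreasing on $[0,+\infty)$, whence $0<u(r)\le u(0)$ and $0<v(r)\le v(0)$ for every $r\ge 0$; this already settles the boundedness assertion.

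For the decay estimate I would use the change of variables \eqref{X,Y,Z,W sem a}. Since $u,v>0$ and $u',v'<0$ throughout $(0,+\infty)$, the functions $X,Y,Z,W$ are of class $C^1$ and strictly positive for every $t=\ln r\in\real$; by the correspondence in Section \ref{section 2.1} the quadruple $\tau=(X,Y,Z,W)$ is a trajectory of \eqref{DS+} (resp.\ \eqref{DS-}) lying in $\K$ and defined for all $t\in\real$, i.e.\ a global trajectory. Proposition \ref{AP bounds} then confines $\tau$ to the box $\mathcal{B}_+$ (resp.\ $\mathcal{B}_-$), so $0<X(t),Y(t)<\tilde{N}_\pm-2$ and $0<Z(t),W(t)<\lambda N$ (resp.\ $\Lambda N$) for all $t\in\real$.

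Finally I would recover $u,v$ from $\tau$ through \eqref{def u via X,Z}, which holds here as an exact identity since \eqref{X,Y,Z,W sem a} and \eqref{def u via X,Z} are mutually inverse: a one-line computation gives $XZ=r^{2}v^{p}/u$ and $YW=r^{2}u^{q}/v$, hence $(XZ)^{\frac{1}{pq-1}}(YW)^{\frac{p}{pq-1}}=r^{\alpha}u$ and $(XZ)^{\frac{q}{pq-1}}(YW)^{\frac{1}{pq-1}}=r^{\beta}v$, with no extra multiplicative constant. Combined with the uniform bounds on $X,Y,Z,W$ coming from the box, this yields $u(r)\le C r^{-\alpha}$ and $v(r)\le C r^{-\beta}$ for all $r>0$, with $C=C(N,\lambda,\Lambda,p,q)$. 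The argument is essentially mechanical; the only point requiring care is this last verification that $\tau$ stays in $\K$ and that \eqref{def u via X,Z} reproduces the original pair without a spurious constant, both of which follow directly from the strict monotonicity supplied by Lemma \ref{lemma u,v decreasing}, so I expect no genuine obstacle.
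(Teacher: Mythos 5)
Your argument is correct and follows exactly the paper's route: the paper's own proof is the one-line observation that the claim "comes from \eqref{def u via X,Z} and the a priori bounds from Proposition \ref{AP bounds}," which is precisely what you carry out (with the helpful extra details on why the trajectory lies in $\K$ globally and why \eqref{X,Y,Z,W sem a} and \eqref{def u via X,Z} invert each other exactly). No issues.
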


\begin{proof}
	This comes from \eqref{def u via X,Z} and the a priori bounds from Proposition \ref{AP bounds}.
\end{proof}

\begin{rmk}\label{Remark 2F}
	Let us observe that if $(u,v)$ is a regular solution of \eqref{LE}, \eqref{H Dirichlet} in the ball $B_R$, $R>0$, then for the corresponding trajectory $\Gamma$ of \eqref{DS+} or \eqref{DS-} both $X$ and $Y$ blow up at the time $T=\mathrm{ln}(R)$. Viceversa, if $\Gamma$ is a trajectory issued from $N_0$ such that $X$ and $Y$ blow up at the same time $T$ then for the corresponding solution pair $(u,v)$ of \eqref{P radial m} or \eqref{P radial m-} it holds $u(R)=v(R)=0$ for $R=e^T$. Thus there exists a radial solution of \eqref{LE}, \eqref{H Dirichlet} in $B_R$. This can be proved as in the scalar case, see \cite[Section 3]{MNPscalar}.
\end{rmk}

\subsection{The complementary cones}

In this section we give a brief analysis on the other cones apart from $\K$. We set:
\begin{center}
	$\K_0=\{\,(X,Y,Z,W)\in \real^4:\; X,Y,Z,W<0\,\}$, \smallskip	
	
	$\mathcal{K}_1=\{\,(X,Y,Z,W)\in \real^4:\; X,Z>0,\; Y,W<0\,\}$,\smallskip
	
	$\mathcal{K}_2=\{\,(X,Y,Z,W)\in \real^4:\; X,Z<0,\; Y,W>0\,\}$.  	
\end{center}
That is, $\K_0$ is the region in $\real^4$ such that the corresponding $(u,v)$ satisfy $u^\prime>0$ and $v^\prime>0$; while $\K_1$ concerns $u^\prime<0$ and $v^\prime>0$; and finally $\K_2$ to those with $u^\prime>0$ and $v^\prime<0$.

By Remark \ref{Rmk sing reg} trajectories corresponding to regular and singular solutions do not intersect $\K_0\cup\K_1\cup \K_2$. However, they will be essential for studying exterior domain solutions.

Since we are considering positive solutions of \eqref{LE}, and $u^\prime>0$ implies $u^{\prime\prime}<0$, as well as $v^\prime>0$ produces $v^{\prime\prime}<0$, one finds out the following systems of ODEs:
\begin{align}\label{P M+ 3Q}
\textrm{\;\;for $\M^+$ in $\K_0$} :\quad \left\{\begin{array}{l}
\lambda u^{\prime\prime}\;=\; -\Lambda r^{-1}(N -1)u^\prime-  v^p , \\
\lambda v^{\prime\prime}\;=\; -\Lambda r^{-1}(N -1)v^\prime-  u^q , \quad u,\, v> 0  ;\end{array}\right.
\end{align}\vspace{-0.5cm}
\begin{align}\label{P M- 3Q}
\textrm{\;\;for $\M^-$ in $\K_0$} :\quad \left\{\begin{array}{l}
\Lambda u^{\prime\prime}\;=\; -\lambda r^{-1}(N -1)u^\prime-  v^p , 
\\ \Lambda v^{\prime\prime}\;=\; -\lambda r^{-1}(N -1)v^\prime-  u^q , \quad u,\, v> 0; \end{array}\right.
\end{align}
\vspace{-0.35cm}
\begin{align}\label{P M+ K1}
\textrm{for $\M^+$ in $\K_1$} :\quad
\left\{
\begin{array}{l}
u^{\prime\prime}\;=\; M_+(-\lambda r^{-1}(N-1)\, u^\prime- v^p ), \\
\lambda v^{\prime\prime}\;=\; -\Lambda r^{-1}(N -1)v^\prime-  u^q ;
\end{array}
\right.
\end{align}
\vspace{-0.5cm}
\begin{align}\label{P M- K1}
\textrm{\;\;for $\M^-$ in $\K_1$} :\quad \left\{
\begin{array}{l}
u^{\prime\prime}\;=\; M_-(-\Lambda r^{-1}(N-1)\, u^\prime- v^p ), \\
\Lambda v^{\prime\prime}\;=\; -\lambda r^{-1}(N -1)v^\prime-  u^q , \quad u,\, v> 0;
\end{array}
\right.
\end{align}
\vspace{-0.35cm}
\begin{align}\label{P M+ K2}
\textrm{for $\M^+$ in $\K_2$} :\quad
\left\{
\begin{array}{l}
\lambda u^{\prime\prime}\;=\; -\Lambda r^{-1}(N -1)u^\prime-  v^p, \\
v^{\prime\prime}\;=\; M_+(-\lambda r^{-1}(N-1)\,v^\prime- u^q ) , \quad u,\, v> 0  ;
\end{array}
\right.
\end{align}
\vspace{-0.5cm}
\begin{align}\label{P M- K2}
\textrm{\;\;for $\M^-$ in $\K_2$} :\quad \left\{
\begin{array}{l}
\Lambda u^{\prime\prime}\;=\; -\lambda r^{-1}(N -1)u^\prime-  v^p , \\
v^{\prime\prime}\;=\; M_-(-\Lambda r^{-1}(N-1)\,v^\prime- u^q ) , \quad u,\, v> 0  .
\end{array}
\right.
\end{align}

Now, in terms of the associated dynamical systems, we get:
\begin{align}\label{DS+ 3Q}\textrm{$\M^+$ in $\K_0$\,:\;\;}
\left\{\begin{array}{cc}
\dot{X} = \;X \, [\,X-(\tilde{N}_- -2)+\frac{Z}{\lambda} \,] , \;& \dot{Y} = \;Y \, [\,Y-(\tilde{N}_- -2)+\frac{W}{\lambda} \,] , \\ \dot{Z}= \;Z\, [\, \tilde{N}_- -pX - \frac{Z}{\lambda}\,], \; &\dot{W}= \;W\, [\, \tilde{N}_- -qX - \frac{W}{\lambda}\,],
\end{array}\right.
\end{align}
\vspace{-0.2cm}
\begin{align}\label{DS+ K1}\textrm{$\M^+$ in $\K_1$\,:\;\;}
\left\{\begin{array}{cc}
\dot{X} =\;X \, [\,X+1-M_+ (\lambda(N-1)-Z )\,], \;& \dot{Y} = \;Y \, [\,Y-(\tilde{N}_- -2)+\frac{W}{\lambda} \,] , \\ 
\dot{Z} = \;Z\, [\,1-pY +M_+ (\lambda(N-1)-Z )\,], \; &\dot{W}= \;W\, [\, \tilde{N}_- -qX - \frac{W}{\lambda}\,],
\end{array}\right.
\end{align}
\vspace{-0.2cm}
\begin{align}\label{DS+ K2}\textrm{$\M^+$ in $\K_2$\,:\;\;}
\left\{\begin{array}{cc}
\dot{X} = \;X \, [\,X-(\tilde{N}_- -2)+\frac{Z}{\lambda} \,] , & 
\dot{Y} = \;Y\, [\,Y+1-M_+ (\lambda(N-1)-W )\,] , \\ 
\dot{Z}= \;Z\, [\, \tilde{N}_- -pX - \frac{Z}{\lambda}\,],  &
\dot{W} = W\, [\,1-qX +M_+ (\lambda(N-1)-W )\,].
\end{array}\right.
\end{align}
Likewise, for the operator $\M^-$ one has:
\begin{align}\label{DS- 3Q}\textrm{$\M^-$ in $\K_0$\,:\;\;}
\left\{\begin{array}{cc}
\dot{X} = \;X \, [\,X-(\tilde{N}_+ -2)+\frac{Z}{\Lambda} \,] , \;
& \dot{Y} = \;Y \, [\,Y-(\tilde{N}_+ -2)+\frac{W}{\Lambda} \,] , \\ 
\dot{Z}= \;Z\, [\, \tilde{N}_+ -pX - \frac{Z}{\Lambda}\,],\; 
&\dot{W}= \;W\, [\, \tilde{N}_+ -qX - \frac{W}{\Lambda}\,],
\end{array}\right.
\end{align}
\vspace{-0.2cm}
\begin{align}\label{DS- K1}\textrm{$\M^-$ in $\K_1$\,:\;\;}
\left\{\begin{array}{cc}
\dot{X} =\;X \, [\,X+1-M_- (\Lambda(N-1)-Z )\,], \;& \dot{Y} = \;Y \, [\,Y-(\tilde{N}_+ -2)+\frac{W}{\Lambda} \,] , \\ 
\dot{Z} = \;Z\, [\,1-pY +M_- (\Lambda(N-1)-Z )\,], \; &\dot{W}= \;W\, [\, \tilde{N}_+ -qX - \frac{W}{\Lambda}\,],
\end{array}\right.
\end{align}
\vspace{-0.2cm}
\begin{align}\label{DS- K2}\textrm{$\M^-$ in $\K_2$\,:\;\;}
\left\{\begin{array}{cc}
\dot{X} = \;X \, [\,X-(\tilde{N}_+ -2)+\frac{Z}{\Lambda} \,] , & 
\dot{Y} = \;Y\, [\,Y+1-M_- (\Lambda(N-1)-W )\,] , \\ 
\dot{Z}= \;Z\, [\, \tilde{N}_+ -pX - \frac{Z}{\Lambda}\,],  &
\dot{W} = W\, [\,1-qX +M_- (\Lambda(N-1)-W )\,].
\end{array}\right.
\end{align}
\begin{rmk}
No stationary points exist in $\K_0\cup\K_1\cup\K_2$. Indeed, since $v^\prime >0$ and $v>0$ yields $v^{\prime\prime}>0$ then $\dot{Y}>0$ in $\K_1$. Analogously, $\dot{X}>0$ in $\K_2$, and $\dot{X}, \dot{Y}>0$ in $\K_0$. 
In other words, we do not have stationary points out of $\K$ when we are considering positive solutions $(u,v)$ of \eqref{LE}.
\end{rmk}

\subsection{Uniqueness of regular solutions}

We first consider the case of the $\M^+$ system.
Let $(u,v)$ be a regular solution of the initial value problem \eqref{shooting} and $\Gamma$ its corresponding trajectory of \eqref{DS+} in $\K$.
The relations
\begin{center}
	$XZ=r^2\frac{v^p}{u}$, \qquad $YW=r^2\frac{u^q}{v}$,
\end{center}imply that there exist the limits
\begin{align}\label{k,l}
\textstyle \kappa :=\lim_{r\to 0} \frac{X(\mathrm{ln}r)}{r^2}=\frac{\eta^p}{ \lambda N\xi}, \qquad 	\ell:=\lim_{r\to 0} \frac{Y(\mathrm{ln}r)}{r^2}=\frac{\xi^q}{ \lambda N\eta},
\end{align}
and so
\begin{align}\label{Phi}
\textstyle \Phi (-\infty):=\lim_{t\to -\infty}\frac{X}{Y}(t)=\frac{\kappa}{\ell}=\frac{\eta^{p+1}}{\xi^{q+1}}, \quad \textrm{for } \,\Phi (t)=\Phi_{\Gamma}(t):=\frac{X(t)}{Y(t)} .
\end{align}

As a consequence of \eqref{Phi}, for any fixed pair of initial values $(\xi,\eta)$ it holds
\begin{align}\label{SZ xi,eta}
\textstyle \eta=c_\Gamma\, \xi^{\frac{q+1}{p+1}}, \quad \textrm{ where } \; c^{p+1}_\Gamma =\Phi(-\infty).
\end{align}
We note that $c_\Gamma^{p+1}$ gives the slope (of $X$ with respect to $Y$) of the projection of the regular trajectory $\Gamma$ on the plane $(X,Y)$, since $X(-\infty)=Y(-\infty)=0$.

Recall that the rescaled solutions $(u_\gamma,v_\gamma)$ defined in Remark \ref{rescaling}, by \eqref{X,Y,Z,W sem a}, are associated with the functions
\begin{align}\label{rescaling X,Y,Phi}
\textstyle X_\gamma (t)=X( t+\mathrm{ln} \gamma), \;\; Y_\gamma (t)= Y(t+\mathrm{ln} \gamma), \;\; Z_\gamma (t)=Z( t+\mathrm{ln} \gamma), \;\; Z_\gamma (t)= W(t+\mathrm{ln} \gamma) ,
\end{align}
whose corresponding trajectory in $\K$ is still $\Gamma$ since the system \eqref{DS+} is autonomous.
This implies that
\begin{align}\label{Phi gamma}
\textstyle\Phi_\gamma(-\infty)=\Phi (-\infty), \quad\textrm{ for }\;\;\Phi_\gamma=\frac{X_\gamma}{Y_\gamma},
\end{align}
In the case of $\M^-$, \eqref{k,l}--\eqref{Phi gamma} hold in the same way just replacing $\lambda$ by $\Lambda$ in \eqref{k,l}.

\begin{proof}[Proof of Theorem \ref{teo uniqueness}]
Let $(u,v)$, $(\bar{u},\bar{v})$ be two radial solution pairs of \eqref{LE}, either in $\rN$, or in the ball $B_R$ when $R<\infty$; in the latter case they also satisfy \eqref{H Dirichlet}. These are solutions of \eqref{shooting}, which are positively defined in the maximal radius $R$, where $R\le +\infty$.
One may choose $\gamma>0$ such that 
$u(0)=\gamma^\alpha \bar u(0)=\bar u_{\gamma} (0)=\xi$, 
where
$\bar{u}_{\gamma}(r)=\gamma^\alpha \bar u(\gamma r)$,  $\bar{v}_{\gamma}(r)=\gamma^\beta \bar v(\gamma r)$, for $\alpha, \beta$ as in \eqref{def alpha, beta}. Then $(\bar u_{\gamma},\bar v_{\gamma})$ is a positive solution pair of \eqref{shooting} for $R/\gamma$. Set $\frak m=\min\{R,R/\gamma\}$.
 
\smallskip

	\textit{Step 1)} $u(0)=\bar u_\gamma (0)$ yields $\bar v_\gamma(0) = v(0)$.

\smallskip

We suppose by contradiction that $\bar v_\gamma(0) < v(0)$; for the other inequality it is analogous.
If either $I=(0,\frak m]$ if $R<\infty$, or $I=(0,+\infty)$ if $R=\infty$, we first claim that
\begin{align}\label{claim ordering}
\textrm{$\bar v_\gamma(r) < v(r)$ \; for all $r\in I$.}
\end{align}

Indeed, assume on the contrary that there exists $a \in I$ such that $\bar{v}_\gamma - v<0$ in $[0, a)$ and $(\bar{v}_\gamma – v)(a)= 0$.
Let us first show this implies
 $\bar{u}_\gamma-u > 0$ in $(0, a]$ 
(here we do not include $0$ since $\bar{u}_\gamma (0)=u(0)$). 
	
	If this was not true, then there would exist $b \in (0, a]$ such that $(\bar{u}_\gamma-u)(b) \le  0$. Since 
	\begin{align}\label{conta u,wgamma}
	\textrm{$\mathcal{M}^+(D^2(\bar{u}_\gamma-u))\ge \mathcal{M}^+(D^2\bar{u}_\gamma)-\mathcal{M}^+(D^2 u) =v^p-\bar{v}_\gamma^p>0 $\, in $B_b$, \;\;  $\bar{u}_\gamma-u \le 0$\, on $\partial B_b\,$,}
	\end{align} 
then the maximum principle (MP) implies $\bar{u}_\gamma - u \le  0$ in $B_b$, and the strong maximum principle (SMP) yields $\bar{u}_\gamma - u <  0$ in $B_b$. We refer for instance \cite{BardidaLio, CafCab, Quaas2004} on properties of Pucci operators, MP and SMP. But this contradicts our initial assumption $\bar{u}_\gamma(0)=u(0)$.
	
	Now, by using the other equation we derive
	\begin{center}
		$\mathcal{M}^+(D^2(v-\bar{v}_\gamma))\ge \mathcal{M}^+(D^2v)-\mathcal{M}^+(D^2 \bar{v}_\gamma) =\bar{u}_\gamma^q-u^q\ge 0 $\; in $B_a$, \quad  $v-\bar{v}_\gamma = 0$\; on $\partial B_a\,$,
	\end{center} 
	then again MP and SMP (since $\bar{u}_\gamma>u$ out of $0$) give us $v-\bar{v}_\gamma  < 0$ in $B_a$, which in turn contradicts $\bar{v}_\gamma<v$ in $B_a$, and the claim \eqref{claim ordering} is proved.
	
Now we claim that, for the interval $I$ as above, we have
\begin{align}\label{claim2 ordering}
\textrm{$\bar{u}_\gamma> u$ in $I$.}
\end{align}

Otherwise, there would exist $b\in I$ ($b\neq 0$), such that  $\bar{u}_\gamma -u\le 0$ at $r=b$.  Then it implies \eqref{conta u,wgamma} (recall that $\bar{v}_\gamma<v$ in $I$ holds due to the conclusion of Claim \ref{claim ordering}). Thus we derive $\bar{u}_\gamma\le u$ in $B_b$ by MP. Again $\bar{u}_\gamma < u$ in $B_b$ by SMP (since the strictly inequality is in force in \eqref{conta u,wgamma}); in particular it holds at $0$.  But this contradicts $\bar{u}_\gamma (0) =u(0)$, and so \eqref{claim2 ordering} is proved.

\smallskip

(i) If $R=\infty$, then for  $\varepsilon>0$ we set $U=\bar{u}_\gamma -u-\varepsilon$. Since $u,\bar{u}_\gamma\to 0$ as $r\to \infty$ (cf.\ Corollary \ref{cor decay u,v}), then in particular there exists a large $b>0$ such that $U\le 0$ for all $|x|\geq b$, and as in \eqref{conta u,wgamma},
\begin{center}
	$\mathcal{M}^+(D^2 U)>0$\; in $B_b$, \qquad $U\leq 0$\; on $\partial B_b$. 
\end{center}
Hence MP yields $U\le 0$ in $B_b$, and by letting $b\to \infty$ we get $\bar{u}_\gamma-u\le \varepsilon$ in $\rN$. Since $\varepsilon$ is arbitrary, then $\bar{u}_\gamma \le u$ in $\rN$, which contradicts \eqref{claim2 ordering}.

\smallskip

(ii) If $R<\infty$, then $\sigma=(\bar{v}_\gamma - v)(\frak m)<0$. But since
\begin{center}
	$\sigma =- v(R/\gamma)<0$ \, if $ \gamma> 1$, \qquad $\sigma =0$ \, if $\gamma=1$, \qquad $\sigma =\bar{v}_\gamma(R)>0$ \, if $\gamma<1$,
\end{center}
then $\gamma > 1$. Next, by Claim \ref{claim2 ordering} we have $\bar{u}_\gamma-u > 0$ in $(0, \frak m]$. So $\theta=(\bar{u}_\gamma-u)(\frak m)>0$, while
\begin{center}
	$\theta =- u(R/\gamma)<0$ \, if $ \gamma> 1$, \qquad $\theta =0$ \, if $\gamma=1$, \qquad $\theta =\bar{u}_\gamma (R)>0$ \, if $\gamma<1$.
\end{center}
Therefore $\gamma <1$, which is impossible. This proves that $\bar{v}_\gamma(0) = v(0)$.

\vspace{0.2cm}

\textit{Step 2)} Equal shootings in \eqref{shooting} lead to equal solutions.

\smallskip

Note that if either $R=\infty$ or $R<\infty$ with $p,q\ge 1$, then the uniqueness of the ODE problem \eqref{shooting} is a consequence of Lipschitz continuity; in particular \textit{(i)} is proved. Assume then $R<\infty$ in the general superlinear regime $pq>1$. 

Recall we have chosen $\gamma>0$ such that $\xi=\gamma^\alpha\bar{\xi}$. Next, by Step 1 we have obtained $\eta=\gamma^\beta \bar{\eta}$. Let $\Gamma=(X,Y,Z,W)$ and $\bar \Gamma=(\bar X,\bar Y,\bar Z,\bar W)$ be the trajectories associated to $(u,v)$ and $(\bar u,\bar v)$ respectively, with $c_\Gamma$ and $ c_{\bar\Gamma}$ being the corresponding slopes of their projections on the plane $(X,Y)$. 
Further, $\bar \Gamma$ is also the corresponding trajectory to $(\bar u_\gamma, \bar v_\gamma)$, by \eqref{rescaling X,Y,Phi} since the system is autonomous.

Now we infer that $\Gamma=\bar{\Gamma}$. Indeed, if $\bar \kappa$ and $\bar \ell$ are the constants in \eqref{k,l} related to $\bar X$, $\bar Y$, then
$\bar \kappa=\kappa \gamma^{\alpha-\beta p}$ and $\bar \ell=\ell \gamma^{\beta-\alpha q}$, 
since $\xi=\gamma^\alpha\bar{\xi}$ and $\eta=\gamma^\beta \bar{\eta}$.
Hence ${c}_{\bar\Gamma}=c_\Gamma$, which means that the projections of $\Gamma$ and $\bar \Gamma$ on the plane $(X,Y)$ coincide.
Since the unstable manifold at $N_0$ is a graph on the variables $(X,Y)$ (see also Remark \ref{def local manifold N0} ahead), then the trajectories $\Gamma$ and $\bar \Gamma$ are the same.

So we deduce that $\gamma=1$ and $(u,v)=(\bar u,\bar v)=(\bar u_{\gamma},\bar v_{\gamma})$ in $B_R$. 
Since any positive solution of \eqref{LE}, \eqref{H Dirichlet} is radially symmetric by \cite{EG}, item \textit{(ii)} is proved.
\end{proof}

\subsection{Blow-up analysis of regular solutions}

As far as regular trajectories are concerned, the conclusion of Proposition \ref{AP bounds} (ii) is always verified. On the other hand, if (i) does not hold for such a trajectory $\tau$, it means that at least one of $X,Y$ blows up in finite forward time. 
In the sequel we characterize the blow-up of regular trajectories.

\begin{rmk}\label{def local manifold N0}
The local unstable manifold at the point $N_0$, which we denote by $\mathcal{W}_u(N_0)$, is a two dimensional graph of smooth functions $\varphi (X,Y)$, $\psi (X,Y)$, see Proposition \ref{local study stationary points}(2). 
Namely, for some $\rho>0$, set $\mathcal{U}(N_0)=B_\rho(0,0)\backslash\left\{  (0,0)\right\}\subset\real^2$, and write 
\begin{center}
	$\mathcal{W}_u(N_0)=\{\, (X,Y,\varphi(X,Y),\psi(X,Y))\in\real^4\;: \; (X,Y)\in \mathcal{U}(N_0)\,\}$. 
\end{center}
Let $\Gamma_{x,y}=(X,Y,Z,W)$ be the unique trajectory passing through the point $(x,y,\varphi(x,y),\psi(x,y))$ in $\mathcal{W}_{u}(N_0)$ at time $t=0$, where $(x,y)\in \mathcal{U}(N_0)$. Set $T_*=T_*(x,y)$ so that $(-\infty,T^*]$ is the maximal interval of existence for $\Gamma_{x,y}$.
\end{rmk}

We split the set\, $\mathcal{U}(N_0)$ as the disjoint union
$\mathcal{U}(N_0)=\mathcal{N}_1\cup\mathcal{N}_{2}	\cup\mathcal{D}\cup \mathcal{G}_{N_0}$, where \vspace{-0.15cm}
\[
\begin{array}
[c]{c}
\mathcal{N}_{1}=\{  (x,y)\in\mathcal{U}(N_0):x,y>0,\,\textrm{ for }\, \Gamma_{x,y}, \, \lim_{t\rightarrow T_*}X(t)=+\infty, \,Y(T_*)<+\infty \}  ,\vspace{0.08cm}\\
\mathcal{N}_{2}=\{  (x,y)\in\mathcal{U}(N_0):x,y>0,\,\textrm{ for }\, \Gamma_{x,y}, \, X(T_*)<+\infty, \,\lim_{t\rightarrow T_*}Y(t)=+\infty  \},\vspace{0.08cm}\\
\mathcal{D}=\{  (x,y)\in\mathcal{U}(N_0):x,y>0,\,\textrm{ for }\, \Gamma_{x,y}, \, \lim_{t\rightarrow T_*}X(t)=\lim_{t\rightarrow T_*}Y(t)=+\infty\} ,\vspace{0.08cm}\\
\mathcal{G}_{N_0}=\{  (x,y)\in\mathcal{U}(N_0):x,y>0,\,\, \Gamma_{x,y}  \textrm{ stays in the box $\mathcal{B}_\pm$ of Proposition \ref{AP bounds} }\} .
\end{array}\vspace{-0.1cm}
\]

\begin{prop}\label{Th1.1 lemma} 
$\mathcal{N}_1$ and $\mathcal{N}_2$ are open nonempty subsets. Moreover, $\mathcal{N}_1$ contains a neighborhood of the $X$--axis, and $\mathcal{N}_2$ contains a neighborhood of the $Y$--axis. Further,  $\mathcal{D}\cup \mathcal{G}_{N_0}= \overline{\mathcal{N}}_1\cap \overline{\mathcal{N}}_2 \neq \emptyset$.
\end{prop}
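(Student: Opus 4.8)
The plan is to describe the four sets through the \emph{forward} behaviour of the regular trajectories $\Gamma_{x,y}$ issued from $N_0$, and then to extract the topological statements. First I would record the dichotomy for a regular trajectory (I treat $\mathcal{M}^+$; the case $\mathcal{M}^-$ is analogous by exchanging $\lambda\leftrightarrow\Lambda$, $\tilde N_+\leftrightarrow\tilde N_-$): by Proposition~\ref{AP bounds}(ii), equivalently Proposition~\ref{prop flow}(iii), the components $Z,W$ stay below $\lambda N$ for all time, so $\Gamma_{x,y}$ can cease to exist at a finite forward time $T_*$ only because $X$ or $Y$ escapes to $+\infty$; and by Proposition~\ref{prop flow}(ii) together with the Riccati-type estimate in the proof of Proposition~\ref{AP bounds}(i), ``$X$ blows up in finite time'' is equivalent to ``$X$ attains the value $\tilde N_\pm-2$'', with a uniform bound on the remaining time to blow up once this happens ($\dot X\ge X(X-\tilde N_\pm+2)$ in $R^-_{\lambda,Z}$ and $\dot X\ge X(X+1)$ in $R^+_{\lambda,Z}$), and symmetrically for $Y$. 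This yields the disjoint decomposition $\mathcal{U}(N_0)=\mathcal{N}_1\cup\mathcal{N}_2\cup\mathcal{D}\cup\mathcal{G}_{N_0}$. For openness of $\mathcal{N}_1$, and of $\mathcal{N}_2$ by the symmetry $X\leftrightarrow Y$, $p\leftrightarrow q$, $Z\leftrightarrow W$, I would use that $\mathcal{W}_u(N_0)$ is a smooth two--dimensional graph (Remark~\ref{def local manifold N0}) and that the fields \eqref{DS+}, \eqref{DS-} are locally Lipschitz, so $\Gamma_{x,y}$ depends continuously on $(x,y)$: if $(x_0,y_0)\in\mathcal{N}_1$ then $X$ meets the hyperplane $L^\pm_X$ transversally ($\dot X>0$ there by Proposition~\ref{prop flow}), whence the hitting time of $\tilde N_\pm-2$ by $X$ is finite and continuous nearby, while $Y$ stays below $\tilde N_\pm-2$ on the compact interval $[0,T_*(x_0,y_0)]$; combining the uniform bound on the time to blow up of $X$ with the lower semicontinuity of the hitting time of $\tilde N_\pm-2$ by $Y$ shows that nearby trajectories still have $X\to+\infty$ at a finite time at which $Y$ is still finite.

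Next I would show that $\mathcal{N}_1$ contains a neighbourhood of the positive $X$--axis, hence is nonempty, the claim for $\mathcal{N}_2$ being symmetric. Points of $\mathcal{U}(N_0)$ close to the $X$--axis correspond, through \eqref{k,l}--\eqref{Phi}, to initial data \eqref{shooting} with $\eta$ large relative to $\xi$, i.e.\ to large asymptotic slope $c_\Gamma$; after scaling \eqref{eq scaling} one may take $\xi=1$ and $\eta$ large. A comparison argument then shows $\Gamma_{x,y}\in\mathcal{N}_1$: while $u>0$ one has $\mathcal{M}^\pm(D^2 v)=-u^q\ge -\xi^q$, so integrating the radial inequality $\Lambda(v''+\tfrac{N-1}{r}v')\ge -\xi^q$ gives $v(r)\ge \eta-C\xi^q r^2$, hence $v\ge\eta/2$ on an interval of length of order $\sqrt{\eta/\xi^q}$; on that interval $\mathcal{M}^\pm(D^2 u)=-v^p\le -(\eta/2)^p$, so $\lambda(u''+\tfrac{N-1}{r}u')\le -(\eta/2)^p$ forces $u(r)\le \xi-c(\eta/2)^p r^2$ to vanish at a radius $R$ of order $\eta^{-p/2}$, which for $\eta$ large lies well inside the interval where $v\ge\eta/2>0$. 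Thus $X\to+\infty$ at $t=\ln R$ while $Y$ stays finite. So there is $c_0$ with $c_\Gamma\ge c_0\Rightarrow\Gamma_{x,y}\in\mathcal{N}_1$, and since $c_\Gamma\to+\infty$ as $(x,y)$ approaches the $X$--axis, a full neighbourhood of that axis lies in $\mathcal{N}_1$.

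It remains to prove $\mathcal{D}\cup\mathcal{G}_{N_0}=\overline{\mathcal{N}}_1\cap\overline{\mathcal{N}}_2$. One inclusion and nonemptiness are immediate: since $\mathcal{N}_1,\mathcal{N}_2$ are open and disjoint, $\overline{\mathcal{N}}_1\cap\overline{\mathcal{N}}_2=\partial\mathcal{N}_1\cap\partial\mathcal{N}_2\subseteq\mathcal{U}(N_0)\setminus(\mathcal{N}_1\cup\mathcal{N}_2)=\mathcal{D}\cup\mathcal{G}_{N_0}$, and $\mathcal{D}\cup\mathcal{G}_{N_0}\ne\emptyset$ because two disjoint nonempty open sets cannot cover the connected set $\mathcal{U}(N_0)$. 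For the reverse inclusion I would parametrise regular trajectories by the scaling--invariant slope $c_\Gamma\in(0,+\infty)$ of \eqref{SZ xi,eta}--\eqref{Phi gamma}: all points of one trajectory belong to the same of the four sets, and $c_\Gamma$ is a continuous surjection onto $(0,+\infty)$, so the decomposition descends to a disjoint $(0,+\infty)=S_1\cup S_2\cup S_E$ with $S_1,S_2$ relatively open, $S_1\supseteq(c_0,+\infty)$, $S_2\supseteq(0,c_1)$ by the previous step, and $S_E$ the slopes realised by $\mathcal{D}\cup\mathcal{G}_{N_0}$. By Theorem~\ref{teo uniqueness} there is at most one regular entire solution and at most one regular ball solution, each unique up to scaling, so $\mathcal{G}_{N_0}$ and $\mathcal{D}$ are each the arc of a single trajectory and $S_E$ has at most two points. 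The decisive step is that every slope in $S_E$ separates $S_1$ from $S_2$: perturbing a slope producing a ball solution tips the simultaneous escape of $X$ and $Y$ so that one precedes the other; perturbing a slope producing an entire solution makes the trajectory miss $\mathcal{W}_s(M_0)$ and escape past the saddle $M_0$ into $\mathcal{N}_1$ or $\mathcal{N}_2$ according to the side, using the local structure at $M_0$ from Proposition~\ref{local study stationary points}(1). Since $(0,+\infty)$ with finitely many points removed is a disjoint union of open intervals, each lying entirely in $S_1$ or $S_2$, the separation property forces $S_E$ to reduce to a single point $c^*$ with $S_1=(c^*,+\infty)$, $S_2=(0,c^*)$; equivalently $\mathcal{D}$ and $\mathcal{G}_{N_0}$ are not simultaneously nonempty and the unique trajectory making up $\mathcal{D}\cup\mathcal{G}_{N_0}$ is precisely $\overline{\mathcal{N}}_1\cap\overline{\mathcal{N}}_2$.

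The main obstacle is exactly this last point: showing that each trajectory in $\mathcal{D}\cup\mathcal{G}_{N_0}$ is a separatrix, equivalently that $\mathcal{D}$ and $\mathcal{G}_{N_0}$ cannot both be nonempty. It cannot be settled by soft topology; it rests on the fine behaviour of the flow near the saddle $M_0$ (its stable manifold being a graph, cf.\ Remark~\ref{def local manifold N0} and Proposition~\ref{local study stationary points}(1)) and on a quantitative transversal--crossing analysis at the hyperplanes $L^\pm_X$, $L^\pm_Y$. This is the technical heart, and it is consistent with (and in effect yields) the exclusion principle of Theorem~\ref{teo uniqueness trajectory}.
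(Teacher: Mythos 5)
Your handling of the first three assertions is essentially sound, though by different means than the paper. For nonemptiness the paper analyzes the boundary trajectory $Y\equiv 0$ of the reduced system \eqref{sistema Y=0} explicitly and then propagates to a neighborhood; your barrier/comparison argument for the shooting problem with $\xi=1$ and $\eta$ large is a legitimate alternative. For openness, however, the paper's mechanism is quantitative: it shows $\dot\Phi/\Phi\ge X-Y-A$ for $\Phi=X/Y$, keeps $\Phi\ge 2A/\varepsilon>2$ up to $T_*$, and then excludes $(x,y)\in\mathcal{D}$ because trajectories in $\mathcal{D}$ satisfy $\Phi\to 1$ by the Hopf lemma \eqref{eq Hopf th1.1}. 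Your softer "continuity of hitting times" needs a comparable quantitative ingredient to rule out that a nearby trajectory has $Y$ blow up at or before the blow-up time of $X$ (e.g.\ choosing $T_1<T_*$ with $X(T_1)$ large relative to $Y(T_1)$ and comparing $\dot X\gtrsim X^2$ with $\dot Y\lesssim CY^2$); and your parenthetical claim that $Y$ stays below $\tilde N_\pm-2$ on $[0,T_*(x_0,y_0)]$ for a point of $\mathcal{N}_1$ is neither needed nor true in general, since $Y$ may cross $L^\pm_Y$ and still be finite at $T_*$.

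The genuine gap is the inclusion $\mathcal{D}\cup\mathcal{G}_{N_0}\subseteq\overline{\mathcal{N}}_1\cap\overline{\mathcal{N}}_2$, on which the asserted nonemptiness of $\overline{\mathcal{N}}_1\cap\overline{\mathcal{N}}_2$ also rests (connectedness by itself only yields $\mathcal{D}\cup\mathcal{G}_{N_0}\neq\emptyset$, not that some point lies in \emph{both} closures). You correctly obtain the reverse inclusion from openness and disjointness, and correctly reduce, via Theorem \ref{teo uniqueness}, to showing that each of the at most two exceptional trajectory arcs separates $\mathcal{N}_1$ from $\mathcal{N}_2$; but you then label this "decisive step" an obstacle and leave it as an assertion. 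The sketch offered for it is also flawed: a trajectory in $\mathcal{G}_{N_0}$ need not lie on $\mathcal{W}_s(M_0)$ — by Propositions \ref{AP bounds} and \ref{local study stationary points} it could instead converge to $A_0$, $P_0$ or $Q_0$, or a priori not converge at all — so "perturbing off the stable manifold of the saddle $M_0$" does not cover all cases. Finally, your remark that the separation "cannot be settled by soft topology" is at odds with the paper's own architecture: the proposition's proof establishes the decomposition, openness, the neighborhoods of the axes and nonemptiness of $\mathcal{D}\cup\mathcal{G}_{N_0}$ (deferring the remaining details to the Laplacian case of \cite{BV}), and the exclusion of simultaneous nonemptiness of $\mathcal{D}$ and $\mathcal{G}_{N_0}$ is then obtained in Theorem \ref{teo uniqueness trajectory} by a parity count of the connected components of $\overline{\mathcal{N}}_1\cap\overline{\mathcal{N}}_2$ (odd because $\mathcal{N}_1$, $\mathcal{N}_2$ abut the two axes in prescribed order, at most two by uniqueness, hence exactly one), with no local analysis at $M_0$.
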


The proof of Proposition \ref{Th1.1 lemma} is similar to  \cite[Theorem 1.1]{BV} once the correct correspondence with the case of the Pucci-Lane-Emden system \eqref{LE} is established. Since this result plays a pivotal role in our analysis, and for reader's convenience, we provide some details in what follows.

\begin{proof}
In order to fix the ideas we consider the operator $\M^+$, since for $\M^-$ it is analogous.

We first infer that if $(x,y)\not\in \mathcal{G}_{N_0}$, then
	\begin{align}\label{X+Y to infty}
\textstyle \lim_{t\rightarrow T_*}(X(t)+Y(t))=+\infty.
	\end{align}
Indeed, Proposition \ref{AP bounds} yields $Z,W\in (0,\lambda N)$ for regular orbits. 
	Now, if both components $X,Y$ were bounded, then by the classical ODE theory we would have $\Gamma_{x,y}=(X,Y,Z,W)$ defined for every time $t$, which contradicts the hypothesis.
	Thus, at least one of $X,Y$ is unbounded.
	Hence there exists a first time $T$ so that $X(T)=\tilde{N}_+-2$ or $Y(T)=\tilde{N}_+-2$, $T<T_*$. 
Recall that if $X(T)=\tilde{N}-2$ then $\dot{X}>0$ from $T$ on by Proposition \ref{prop flow}(ii); the same for $Y$.

\vspace{0.08cm}

	In terms of solutions $(u,v)$ of \eqref{P radial m},  $\mathcal{N}_{1}$ is the set where $u$ vanishes before $v;$ similarly for $\mathcal{N}_{2}.$
	Further, from $\mathcal{D}$ we obtain the set of solutions $(u,v)$ of \eqref{P radial m} with $u$ and $v$ vanishing at the same time $R_*=e^{T_*}$ (see Remark \ref{Remark 2F}), so Hopf lemma yields
	\begin{align}\label{eq Hopf th1.1}
	\textstyle{\lim_{r\rightarrow R_*^-}\,\frac{u(r)}{(r-R_*)u^\prime (r)}=\lim_{r\rightarrow R_*^-}\,\frac{v(r)}{(r-R_*)v^\prime (r)}=1, \quad \textrm{then \;\;} \lim_{t\rightarrow		T_*^-}\,\frac{X}{Y}=1. }
	\end{align}
	
Let $\bar{x}\in\left( 0,\rho\right)$. Let us consider the borderline point $(\bar x, 0)$ of $\{(x,y)\in\mathcal{U}(N_0): x,y>0\}$, where $\Gamma_{\bar{x},0}=(\bar{X},\bar{Y}, \bar{Z}, \bar{W})$ is the trajectory with $\bar{Y}\equiv 0$ passing through the point $(\bar{x},0,\varphi(\bar{x},0),\psi(\bar{x},0))\in \mathcal{W}_u$ at time $t=0$. Let us see that $\bar{X}$ blows up in finite time and  $\Gamma_{\bar{x},0}$ does not correspond to a regular solution $u,v$ of \eqref{P radial m}.

Note that $Y\equiv 0$ means that $Y(t)=-\frac{\rmd }{\rmd  t}(\mathrm{ln}(v(e^t)))\equiv 0$, and so $v$ is a positive constant. Also, the system \eqref{DS+} for $Y\equiv 0$ becomes
\begin{align}\label{sistema Y=0}
\dot{X} = \;X \; [\,X+1-M_+ (\lambda(N-1)-Z )\,], \quad 
\dot{Z} = \;Z\; [\,1 +M_+ (\lambda(N-1)-Z )\,], \\
\;\dot{W} =\;W\; [\,1-qX +M_+ (\lambda(N-1)-W )\,].\nonumber
\end{align}

The equation for $Z$ in \eqref{sistema Y=0} is autonomous, whose RHS is continuous and positive at $\pi_{3,\lambda}$. Thus a qualitative ODE analysis unveils that either: 

(a) $Z\in (0,\lambda N)$ is defined for all time, and it is forward increasing;

(b) $Z\equiv \lambda N$ is a stationary orbit for all time;

(c) or $Z>\lambda N$ blows up in finite backward time, and it is forward decreasing for all time. 

We infer that situation (c) is not admissible. In fact, a point of blow-up for $Z$ would produce a positive radius $r_0>0$ at which $u^\prime (r_0)=0$, and so $X(\mathrm{ln} r_0)=0$. But this is impossible since the projection of the trajectory $\Gamma_{\bar{x},0}$ on the plane $(X,Y)$ lies on the $X$ axis; i.e.\ it starts at $(0,0)$ and is increasing in the $X$ direction.

If (b) is true, then $\dot X=X(X+2)>0$, it is easy to see that $X$ blows up in finite forward time, and we are done.
Assume then (a) occurs, then the explicit expression for $Z$ is given by 
\begin{center}
	$Z(t)=\frac{\lambda N c_1 e^{N(t-t_1)} }{1+ c_1 e^{N(t-t_1)} }$,  \quad where $c_1=\frac{Z_1}{ \lambda N- Z_1 }>0$, \quad for $Z_1=Z(t_1)$.
\end{center}
By replacing this into the definition of the variable $Z$ (recall that $v\equiv c$) yields an explicit expression for $u$ in which $u(0)=+\infty$ and $u(+\infty)=-\infty$. Hence by the Mean Value theorem there exists a radius $R>0$ at which $u(R)=0$, and so $X$ blows up in finite time, thus $(\bar{x},0)\in \mathcal{N}_1$.
A simple computation shows that such a pair $(u,c)$ does not satisfy our PDE system \eqref{LE}.

The next step is to show that $\mathcal{N}_1$ contains a neighborhood of $(\bar x, 0)$. Let us prove that a neighborhood of regular trajectories $\Gamma_{x,y}$ with $x,y>0$ in $\mathcal{W}_u(N_0)$ blows up only in $X$ at $T_*(x,y)$ whenever $Y$ is taken sufficiently small.

Set $A:= 2N-\tilde{N}>0$, pick up $\varepsilon\in (0,A)$ and $\eta>0$ such that $B_\eta(\bar{x},0)\subset B_\rho(0,0)$. Since $\lim_{t\to T_*}\bar{X}(t)=+\infty$, by continuity of the ODE system with respect to the initial conditions, for any $(x,y)\in B_\eta(\bar{x},0)$ with $y>0$ and $\Gamma_{x,y}=(X,Y,Z,W)$ as above, there exists 
$T_{\varepsilon}<T_*$
such that $X(T_{\varepsilon})=2A$, and $Y(t)\in (0,\varepsilon)$ for all $t\leq	T_{\varepsilon}$. Note that $X$ is increasing from $T_\varepsilon$ on. 
Now, define $\Phi=X/Y$ in $(-\infty,T_*)$. By using 
\begin{align}\label{cota M Z,W}
\textstyle M_+(\lambda(N-1)-Z)\le N-1-\frac{Z}{\Lambda},  \quad
M_+(\lambda (N-1)-W)\ge \tilde{N}-1-\frac{W}{\lambda}, 
\end{align}
together with $Z\geq 0$ and $W\leq \lambda N$, we obtain
	\begin{align}\label{eq1 th1.1}
	\textstyle \frac{\dot{\Phi}}{\Phi}=X-Y-M_+(\lambda(N-1)-Z)+M_+(\lambda (N-1)-W)\geq X-Y-A,
	\end{align}
	so $\dot{\Phi}(T_{\varepsilon})>0$. 
Set $\theta=\sup\{\, 
 t\in (T_{\varepsilon},T_*]
:\,\dot{\Phi}>0\, \textrm{ in }(T_\varepsilon,t)\, \}$, then 
	\begin{align}\label{eq2 th 1.1}
	\textrm{$\Phi(t)\ge \Phi\left(  T_{\varepsilon}\right)  \ge 2A/\varepsilon>2$\quad for all $t\in (T_\varepsilon,\theta]$. }
	\end{align}
Here $X,Y$ do not blow up in $(T_\varepsilon,T_*)$ by definition of $T_*=T_*(x,y)$ and construction of $T_\varepsilon$.

If we had $\theta<T^*$ then $\dot{\Phi}(\theta)=0$, so \eqref{eq1 th1.1} would imply $X(\theta)  \leq Y(  \theta)  +A$. Also, $Y( \theta)  <X\left(	\theta\right)  /2$ by \eqref{eq2 th 1.1}. Thus $X(\theta)<X(\theta)  /2+A$, which contradicts the fact that $X(\theta)\geq X(T_\varepsilon)=2A$. 
Hence $\theta=T^*$. If $\lim_{t\rightarrow T^{\ast}}Y(t)=\infty$ then $(x,y)\in \mathcal{D}$ and so $\lim_{t\rightarrow T_*}\Phi(t)=1$ by \eqref{eq Hopf th1.1}; but this is impossible by \eqref{eq2 th 1.1}. 
Therefore $(x,y)\in\mathcal{N}_{1}$, for any $(x,y)\in B_\eta (\bar x, 0)$.
This shows that a neighborhood of $\Gamma_{\bar{x},0}$ is contained in $\mathcal{N}_{1}$. Reasoning similarly, one proves that $\mathcal{N}_1$ is open.
By exchanging the roles of $X,Y$ one sees that $\mathcal{N}_2$ is nonnempty and open. 

Now, since $\mathcal{U}(N_0)$ is connected, then either $\mathcal{D}$ or $\mathcal{G}_{N_0}$ is nonempty.
\end{proof}

\begin{rmk}
The solutions associated with trajectories on the axes $X\equiv 0$ and $Y\equiv 0$ in the plane $(X,Y)$ can be seen as ``infinite shootings", namely  $X\equiv 0 \leftrightarrow ``\eta=+\infty"$ and $Y\equiv 0 \leftrightarrow ``\xi=+\infty"$.
\end{rmk}

\begin{proof}[Proof of Theorem \ref{teo uniqueness trajectory}]
Let us show that $\overline{\mathcal{N}}_1\cap \overline{\mathcal{N}}_2$ contains a unique trajectory. In terms of the second order PDE problem, this means that $\mathcal{G}$ is empty if and only if $\C$ is nonempty, and viceversa.	
	
We fix a pair $(p,q)$ with $p,q>0$ and $pq>1$.
By Proposition \ref{Th1.1 lemma}, any point of $\mathcal{D}$ or $\mathcal{G}_{N_0}$ must lie in the intersection $\overline{\mathcal{N}}_1\cap \overline{\mathcal{N}}_2$.
Next we infer that $\mathcal{D}$ consists of a unique curve given by the projection of a unique trajectory, and the same holds for $\mathcal{G}_{N_0}$. 

Indeed, recall that there is a biunivocal correspondence between $\mathcal{U}(N_0)$ and the set of orbits in $\real^4$ around $N_0$ by Remark \ref{def local manifold N0}; in particular in $\mathcal{U}(N_0)$ there cannot be orbits intersection. Formally, the uniqueness of projections is a consequence of uniqueness of trajectories. We infer that the latter for $\mathcal{D}$ and $\mathcal{G}_{N_0}$, in turn, comes from uniqueness of solutions given in Theorem \ref{teo uniqueness}.

Indeed, if we had two trajectories $\Gamma$ and $\tilde{\Gamma}$ defined in $\rN$, by the one to one correspondence between orbits of \eqref{DS+} and solutions of \eqref{LE} given by \eqref{X,Y,Z,W sem a} and \eqref{def u via X,Z}, these would be associated with two solutions $(u,v)$ and $(\tilde{u},\tilde{v})$ in $\rN$. Then Theorem \ref{teo uniqueness}(i) yields $(\tilde{u},\tilde{v})=(u_\gamma,v_\gamma)$ in $\rN$. Since the system is autonomous, $\Gamma=\Gamma_\gamma=\tilde{\Gamma}$, see \eqref{rescaling X,Y,Phi}. 
 By a similar reasoning, any two trajectories corresponding to two regular solutions in the ball, even for different radii, represent the same trajectory by Theorem \ref{teo uniqueness}(ii).
 
 Therefore, $\overline{\mathcal{N}}_1\cap \overline{\mathcal{N}}_2\cap \mathcal{G}_{N_0}$ consists of at most one trajectory in $\mathcal{U}(N_0)$, and the same is true for $\overline{\mathcal{N}}_1\cap \overline{\mathcal{N}}_2\cap \mathcal{D}$.  This gives us at most two connected components for $\overline{\mathcal{N}}_1\cap \overline{\mathcal{N}}_2$.
Hence, we only need to prove that $\mathcal{N}_1$ and $\mathcal{N}_2$ are connected sets, which translates into saying that we do not have corresponding solutions both in a ball and in $\rN$ simultaneously.

To see this, we use the fact that the ordering of the connected components around the axes $X,Y$ is already prescribed, in the sense that one displays $\mathcal{N}_1$ near $Y\equiv 0$, and $\mathcal{N}_2$ near $X\equiv 0$. Thus $\overline{\mathcal{N}}_1\cap \overline{\mathcal{N}}_2$ must have an odd number of connected components.
This means that if $\overline{\mathcal{N}}_1\cap \overline{\mathcal{N}}_2$ had more than one connected component, then it would need to have at least three (see Figure \ref{Fig manifoldN0}); but this is impossible since we have at most two of them.
	\begin{figure}[!htb]\centering	\includegraphics[scale=0.45]{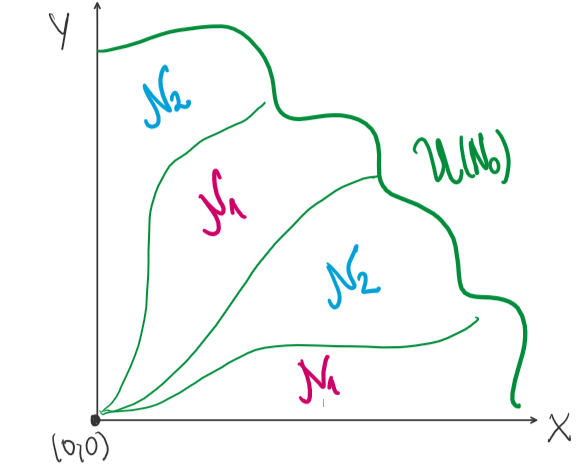}	
		\caption{The projection of the stable manifold on the plan $X,Y$ in the case that $\overline{\mathcal{N}}_1\cap\overline{\mathcal{N}_2}$ has three connected components.}\label{Fig manifoldN0}
	\end{figure}
\end{proof}

\begin{defin}\label{def Gammap,q}
For each $(p,q)$ with $p,q>0$, $pq>1$, by Theorem \ref{teo uniqueness trajectory}, we define $\Gamma_{p,q}\in \overline{\mathcal{N}_1}^{p,q}\cap \overline{\mathcal{N}_2}^{p,q}$ as the unique regular trajectory exiting $N_0$ such that either $(p,q)\in \C$ or $(p,q)\in \mathcal{G}$.
\end{defin}

\begin{rmk}\label{rmk Gamma p,q cont}
The mapping $(p,q)\mapsto \Gamma_{p,q}$ is continuous. Indeed, this is a consequence of \eqref{eigenvector N0} (see the proof of Proposition \ref{local study stationary points}(2)), since in a neighborhood of $N_0$, $Z$ and $W$ are continuous functions of $X,Y,p,q$.
\end{rmk}

\begin{prop}\label{C is open}
$\C$ is open.
\end{prop}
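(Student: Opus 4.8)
The plan is to show that every $(p_0,q_0)\in\C$ has a whole neighborhood contained in $\C$. I will work with the unique regular trajectory $\Gamma_{p,q}$ of Definition \ref{def Gammap,q}: by Theorem \ref{teo uniqueness trajectory} together with Proposition \ref{Th1.1 lemma}, the base point of $\Gamma_{p,q}$ on $\mathcal{U}(N_0)$ lies in exactly one of the disjoint pieces $\mathcal{D}$ or $\mathcal{G}_{N_0}$, and moreover $(p,q)\in\C$ if and only if this point is in $\mathcal{D}$ --- equivalently, by Remark \ref{Remark 2F}, if and only if the $X$ and $Y$ components of $\Gamma_{p,q}$ blow up at a common finite time --- while $(p,q)\in\mathcal{G}$ if and only if the point is in $\mathcal{G}_{N_0}$, i.e.\ $\Gamma_{p,q}$ remains forever inside the box $\mathcal{B}_\pm$ of Proposition \ref{AP bounds}.

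The key elementary ingredient is a \emph{uniform blow-up criterion}: there is a constant $M=M(N,\lambda,\Lambda)>0$, independent of $(p,q)$, such that if a regular trajectory of \eqref{DS+} or \eqref{DS-} satisfies $X(t_1)\ge M$ at some time $t_1$, then $X$ becomes infinite in finite forward time. Indeed, by Proposition \ref{prop flow}(iii) a regular trajectory satisfies $Z<\lambda N$ in the case of $\M^+$ (and $Z<\Lambda N$ for $\M^-$), whence $M_\pm(\cdot)$ in the first equation of \eqref{DS+}--\eqref{DS-} is bounded above by $\tilde N_\pm-1$; therefore $\dot X\ge X\,[\,X+2-\tilde N_\pm\,]\ge \tfrac12 X^2$ once $X$ is large, and an elementary ODE comparison forces finite-time blow-up of $X$ (note $\dot X>0$ keeps $X$ above $M$ thereafter). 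Consequently, if the $X$-component of $\Gamma_{p,q}$ ever reaches the value $M$, then $\Gamma_{p,q}$ cannot stay in $\mathcal{B}_\pm$, so its base point is not in $\mathcal{G}_{N_0}$; hence it lies in $\mathcal{D}$, and $(p,q)\in\C$.

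Now fix $(p_0,q_0)\in\C$. Then $\Gamma_{p_0,q_0}=(X_0,Y_0,Z_0,W_0)$ has $X_0(t)\to+\infty$ at a finite time, so we may choose $t_1$ with $X_0(t_1)>M$, and also choose a much earlier finite time $t_0<t_1$ at which $\Gamma_{p_0,q_0}(t_0)$ sits on the local unstable manifold $\mathcal{W}_u(N_0)$ near $N_0$. Since this manifold and the base point of $\Gamma_{p,q}$ on it depend continuously on $(p,q)$ (Remark \ref{rmk Gamma p,q cont} and the eigenvector computation \eqref{eigenvector N0}), and since the right-hand sides of \eqref{DS+}--\eqref{DS-} are locally Lipschitz in $\K$, continuous dependence of solutions on parameters over the compact interval $[t_0,t_1]$ --- on which $\Gamma_{p_0,q_0}$ stays in $\K$ --- yields a neighborhood $\mathcal{V}$ of $(p_0,q_0)$ such that $\Gamma_{p,q}$ is defined up to $t_1$ with $X(t_1)>M$ for every $(p,q)\in\mathcal{V}$. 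By the blow-up criterion above, $\mathcal{V}\subset\C$, so $\C$ is open.

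The step needing most care is the transfer of the information ``$X$ is already large at time $t_1$'' from $\Gamma_{p_0,q_0}$ to the nearby trajectories $\Gamma_{p,q}$: these emanate from the stationary point $N_0$ rather than from a prescribed finite-time datum, so one cannot invoke continuous dependence on parameters directly. This is circumvented by anchoring $\Gamma_{p,q}$ at a finite, very negative time $t_0$ on the smoothly varying unstable manifold $\mathcal{W}_u(N_0)$ (Remark \ref{rmk Gamma p,q cont}), after which the usual continuous-dependence theorem applies on $[t_0,t_1]$ since the orbits remain in the open region $\K$ where the vector field is Lipschitz; everything else is routine once the characterization of $\C$ in terms of $\mathcal{D}$ and the uniform blow-up criterion are in place.
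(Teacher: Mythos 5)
Your proof is correct and follows essentially the same route as the paper's: both arguments reduce openness to showing, via continuous dependence on $(p,q)$ and on the base point in $\mathcal{W}_u(N_0)$ (Remark \ref{rmk Gamma p,q cont}), that the distinguished trajectory $\Gamma_{p,q}$ for nearby $(p,q)$ still escapes the box $\mathcal{B}_\pm$ in finite time, and then invoke the dichotomy $\mathcal{D}\sqcup\mathcal{G}_{N_0}$ coming from Theorem \ref{teo uniqueness trajectory} to place its base point in $\mathcal{D}$, hence $(p,q)\in\C$. The only cosmetic difference is the escape criterion: you derive the quantitative inequality $\dot{X}\ge X\,(X+2-\tilde{N}_\pm)$ and conclude blow-up directly, whereas the paper tracks the crossings of $L_X^\pm$ and $L_Y^\pm$ and cites Proposition \ref{AP bounds}(i); both yield the same conclusion.
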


\begin{proof}
For simplicity, let us consider the operator $\M^+$; for $\M^-$ it is similar.

Let $(p_0,q_0)\in \C$. 
Then there exists a regular trajectory $\Gamma_0$ of \eqref{DS+}$_{p_0,q_0}$ that crosses both $L_X^\pm$ and $L_Y^\pm$ before $X,Y$ blowing up together at the finite time $T_0^*$. Say $\Gamma_0$ passes through the point $(x_0,y_0,\varphi (x_0,y_0),\psi (x_0,y_0))\in \mathcal{W}_u(N_0)$  at time $t=0$, for some $(x_0,y_0)\in \mathcal{U}(N_0)$, see Remark \ref{def local manifold N0}. 
By Theorem \ref{teo uniqueness trajectory} one knows that positive radial solutions of \eqref{LE}$_{(p_0,q_0)}$ in $\rN$ do not exist.

Fix $(p,q)$ near $(p_0,q_0)$. 
By ODE properties of continuity with respect to data and parameters, any regular trajectory $\Gamma_{x,y}=(X,Y,Z,W)$ of \eqref{DS+}$_{p,q}$, with $(x,y)$ close to $(x_0,y_0)$, namely $(x,y)\in B_{\delta}(x_0,y_0)$, also has to cross both $L_X^\pm$ and $L_Y^\pm$, in such a way that both $X$ or $Y$ blow up in finite time by Proposition \ref{AP bounds}, namely at $T^*_X$ and $T^*_Y$. 

Moreover, $\Gamma_{p,q}$ (see Definition \ref{def Gammap,q}) passing through the point $(x,y)$ at $t=0$ is so that $(x,y)\in B_{\delta}(x_0,y_0)$ for $(p,q)$ sufficiently close to $(p_0,q_0)$. 
So $(p,q)\not\in \mathcal{G}$, and $(p,q)\in \C$ by Theorem \ref{teo uniqueness trajectory}. 
\end{proof}

\subsection{Blow-up analysis of fast decaying solutions}

In this section we pay a special attention to fast decaying trajectories.

\begin{defin}\label{def fast decay solu}
	The set of fast decaying solutions, which we denote by $\F$, is the set of $(p,q)$ with $p,q>0$ and $pq>1$ such that solutions of \eqref{P radial m} or \eqref{P radial m-} defined in $\rN$ have their respective trajectories of \eqref{DS+} or \eqref{DS-} converging to either $A_0$, $P_0$ or $Q_0$.
\end{defin}

We stress that fast decaying trajectories are in one to one correspondence (up to scaling) with fast decaying solutions settled in Definition \ref{def fast}, accordingly to Proposition \ref{local study stationary points}(3)--(5); see also \cite[proof of Proposition 3.9]{MNPscalar}.

Set $\frak P=(x_0,y_0,z_0,w_0)$ as either $A_0$, $P_0$ or $Q_0$. Here,
\begin{center}
$(z_0,w_0)=(0,0)$ \; if $\frak P=A_0$ for $p,q>\frac{\tilde N_\pm}{\tilde N_\pm -2}$;
	
	$(z_0,w_0)=(0,\Lambda (\tilde{N}_\pm -q(\tilde{N}_\pm-2) ) )$ \; if $\frak P=P_0$ and $p>\frac{\tilde N_\pm}{\tilde N_\pm -2}$;
	
	$(z_0,w_0)=(\Lambda (\tilde{N}_\pm -p(\tilde{N}_\pm-2) ), 0 )$ \; if $\frak P=Q_0$ and $q>\frac{\tilde N_\pm}{\tilde N_\pm -2}$.
\end{center}

\smallskip

When $A_0\neq P_0$ and $A_0\neq Q_0$, we recall that $\mathcal{W}_s(\frak P)$ is locally a two dimensional graph of smooth functions $\varphi (Z,W)$, $\psi (Z,W)$; see Proposition \ref{local study stationary points} (3)--(5), \eqref{eigenvector A0}, and the proof item (4) there. 
For some $\rho>0$, set $\mathcal{U}(\frak P)=B_\rho(z_0,w_0)\backslash\left\{  (z_0,w_0)\right\}\subset\real^2$, and write \begin{center}
	$\mathcal{W}_s(\frak P)=\{\, (\varphi(Z,W),\psi(Z,W),Z,W)\in\real^4\;: \; (Z,W)\in \mathcal{U}(\frak P)\,\}$.
\end{center}
 
Given $(z,w)\in \mathcal{U}(\frak P)$, let $\Upsilon_{z,w}=(X,Y,Z,W)$ be the unique trajectory passing through the point $(\varphi(z,w),\psi(z,w),z,w)$ in $\mathcal{W}_{s}(\frak P)$ at time $t=0$. Then set $T_*=T_*(z,w)$ so that $[T^*,+\infty)$ is maximal interval of existence for $\Upsilon_{z,w}$.

At the collision points $A_0=P_0$ and $A_0=Q_0$, a trajectory in $\K$ enters by a center manifold of dimension one, while its stable manifold is also one dimensional, i.e.\
\begin{center}
$\mathcal{W}_c(\frak P)=\{\, (X(W),Y(W),Z(W),W)\in\real^4\,\}$, \quad $\mathcal{W}_s(\frak P)=\{\, (X(Z),0,Z,0)\in\real^4\,\}$.
\end{center}
Thus, any trajectory converging either to $A_0= P_0$ or $A_0= Q_0$ is a local graph on the variables $(Z,W)$, which again can be represented by 
\begin{align}\label{Ws,Wc}
\textrm{$\{\, \Upsilon_{z,w}= (\varphi(Z,W),\psi(Z,W),Z,W)\in\real^4\,\}$ \; near either $A_0= P_0$ or $A_0= Q_0$,}
\end{align}
whose projection we still denote by $(z,w)\in \mathcal{U}(\frak P)$ w.l.g.

\smallskip

Now, we split the set\, $\mathcal{U}(\frak P)$ as the disjoint union
$\mathcal{U}(\frak P)=\mathcal{A}_1\cup\mathcal{A}_{2}	\cup {\varSigma}\cup \mathcal{G}_{\frak P}$, where \vspace{-0.15cm}
\[
\begin{array}
[c]{c}
\mathcal{A}_{1}=\{  (z,w)\in\mathcal{U}(\frak P): z,w>0,\,\textrm{ for }\, \Upsilon_{z,w}, \, \lim_{t\rightarrow T_*}Z(t)=+\infty, \,W(T_*)<+\infty \}  ,\vspace{0.08cm}\\
\mathcal{A}_{2}=\{  (z,w)\in\mathcal{U}(\frak P):z,w>0,\,\textrm{ for }\, \Upsilon_{z,w}, \, Z(T_*)<+\infty, \,\lim_{t\rightarrow T_*}W(t)=+\infty  \},\vspace{0.08cm}\\
\varSigma=\{  (z,w)\in\mathcal{U}(\frak P):z,w>0,\,\textrm{ for }\, \Upsilon_{z,w}, \, \lim_{t\rightarrow T_*}Z(t)=\lim_{t\rightarrow T_*}W(t)=+\infty\} ,\vspace{0.08cm}\\
\mathcal{G}_{\frak P}=\{  (z,w)\in\mathcal{U}(\frak P):z,w>0,\,\, \Upsilon_{z,w}  \textrm{ stays in the box $\mathcal{B}_\pm$ of Proposition \ref{AP bounds} }\} .
\end{array}\vspace{-0.1cm}
\]

\smallskip

\begin{prop}\label{Th1.1 lemma A0} The sets $\mathcal{A}_1$ and $\mathcal{A}_2$ are nonempty and open. Also, $\mathcal{A}_1$ contains a neighborhood of the $Z$-axis, and $\mathcal{A}_2$ contains a neighborhood of the $W$-axis.
Further, 
$\mathcal{\varSigma}\cup \mathcal{G}_{\frak P}= \overline{\mathcal{A}}_1\cap \overline{\mathcal{A}}_2 \neq \emptyset$.
\end{prop}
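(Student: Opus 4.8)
The plan is to follow the proof of Proposition~\ref{Th1.1 lemma} essentially verbatim, with the source point $N_0$ replaced by the fast decay point $\frak P\in\{A_0,P_0,Q_0\}$, the forward evolution replaced by the backward one, and the pair $(X,Y)$ replaced by $(Z,W)$. As there, I treat $\M^+$; the operator $\M^-$ is handled analogously after the usual exchange of roles.

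First I would prove the dichotomy: if $(z,w)\notin\mathcal{G}_{\frak P}$ then $T_*=T_*(z,w)$ is finite and $Z(t)+W(t)\to+\infty$ as $t\to T_*^+$. Indeed $\Upsilon_{z,w}$ is defined on $[T_*,+\infty)$ and converges to $\frak P$, so Proposition~\ref{AP bounds}(i) gives $X,Y<\tilde{N}_+-2$ on all of $[T_*,+\infty)$; if the orbit leaves the box $\mathcal{B}_+$ it is not global (Proposition~\ref{AP bounds}), hence $T_*>-\infty$ and the orbit must exit every compact subset of $\K$ as $t\to T_*^+$. Since $X,Y$ remain bounded and the coordinate hyperplanes cannot be reached from $\K$ in finite time, the only possibility is $Z(t)+W(t)\to+\infty$. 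When both $Z$ and $W$ blow up at $T_*$, the corresponding solution pair satisfies $u'=v'=0$ at $R_*=e^{T_*}$, whence a direct computation from the equations yields $Z/W\to1$ as $t\to T_*^+$ (compare \eqref{eq Hopf th1.1}).

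Next I would study the borderline orbit $\Upsilon_{\bar z,0}$ lying on the invariant hyperplane $\{W=0\}$ and passing through $(\varphi(\bar z,0),\psi(\bar z,0),\bar z,0)$. On $\{W=0\}$ the system \eqref{DS+} decouples, $Y$ being slaved while $(X,Z)$ solves a planar system whose $Z$-equation has a continuous, sign-definite right-hand side across $\pi_{3,\lambda}$; the qualitative trichotomy (a)--(c) of the proof of Proposition~\ref{Th1.1 lemma} then shows that $Z$ blows up in finite backward time while $W\equiv0$ stays finite, the case of a finite limit for $Z$ being excluded since it would force $u'$ to vanish at an interior positivity point, contradicting $X>0$ along the orbit; moreover the resulting degenerate profile does not solve \eqref{LE}. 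Hence $(\bar z,0)\in\mathcal{A}_1$. Continuity of the flow with respect to initial data, together with a ratio estimate for $Z/W$ controlled via \eqref{cota M Z,W} exactly as the ratio $X/Y$ was in \eqref{eq1 th1.1}--\eqref{eq2 th 1.1}, then shows that every nearby orbit $\Upsilon_{z,w}$ with $w>0$ small blows up only in $Z$. Thus $\mathcal{A}_1$ is nonempty and contains a neighborhood of the $Z$-axis; exchanging the roles of $(X,Z,p)$ and $(Y,W,q)$ does the same for $\mathcal{A}_2$ and the $W$-axis. Running the same continuity-plus-ratio argument around an arbitrary point of $\mathcal{A}_1$ (resp.\ $\mathcal{A}_2$) gives openness. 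Finally, since $\mathcal{U}(\frak P)=B_\rho(z_0,w_0)\setminus\{(z_0,w_0)\}$ is connected, it cannot equal the disjoint union of the two nonempty open sets $\mathcal{A}_1,\mathcal{A}_2$, so $\varSigma\cup\mathcal{G}_{\frak P}\neq\emptyset$; and since $\mathcal{A}_1,\mathcal{A}_2$ are disjoint and open one has $\overline{\mathcal{A}}_1\cap\overline{\mathcal{A}}_2\subseteq\mathcal{U}(\frak P)\setminus(\mathcal{A}_1\cup\mathcal{A}_2)=\varSigma\cup\mathcal{G}_{\frak P}$, while the prescribed ordering of the components around the two axes forces the reverse inclusion, yielding $\varSigma\cup\mathcal{G}_{\frak P}=\overline{\mathcal{A}}_1\cap\overline{\mathcal{A}}_2$.

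The step I expect to be the main obstacle is the collision regime $A_0=P_0$ (and, symmetrically, $A_0=Q_0$), where $\mathcal{W}_s(\frak P)$ is only one-dimensional and an orbit reaches $\frak P$ through a one-dimensional center manifold. There one must first justify the graph representation \eqref{Ws,Wc} of the local invariant set over the variables $(Z,W)$, starting from the center-manifold computation of Proposition~\ref{local study stationary points}(4)(ii)--(5)(ii), and then re-derive the ratio estimate with $X,Y$ frozen near their fast-decay values, checking that the sign of $M_+(\lambda(N-1)-\cdot)$ still produces the bounds \eqref{cota M Z,W} needed for the monotonicity of $Z/W$. This sign bookkeeping near the degenerate stationary point, rather than any genuinely new idea, is the technical heart of the argument.
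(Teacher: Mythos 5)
Your plan is essentially the paper's own proof: the backward-in-time mirror of the blow-up analysis at $N_0$, with the same three ingredients — a borderline orbit in the invariant set $\{W\equiv 0\}$ (which, as you can check from $\dot Y=Y[\,Y+2-\tilde N_+ +W/\Lambda\,]$ in $R^-_{\lambda,W}$, is exactly the paper's orbit with $Y\equiv\tilde N_+-2$), a monotonicity estimate for the ratio $\Psi=Z/W$ built from \eqref{cota M Z,W} together with the a priori bounds $X\le\tilde N_+-2$, $Y\ge 0$, and the connectedness of $\mathcal{U}(\frak P)$. You also correctly single out the collision cases $A_0=P_0$, $A_0=Q_0$ as the place where the graph representation \eqref{Ws,Wc} must be justified through the center manifold of Proposition \ref{local study stationary points} 4(ii)--5(ii); the paper does exactly this, and additionally treats $\frak P=P_0\neq A_0$, $Q_0\neq A_0$ by the same $Y\equiv\tilde N_+-2$ reduction.

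One step of your sketch would not survive as written: the exclusion of the alternative for the borderline orbit. You claim that a finite backward limit of $Z$ ``would force $u'$ to vanish at an interior positivity point, contradicting $X>0$.'' That implication is the $N_0$ argument transplanted in the wrong direction: since $Z=-rv^p/u'$, it is the blow-up $Z\to+\infty$ that corresponds to $u'$ vanishing (this is precisely what defines $\mathcal{A}_1$ and is the conclusion you are trying to reach), whereas a finite limit of $Z$ forces nothing about $u'$. The correct exclusion, as in the paper, is purely at the level of the autonomous piecewise scalar ODE $\dot Z = Z[\,1-p(\tilde N_+-2)+M_+(\lambda(N-1)-Z)\,]$: its only backward-global positive branch is the increasing orbit attached to the stationary value $N-p(\tilde N_+-2)$ inside $R^+_{\lambda,Z}$, and that branch is inadmissible because $\dot Z=Z[\,1-p(\tilde N_+-2)\,]<0$ on the concavity interface $\pi_{\lambda,Z}$ (not $\pi_{3,\lambda}$, which is the nullcline $\dot Z=0$). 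With that substitution the borderline orbit does blow up in $Z$ in finite backward time and the rest of your argument goes through unchanged.
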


\begin{proof} We fix the operator $\M^+$, since for $\M^-$ will be analogous. Our strategy is to show that $\mathcal{A}_1$ is nonempty and open. By symmetry it will follow that $\mathcal{A}_2$ is also nonnempty and open. Thus, since $\mathcal{U}(A_0)$ is connected, either $\varSigma$ or $\mathcal{G}_{A_0}$ will be nonempty.
	
	We first infer that if $(z,w)\not\in \mathcal{G}_{A_0}$, then
	\begin{align}\label{Z+W to infty}
	\textstyle \lim_{t\rightarrow T_*}(Z(t)+W(t))=+\infty.
	\end{align}
	Indeed, Proposition \ref{AP bounds} yields $X,Y\in (0,\tilde{N}_+-2)$ for fast decaying orbits. 
	Now, if both components $Z,W$ were bounded, then by classical ODE theory we would have $\Upsilon_{z,w}=(X,Y,Z,W)$ defined for every time $t$, which contradicts the hypothesis.
	Thus, at least one between $Z,W$ is unbounded.
	Hence there exists a first time $T$ so that $Z(T)=\lambda N$ or $W(T)=\lambda N$, $T>T_*$. 
	Recall that if $Z(T)=\lambda N$ then $\dot{Z}<0$ up to time $T$, by Proposition \ref{prop flow}(iii); the same for $Z$.
	
	\vspace{0.08cm}

	In terms of solutions $(u,v)$ of \eqref{P radial m},  $\mathcal{A}_{1}$ is the set where $u^\prime$ vanishes before $v^\prime;$ similarly for $\mathcal{A}_{2}.$
	Further, from $\varSigma$ we obtain the set of solutions $(u,v)$ of \eqref{P radial m} with $u^\prime$ and $v^\prime$ vanishing at the same time $R_*=e^{T_*}$, which gives us a solution in the exterior of a ball. 

\smallskip
	
We first consider $\frak P =A_0$, with $A_0\neq P_0$ and $A_0\neq Q_0$, and the operator $\M^+$. 

Step 1) $\mathcal{A}_1$ is nonempty.

We recall that the stable manifold at the point $A_0$ is also a graph of the variables $X,Y$, due to the tangent direction expressions in \eqref{eigenvector A0}. 
So we may consider the trajectory $\Upsilon_{\bar{z},\bar{w}}=\Upsilon_{\bar{x},\tilde{N}_+-2}=(\bar{X},\bar{Y}, \bar{Z}, \bar{W})$  with $\bar{Y}\equiv \tilde{N}_+-2$ passing through the point $(\bar x,\tilde{N}_+-2,\bar{z},\bar w)\in \mathcal{W}_s(A_0)$ at time $t=0$.
Let us see that $\bar{Z}$ blows up in finite time.
	
When $Y\equiv \tilde{N}_+-2$, the equation for $Z$ in the system \eqref{DS+} becomes independently expressed by
	\begin{align}\label{sistema X=N-2}
	\dot{Z} = Z\,  [\,1-p(\tilde{N}_+-2)+M_+ (\lambda(N-1)-Z )\,]=
 \begin{cases}
\,Z\,  [\,\tilde{N}_+ -p(\tilde{N}_+-2)-\frac{Z}{\Lambda} \,] \textrm{ \, in } R^+_{\lambda,Z}\\
\,Z\,  [\,N -p(\tilde{N}_+-2)-\frac{Z}{\lambda} \,] \textrm{ \, in } R^-_{\lambda,Z}.\\
\end{cases}
	\end{align}
Recall that $p>\frac{\tilde{N}_+}{\tilde{N}_+-2}$ since we are in the range of $(p,q)$ corresponding to the point $A_0$. Thus, in the region $R^-_{\lambda,Z}$ the stationary points $Z$ for the equation \eqref{sistema X=N-2} are $Z=\tilde{N}_+ -p(\tilde{N}_+-2)<0$ and $Z=0$. A qualitative analysis then unveils that solutions  $Z(t)>0$ are decreasing for all time, by converging to zero when $t\to +\infty $ and blowing up in finite backward time. 

On the other hand, in the region $R^+_{\lambda,Z}$ one finds a possible positive stationary point for \eqref{sistema X=N-2}, namely $Z=N-p(\tilde{N}_+-2)$.
In this case, an increasing orbit $Z(t)$ defined for $t\to -\infty$ also exists, but it is not admissible for our problem since at the concavity plane $\pi_{\lambda,Z}$ the equation \eqref{sistema X=N-2} produces
$\dot{Z}_{|_{ Z=\lambda (N-1)} }<0$.

Anyways, solutions are differentiable and decreasing for all time, by converging to zero when $t\to +\infty $, and blow up in finite backward time $T$. 
Note that $\Upsilon_{\bar{z},\bar w}$ does not correspond to a fast decaying positive solution $u,v$ of \eqref{LE}. Indeed, $Y\equiv \tilde{N}_+-2$ yields $v(r)={C}r^{2-\tilde{N}_+}$ and so $\M^+(D^2 v)=0$ in $R^-_{\lambda,W}$.

\medskip

Step 2) $\mathcal{A}_1$ is open.

Let us show that $\mathcal{A}_1$ contains a neighborhood of $(\bar z, \bar w)$, that is, all fast decaying trajectories $\Upsilon_{z,w}$ near $A_0$ in $\mathcal{W}_s(A_0)$ blow up only in $Z$ at $T_*(x,y)$ whenever $W$ is sufficiently small.  The proof that $\mathcal{A}_1$ is open will be a consequence of such argument.
	
We set $D:=q(\tilde{N}_+-2)+N-\tilde{N}_+\ge 1$, and choose $\varepsilon\in (0,\lambda D)$, $\eta>0$ so that $B_\eta(\bar{z},0)\subset B_\rho(0,0)$. Since $\lim_{t\to T_*}\bar{Z}(t)= +\infty$, by continuity of the ODE system with respect to data, for any $(z,w)\in B_\eta(\bar{z},0)$ with $w>0$ and $\Upsilon_{z,w}=(X,Y,Z,W)$, there exists 	$T_{\varepsilon}>T_*$ such that 
$Z(T_{\varepsilon}) =2\Lambda D$,
and $W(t)\in (0,\varepsilon)$ for all $t\geq	T_{\varepsilon}$. 
Note that $Z$ is decreasing up to $T_\varepsilon$ by Proposition \ref{prop flow}(iii). Define $\Psi=Z/W$ in $(T_*,+\infty)$. 
By using \eqref{cota M Z,W} in addition
to $X\leq \tilde{N}_+-2$ and $Y\geq 0$, one finds
\begin{align}\label{eq1 th1.1 A0}	\textstyle \frac{\dot{\Psi}}{\Psi}=qX-pY+M_+(\lambda(N-1)-Z)-M_+(\lambda (N-1)-W)\leq D - \frac{Z}{\Lambda}+\frac{W}{\lambda},	\end{align}	thus $\dot{\Psi}(T_{\varepsilon})<0$. 	
Set $\theta=\inf\{\, 	t\in [T_*,T_{\varepsilon})	:\,\dot{\Psi}<0\, \textrm{ in }(t,T_{\varepsilon})\, \}$, then 
\begin{align}\label{eq2 th 1.1 A0}	\textstyle \Psi(t)\ge \Psi\left(  T_{\varepsilon}\right)  \ge \frac{2\Lambda D}{\varepsilon}> \frac{2\Lambda}{\lambda}\quad \textrm{ for all $t\in [\theta,T_\varepsilon)$. }	\end{align}	Here $Z,W$ do not blow up in $(T_*,T_\varepsilon)$ by definition of $T_*=T_*(z,w)$ and construction of $T_\varepsilon$.

If we had $\theta>T^*$ then $\dot{\Psi}(\theta)=0$, so \eqref{eq1 th1.1 A0} yields $Z(\theta)  \leq \frac{\Lambda}{\lambda}W(\theta)  +\Lambda D <\frac{Z(\theta)}{2}+\lambda D$, by using \eqref{eq2 th 1.1 A0} at $t=\theta$, from which we deduce $Z(\theta)<2\Lambda D$.
But this contradicts $Z(\theta) \ge Z(T_\varepsilon)=2\Lambda D$.
Hence $\theta=T^*$. If $\lim_{t\rightarrow T^{\ast}}Z(t)=\infty$ then $(z,w)\in \Sigma_{A_0}$, and for $R_*=e^{T_*}$ it holds
\begin{align*}
\lim_{t\rightarrow T_*}\Psi(t)=\lim_{r\rightarrow R_*} \frac{v^pv^\prime}{u^q u^\prime}
=\lim_{r\rightarrow R_*}\frac{pv^{p-1}(v^\prime)^2 +v^pv^{\prime\prime}}{qu^{q-1}(u^\prime)^2 +u^q u^{\prime\prime}}
=\lim_{r\rightarrow R_*}\frac{pv^{p-1}(v^\prime)^2 -v^p\,\{\frac{(N-1)}{r}v^\prime+\frac{u^q}{\lambda} \} }{qu^{q-1}(u^\prime)^2 -u^q \{\frac{(N-1)}{r}u^\prime+\frac{v^p}{\lambda} \} }=1,
\end{align*}
by using L'Hospital rule and the fact $u,v$ are concave near $T^*$. But this contradicts \eqref{eq2 th 1.1 A0}, since $\Psi$ is decreasing. Hence $(z,w)\in\mathcal{A}_{1}$, for any $(z,w)\in \mathcal{U}(A_0)$.
Then a neighborhood of $\Upsilon_{\bar{z},\bar w}$ is contained in $\mathcal{A}_{1}$.

\medskip

Next we look at the colision point $\frak P=Q_0=A_0$ (the case $\frak P=P_0=A_0$ will be similar), in which we need to replace $\mathcal{W}_s(\frak P)$ by \eqref{Ws,Wc}.  
When $Y\equiv \tilde{N}_+-2$ and $p=\frac{\tilde N_+}{\tilde N_+ -2}$ the system \eqref{DS+} becomes
\begin{align*}
\textstyle \dot Z = -\frac{Z^2}{\Lambda} \;\; \textrm{ in } R^-_{\lambda , Z} \, , 
\quad
\dot Z = Z(N-\tilde N_+ - \frac{Z}{\lambda} ) \;\;\textrm{ in } R^+_{\lambda, Z} .
\end{align*}
A qualitative analysis on these autonomous equations in $Z$ gives us that the resulting trajectory in $R^-_{\lambda , Z}$ is forever decreasing and blows up in finite backward time.

Meanwhile, in $R^+_{\lambda , Z}$ we have a positive stationary point $Z=N-\tilde N_+$ which produces two types of orbits: one decreasing and blowing up in finite backward time, and another one increasing and defined for all backward time. The latter is not admissible because in the point $Z=\lambda (N-1)$ where the solution $\dot Z$ changes definition we have $\dot Z <0$.
Hence, the trajectory needs to blow up in finite backward time, from which $Z\to +\infty$ as $t\to t_0$ for some $t_0\in \real$.

Observe that when $Y\equiv \tilde{N}_+-2$, then $y\equiv 0$ in \eqref{L(Q0)} and the linearized direction that gives us $W\equiv 0$. In particular, the trajectory $\bar{\Upsilon}:=(\bar X, \tilde N_+-2, \bar Z, 0)$ satisfying $\bar{\Upsilon}=\Upsilon_{\bar x, \tilde N_+-2}$ is such that $(\bar x, \tilde N_+-2)\in \mathcal{A}_1$. As in Step 2 one finds that $\mathcal{A}_1$ is open.

\smallskip

Finally we treat the case $\frak P=Q_0\neq A_0$ (the corresponding $\frak P=P_0\neq A_0$ will be analogous). 

By the proof of Proposition \ref{local study stationary points} 4(i), the formulas \eqref{exp X,Z P0}, \eqref{eig P0 sigma2} with $c_2\neq 0$, $c_4\neq 0$, by properly changing the roles of $X$ and $Y$, $Z$ and $W$, $p$ and $q$) and the two main principal tangent directions gives us that $Z$ and $W$ only depend on $X,Y$. So the manifold around $Q_0$ is also a graph on the variables $X,Y$. Thus, one may consider the trajectory $\bar \Upsilon=\Upsilon_{\bar z, \bar w}=\Upsilon_{\bar x, \tilde{N}_+-2}=(\bar X, \bar Y, \bar Z, \bar W)$ such that $Y\equiv \tilde{N}_+-2$.
Arguing as in Steps 1 and 2 we get that $\mathcal{A}_1$ is nonempty and open.
\end{proof}

\section{Energy analysis}\label{section.1 energy}

Our first goal in this section is to prove our main existence and nonexistence results; see Figure \ref{Fig hyp2}. In terms of regular solutions, we will show that 
$\mathcal{R}_u^+\subset \mathcal{G}$ and $\mathcal{R}_d^+\subset \C$.

We define the following auxiliary hyperbolas for the study of the $\M^+$ operator.
\begin{equation}\label{Hu e Hd}
\overline{\sH}_+ \, : \;\; \frac{1}{p+1}+\frac{1}{q+1}=\frac{\tilde{N}_+ -2}{N},
\qquad
\underline{\sH}_+ \, : \;\; \frac{1}{p+1}+\frac{1}{q+1}=\frac{N-2}{\tilde{N}_+},
\end{equation}
We observe that the upper region $\mathcal{R}^+_u$ contains the hyperbola $\overline{\sH}_+$, and the down region $\mathcal{R}^+_d$ contains the hyperbola $\underline{\sH}_+$, see Figure \ref{Fig hyp2} where \eqref{Hu e Hd} are dashedly displayed. Here, $\underline{\sH}_+$ gives the asymptotic behavior of the upper boundary of $\mathcal{R}^+_d$ when either $p\to \infty$ or $q\to \infty$, and $\overline{\sH}_+$ gives the asymptotic behavior of the boundary of $\mathcal{R}^+_u$ when either $p\to \infty$ or $q\to \infty$. Note that for the region $\mathcal{R}_{s}^+$ in \eqref{region ABQ}, its upper boundary asymptotic is described through $\tH_+$ when either $p\to +\infty$ or $q\to +\infty$.

Let $\sigma$ be a positive parameter, and define the energy functional $E=E_{\sigma}(u,v): [0,\infty)\rightarrow \real$ of a solution $u,v$ of \eqref{P radial m} with $u^\prime, v^\prime<0$ as follows:
\begin{align*}
E(r)=
\begin{cases}
\,r^N 
\left(  u^\prime v^\prime  +\frac{1}{\lambda}\frac{v^{p+1}}{p+1} 
+\frac{1}{\lambda}\frac{u^{q+1}}{q+1}
+\frac{N}{p+1} \frac{v u^\prime}{r}  +\frac{N}{q+1} \frac{u v^\prime}{r} \right) &\textrm{ in }\, \{u^{\prime\prime}<0\}\cap \{v^{\prime\prime}<0\}, \medskip
\\
\,r^\sigma
\left(  u^\prime v^\prime  +\frac{1}{\lambda}\frac{v^{p+1}}{p+1} 
+\frac{1}{\Lambda}\frac{u^{q+1}}{q+1}
+\frac{N}{p+1} \frac{v u^\prime}{r}  +\frac{\tilde{N}_+}{q+1} \frac{u v^\prime}{r} \right) &\textrm{ in }\, \{u^{\prime\prime}<0\}\cap \{v^{\prime\prime}>0\}, \medskip
\\
\,r^\sigma
\left(  u^\prime v^\prime  +\frac{1}{\Lambda}\frac{v^{p+1}}{p+1} 
+\frac{1}{\lambda}\frac{u^{q+1}}{q+1}
+\frac{\tilde{N}_+}{p+1} \frac{v u^\prime}{r}  +\frac{N}{q+1} \frac{u v^\prime}{r} \right) &\textrm{ in }\, \{u^{\prime\prime}>0\}\cap \{v^{\prime\prime}<0\}, \medskip
\\
\,r^{\tilde{N}_+} 
\left(  u^\prime v^\prime  +\frac{1}{\Lambda}\frac{v^{p+1}}{p+1} 
+\frac{1}{\Lambda}\frac{u^{q+1}}{q+1} +\frac{\tilde{N}_+}{p+1} \frac{v u^\prime}{r}  +\frac{\tilde{N}_+}{q+1} \frac{u v^\prime}{r} \right) &\textrm{ in }\, \{u^{\prime\prime}>0\}\cap \{v^{\prime\prime}>0\} .\smallskip
\end{cases}
\end{align*}
Equivalently, in terms of the variables $X,Y,Z,W$, the energy $E(t)=E_{\sigma,A_i}(t,X,Y,Z,W)$ reads as
\begin{align*}
E(t)=
\begin{cases}
e^{t(N-2-\alpha-\beta)}  (XZ)^{\frac{\beta}{2}} (YW)^{\frac{\alpha}{2}} 
\left( {XY}+\frac{XZ}{\lambda (p+1)}+\frac{YW}{\lambda (q+1)}-\frac{NX}{p+1}-\frac{NY}{q+1}
\right) & \hspace{-0.1cm}\textrm{ in } R^+_{\lambda,Z}\cap R^+_{\lambda,W}, \medskip
\\
e^{t(\sigma-2-\alpha-\beta)}  (XZ)^{\frac{\beta}{2}} (YW)^{\frac{\alpha}{2}} 
\left( {XY}+\frac{XZ}{\lambda (p+1)}+\frac{YW}{\Lambda (q+1)}-\frac{N X}{p+1}-\frac{\tilde{N}_+ Y}{q+1}
\right) & \hspace{-0.1cm} \textrm{ in } R^+_{\lambda,Z}\cap R^-_{\lambda,W}, \medskip
\\
e^{t(\sigma-2-\alpha-\beta)}  (XZ)^{\frac{\beta}{2}} (YW)^{\frac{\alpha}{2}} 
\left( {XY}+\frac{XZ}{\Lambda (p+1)}+\frac{YW}{\lambda (q+1)}-\frac{\tilde{N}_+ X}{p+1}-\frac{N Y}{q+1}
\right)  & \hspace{-0.1cm}\textrm{ in } R^-_{\lambda,Z}\cap R^+_{\lambda,W}, \medskip
\\
e^{t(\tilde{N}_+-2-\alpha-\beta)}  (XZ)^{\frac{\beta}{2}} (YW)^{\frac{\alpha}{2}} 
\left( {XY}+\frac{XZ}{\Lambda (p+1)}+\frac{YW}{\Lambda (q+1)}-\frac{\tilde{N}_+ X}{p+1}-\frac{\tilde{N}_+ Y}{q+1}
\right)  & \hspace{-0.1cm}\textrm{ in } R^-_{\lambda,Z}\cap R^-_{\lambda,W}.
\end{cases}
\end{align*}

\smallskip

We observe that $E(t,O)=E(t,N_0)=E(t,A_0)=E(t,K_0)=E(t,L_0)\equiv  0$ for all $t$. Besides,
\begin{center}
$E(t,M_0)=-\frac{4}{pq-1}e^{t(\tilde{N}_+-2-\alpha-\beta)}<0$\;\; for all $t\in \real$,
\end{center}
thus a trajectory which converges to $M_0$ attains the energy value
\begin{align}\label{energia M0}
\textstyle E(M_0)<0 \;\textrm{ on } \tH_+ , \;\;E(M_0)=0 \;\textrm{ below } \tH_+ , \;\; E(M_0)=-\infty \;\textrm{ above } \tH_+ ,\;\;
\end{align}
by understanding the previous expressions as limits as $t\to +\infty$, for $\tH_+$ defined in \eqref{tH}.
Also, \vspace{-0.2cm}
\begin{center}
$E(t,P_0)=-(\tilde{N}_+-2)e^{t(\tilde{N}_+-2-\alpha-\beta)} \{\frac{\tilde{N}_+}{p+1} - \frac{-2+q(\tilde{N}_+-2)}{q+1} \}, $ 
\medskip

$E(t,Q_0)=-(\tilde{N}_+-2)e^{t(\tilde{N}_+-2-\alpha-\beta)} \{\frac{\tilde{N}_+}{q+1} - \frac{-2+p(\tilde{N}_+-2)}{p+1} \}. $
\end{center}
In particular, if a trajectory $\tau(t)$ converges either to $P_0$ or $Q_0$ as $t\to +\infty$, then
\begin{align}\label{energia P0,Q0}
E(P_0)=0, \;\; E(Q_0)=0 \;\;\textrm{ below } \tH_+.
\end{align}

Notice that the energy (defined in $\K$) is not continuous in the variables $X,Y,Z,W$ when crossing the concavity hyperplanes $\pi_{\lambda,Z}$ and $\pi_{\lambda, W}$.
However, the expressions in brackets are so, and the energy preserves sign when the respective solutions $u,v$ change concavity. 

Moreover, the derivative of $E$ with respect to $r=e^t$ is given by
\begin{align*}
E^\prime(r)=
\begin{cases}
\,{r^{N-1}} \,u^{\prime}v^{\prime}\,
\left\{ \frac{N}{p+1}+\frac{N}{q+1}-(N-2)
\right\} \; &\textrm{ in } R^+_{\lambda,Z}\cap R^+_{\lambda,W}, \medskip
\\
\,{r^{\tilde{N}-1}} \,u^{\prime}v^{\prime}
\,
\left\{ \frac{\tilde{N}_\pm}{p+1}+\frac{\tilde{N}_\pm}{q+1}-(\tilde{N}_\pm-2)
\right\} \; &\textrm{ in } R^-_{\lambda,Z}\cap R^-_{\lambda,W} .
\end{cases}
\end{align*}
Now, since regular positive solutions satisfy $u^\prime, v^\prime <0$, it follows that
\begin{align}
\dot{E}\le   0 \; \textrm{\;\; above or on $\tH_\pm$ \;\; in \,}& R^+_{\lambda,Z}\cap R^+_{\lambda,W} \textrm{\; and\;\; in \,} R^-_{\lambda,Z}\cap R^-_{\lambda,W}  \label{sign Eprime<0},\\
\dot{E}>0 \; \textrm{\;\; below $\sH$\;\;  in \,}&  R^+_{\lambda,Z}\cap R^+_{\lambda,W} \textrm{\; and\;\; in \,} R^-_{\lambda,Z}\cap R^-_{\lambda,W}  
\label{sign Eprime>0}.
\end{align}
Furthermore, in the mixed regions where the dependence of the parameter $\sigma$ appears, one finds
\begin{align}\label{eq Esigma prime}
\textstyle
E^\prime(r)=
\begin{cases}
\,{r^{\sigma-1}} \,u^{\prime}v^{\prime}\,
(2-N-\tilde{N}_+ +\sigma+ \frac{N}{p+1}+\frac{\tilde{N}_+}{q+1}
) +  r^{\sigma-1} \,\mathcal{E}_{\sigma,1} (r) \quad \textrm{ in } R^+_{\lambda,Z}\cap R^-_{\lambda,W},
\vspace{0.3cm}
\\
\textstyle
\,{r^{\sigma-1}} \,u^{\prime}v^{\prime}\,
(2-N-\tilde{N}_+ +\sigma+ \frac{\tilde{N}_+}{p+1}+\frac{N}{q+1}
) +  r^{\sigma-1} \,\mathcal{E}_{\sigma,2} (r) \quad \textrm{ in } R^-_{\lambda,Z}\cap R^+_{\lambda,W},
\end{cases}
\end{align}
where
\vspace{-0.3cm}
\begin{align*}
\textstyle
\mathcal{E}_{\sigma,1}(r)=
\{\frac{v^{p+1}}{\lambda (p+1)}+
\frac{N vu^\prime}{r(p+1)} \}\,
(\sigma - N) 
+\{\frac{u^{q+1}}{\Lambda(q+1)}
+\frac{\tilde{N}_+ v^\prime u}{r(q+1)} \}\,
(\sigma -\tilde{N}_+),
\end{align*}
\vspace{-0.5cm}
\begin{align*}
 \textstyle
\mathcal{E}_{\sigma,2}(r)=
\{\frac{v^{p+1}}{\Lambda (p+1)}
+\frac{\tilde{N}_+ vu^\prime}{r(p+1)}\}\,(\sigma - \tilde{N}_+) 
+\{\frac{u^{q+1}}{\lambda(q+1)}
 +\frac{N v^\prime u}{r(q+1)}\}\,(\sigma -N),
\end{align*}
or in terms of the dynamical system variables \eqref{X,Y,Z,W sem a},
\begin{align}\label{Eps sigma,1,2 X}
\textstyle
\mathcal{E}_{\sigma,1}(t)=
{e^{-t(2+\alpha+\beta)}  (XZ)^{\frac{\beta}{2}} (YW)^{\frac{\alpha}{2}} } \left\{\,
\frac{\sigma - N}{p+1} \,X(\frac{Z}{\lambda}-N)+ \frac{\sigma -\tilde{N}_+}{q+1}\,Y(\frac{W}{\Lambda}-\tilde{N}_+)\,
\right\},\nonumber \\
\textstyle
\mathcal{E}_{\sigma,2}(t)=
{e^{-t(2+\alpha+\beta)}  (XZ)^{\frac{\beta}{2}} (YW)^{\frac{\alpha}{2}} } \left\{\,
\frac{\sigma - \tilde{N}_+}{p+1} \,X(\frac{Z}{\Lambda}-\tilde{N}_+)+ \frac{\sigma -N}{q+1}\,Y(\frac{W}{\lambda}-{N})\,
\right\}.
\end{align}

\vspace{0.1cm}

On the other hand, for the operator $\M^-$ we consider the auxiliary hyperbolas
\begin{equation}\label{Hu e Hd-}
\overline{\sH}_- \, : \;\; \frac{1}{p+1}+\frac{1}{q+1}=\frac{N -2}{\tilde{N}_-},
\qquad
\underline{\sH}_- \, : \;\; \frac{1}{p+1}+\frac{1}{q+1}=\frac{\tilde{N}_--2}{N},
\end{equation}
which are contained in the regions $\mathcal{R}^-_u$ in \eqref{Ru-} and $\mathcal{R}^-_d$ in \eqref{Rd-}, respectively.

\vspace{0.1cm}

The energy for the system involving $\M^-$ is established by exchanging the roles of $\lambda$ and $\Lambda$ in the definition of the energy $E(r)$ for $\M^+$.

\begin{proof}[Proof of Theorem \ref{Th regular}]
	
Step 1) Existence of ground state solutions in $\overline{\mathcal{R}}_u^+$.

	Let us first consider the operator $\M^+$.
	We fix a pair $(p,q)$ in the region $\mathcal{R}_u^+$. Recall $\mathcal{R}_u^+$ is contained in the region lying above or on $\tH_+$.
	
	\vspace{0.1cm}
	
	We infer that $E$ is nonincreasing for all $t\in (-\infty,T]$. Indeed, we set $\sigma=N$, then 
	\begin{align}\label{energia resto decrescente}
	\textstyle
	\mathcal{E}_{N,1}(t)=
	{e^{-t(2+\alpha+\beta)}  (XZ)^{\frac{\beta}{2}} (YW)^{\frac{\alpha}{2}} } \left\{\,
	\frac{N-\tilde{N}_+}{q+1}\,Y(\frac{W}{\Lambda}-\tilde{N}_+)\,
	\right\} \le 0 ,\nonumber \\
	\textstyle
	\mathcal{E}_{N,2}(t)=
	{e^{-t(2+\alpha+\beta)}  (XZ)^{\frac{\beta}{2}} (YW)^{\frac{\alpha}{2}} } \left\{\,
	\frac{N - \tilde{N}_+}{p+1} \,X(\frac{Z}{\Lambda}-\tilde{N}_+) \,
	\right\}\le 0,
	\end{align}
since $\tilde{N}_+\leq N$ and Proposition \ref{prop flow} (iii), from which  $Z,\,W<\lambda N\leq \Lambda \tilde{N}_+$.
This fact and the inequalities coming from the region \eqref{Ru} applied to \eqref{eq Esigma prime} yield $E^\prime \le 0$ in the mixed regions. Since $\mathcal{R}_u^+$ in \eqref{Ru} is a region located above the hyperbolas \eqref{sH} and \eqref{tH}, with \eqref{sign Eprime>0} being true, then the desired monotonicity follows.

We assume by contradiction that there not exists any regular solution in $\rN$ of \eqref{LE} for this pair $(p,q)$. 
Then, by Theorem \ref{teo uniqueness trajectory}, there exists a regular solution $(u,v)$ of \eqref{LE}, \eqref{H Dirichlet} in the ball $B_R$, with corresponding regular trajectory $\Gamma(t)=(X(t),Y(t),Z(t),W(t))$ of \eqref{DS+} such that $X(T)=Y(T)=+\infty$ for $T=\mathrm{ln} (R)$. 	

Recall that there exists a one to one correspondence between solutions of \eqref{LE} and trajectories of \eqref{DS+}, in particular between their energies.
We know that the solution $(u,v)$ starts at $r=0$ with zero energy. Thus, as a nonincreasing function, the energy of $(u,v)$ remains nonpositive whenever it is defined, in particular at the limit point.

On the other hand, observe that \begin{center}
	$u^{\prime\prime}(R)=M(-\lambda r^{-1} (N-1)u^\prime)>0$, \; and\; $v^{\prime\prime}(R)=M(-\lambda r^{-1} (N-1)v^\prime)>0$.
\end{center} 
Then at $r=R$ we have 
$E(R)=r^{\tilde{N}_+} u^\prime (R) v^\prime (R)>0$. This yields a contradiction with $E(R)\le 0$.
So by Theorem \ref{teo uniqueness trajectory} there exists a solution of \eqref{LE} in $\rN$.
	
	\vspace{0.05cm}
	
	The proof for $\M^-$ in \eqref{Ru-} is analogous, by taking instead $\sigma=\tilde{N}_- = \max\{N,\tilde{N}_-\}$.

\medskip

Step 2) Nonexistence of ground states in $\mathcal{R}_d^+$.

We consider the operator $\M^+$.
Set $\sigma=\tilde{N}_+$.
With this choice in \eqref{eq Esigma prime} and \eqref{sign Eprime>0}, 
we infer that $E$ is a nondecreasing function for all $t\in \real$ when the pair $p,q$ lies in the region $\mathcal{R}_d^+$ in \eqref{Rd}. Indeed, by \eqref{Eps sigma,1,2 X},
\begin{align}\label{energia resto crescente}
\textstyle
\mathcal{E}_{\tilde{N}_+,1}(t)=
{e^{-t(2+\alpha+\beta)} \, (XZ)^{\frac{\beta}{2}} (YW)^{\frac{\alpha}{2}} } \left\{\,
\frac{\tilde{N}_+- N}{p+1} \,X(\frac{Z}{\lambda}-N)\,
\right\}\ge 0,\nonumber \\
\textstyle
\mathcal{E}_{\tilde{N}_+,2}(t)=
{e^{-t(2+\alpha+\beta)}\,  (XZ)^{\frac{\beta}{2}} (YW)^{\frac{\alpha}{2}} } \left\{\,
\frac{\tilde{N}_+ -N}{q+1}\,Y(\frac{W}{\lambda}-{N})\,
\right\}\ge 0,
\end{align}
since $\tilde{N}_+\leq N$, and $Z,W<\lambda N$ by Proposition \ref{prop flow} (iii).

Assume by contradiction that there exists a nontrivial positive regular solution in $\rN$. Then the correspondent regular trajectory $\Gamma$ starts at $-\infty$ with zero energy from $N_0$. 

Note that it holds the bound $E(r)\leq Cr^{N-2-\alpha-\beta}$ for large $r$ when $\sigma=\tilde{N}_+$, by Proposition \ref{AP bounds} and $\tilde{N}_+\leq N$. Then, since the region $\mathcal{R}_d^+$ in \eqref{Rd} is situated below the hyperbola $\sH$, one has $\lim_{r\rightarrow\infty}E(r)=0$.
But this contradicts the monotonicity of $E$ in the case of a nontrivial pair solution $u,v$ in $\rN$.
So, by Theorem \ref{teo uniqueness trajectory} there exists a solution of the Dirichlet problem in a ball.

The results for $\M^-$ in the region \eqref{Rd-} are analogous, by taking $\sigma=N=\min\{N,\tilde{N}_-\}$.
\end{proof}

\begin{lem}\label{lemma X,Y extremum}
Let $\Gamma=(X,Y,Z,W)$ be a regular trajectory of \eqref{DS+}. If 
\begin{align}\label{hip energia geq 0}
\textrm{$E\geq 0$\;\; in $R^-_{\lambda,Z}\cap R^-_{\lambda,W}$}
\end{align}
then $X$ and $Y$ may have at most one extremal point, which is a maximum. Moreover,
\begin{enumerate}[(i)]
\item if the trajectory $\Gamma$ is defined for all $t$, then it converges to a stationary point;

\item if instead $\Gamma$ blows up in finite time, then $\dot{X},\dot{Y}>0$ in its whole interval of definition. 
\end{enumerate}
An analogous result holds for the system \eqref{DS-} with respect to the operator $\M^-$.
\end{lem}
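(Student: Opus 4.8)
The plan is to treat $X$ and $Y$ symmetrically; I describe the argument for $X$, the one for $Y$ being obtained by interchanging the roles of $(X,Z,p)$ and $(Y,W,q)$ (and, for $\M^-$, also $\lambda\leftrightarrow\Lambda$ and $\tilde{N}_+\leftrightarrow\tilde{N}_-$). First I would locate the critical points of $X$: since $\dot X>0$ on all of $R^+_{\lambda,Z}\cup\pi_{\lambda,Z}$ (as observed right after the definition of $\pi_{1,\lambda}$), any time $t_0$ with $\dot X(t_0)=0$ must have $\Gamma(t_0)\in R^-_{\lambda,Z}$, where the first equation of \eqref{DS+} reads $\dot X=X\,[\,X-(\tilde{N}_+-2)+Z/\Lambda\,]$; hence $\Gamma(t_0)$ lies on the plane $\pi_{1,\lambda}$ of \eqref{pi 1,lambda}, i.e. $Z(t_0)/\Lambda=\tilde{N}_+-2-X(t_0)$. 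Next I would show each such $t_0$ is a strict maximum: writing $g:=X-(\tilde{N}_+-2)+Z/\Lambda$ in $R^-_{\lambda,Z}$, so $\dot X=Xg$, at a zero $t_0$ of $g$ one has $\ddot X(t_0)=X(t_0)\dot g(t_0)=X(t_0)\dot Z(t_0)/\Lambda$, and plugging $\dot Z=Z[\,\tilde{N}_+-pY-Z/\Lambda\,]$ together with the relation from Step~1 gives $\ddot X(t_0)=\Lambda^{-1}X(t_0)Z(t_0)\bigl(2+X(t_0)-pY(t_0)\bigr)$. Thus $t_0$ is a strict local maximum provided $pY(t_0)>X(t_0)+2$, and to obtain this I would substitute $Z/\Lambda=\tilde{N}_+-2-X$ into the bracket of $E$ valid in $R^-_{\lambda,Z}\cap R^-_{\lambda,W}$ and combine \eqref{hip energia geq 0} with the a priori bounds $Z,W<\lambda N\le\Lambda\tilde{N}_+$ from Proposition~\ref{AP bounds} and Proposition~\ref{prop flow}(iii); when instead $\Gamma(t_0)\in R^-_{\lambda,Z}\cap R^+_{\lambda,W}$ I would first use $\dot Y>0$ there and the continuity of the bracket across $\pi_{\lambda,W}$ to reduce to the previous case. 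Since every critical point of $X$ is then a strict local maximum, $X$ has no interior minimum, hence at most one critical point; the same conclusion holds for $Y$.

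For part (i), if $\Gamma$ exists for all $t\in\real$ then by the above both $X$ and $Y$ are eventually monotone and, by Proposition~\ref{AP bounds}, stay in $(0,\tilde{N}_\pm-2)$, so $X(t)\to X_\infty$ and $Y(t)\to Y_\infty$ as $t\to+\infty$. Feeding these limits into the equations for $\dot Z$ and $\dot W$ in \eqref{DS+} turns each into a scalar ODE whose right-hand side converges to a function of $Z$ (resp. $W$) alone; since $Z,W$ remain in the bounded interval $(0,\lambda N)$, a one-dimensional asymptotic argument forces $Z(t)\to Z_\infty$, $W(t)\to W_\infty$, and the limit point $(X_\infty,Y_\infty,Z_\infty,W_\infty)$ must annihilate the vector field $F$ of \eqref{Prob x=(X,Z)}, i.e. it is a stationary point.

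For part (ii), suppose $\Gamma$ is defined only on a maximal interval $(-\infty,T_*)$ with $T_*<\infty$. By Proposition~\ref{AP bounds}(ii) and Proposition~\ref{prop flow}(iii) the coordinates $Z,W$ stay bounded, so blow-up forces $X$ or $Y$ to tend to $+\infty$ as $t\to T_*$. If $X$ had its (unique, strict) maximum at some $t_0$, then $\dot X<0$ for $t>t_0$, so $X$ would remain bounded and it would have to be $Y\to+\infty$; arguing once more as in Step~2 with the bracket of $E$ evaluated along $\Gamma$ as $t\to T_*$, this contradicts the sign condition \eqref{hip energia geq 0}. Symmetrically $Y$ cannot have a maximum either; hence $\dot X,\dot Y>0$ throughout $(-\infty,T_*)$. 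The statement for \eqref{DS-} follows by the same reasoning under the indicated exchange of parameters.

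The hard part will be Step~2: extracting the pointwise inequality $pY(t_0)>X(t_0)+2$ (equivalently $\dot Z(t_0)<0$) from the single one-sided bound \eqref{hip energia geq 0}, and in particular handling critical points that fall in the mixed concavity region $R^-_{\lambda,Z}\cap R^+_{\lambda,W}$, where one has to lean on the continuity of the energy bracket across the concavity hyperplanes $\pi_{\lambda,Z},\pi_{\lambda,W}$ and on the fact that $E$ keeps its sign when $u,v$ change concavity. The asymptotic arguments in parts (i)–(ii) are routine one-dimensional ODE analysis once Step~2 is in place.
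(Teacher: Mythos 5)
Your overall architecture for parts (i) and (ii) matches the paper's, but the core of the lemma --- ``at most one extremal point, which is a maximum'' --- rests entirely on your Step~2, and that step has a genuine gap which you flag but do not close. At a critical time $t_0$ of $X$ one has $\Gamma(t_0)\in\pi_{1,\lambda}$, i.e.\ $Z(t_0)/\Lambda=\tilde N_+-2-X(t_0)$, and substituting this into the energy bracket of \eqref{hip energia geq 0} gives
\begin{align*}
XY-\frac{X(X+2)}{p+1}+\frac{Y}{q+1}\Bigl(\frac{W}{\Lambda}-\tilde N_+\Bigr)\ge 0 ,
\end{align*}
where the last term is negative because $W<\lambda N\le\Lambda\tilde N_+$; dividing by $X>0$ therefore only yields $(p+1)Y(t_0)>X(t_0)+2$. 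Your second-derivative test needs the strictly stronger inequality $pY(t_0)>X(t_0)+2$ (equivalently $\dot Z(t_0)<0$), and the difference $Y(t_0)$ is not controlled by the unquantified positive slack $\frac{Y}{q+1}(\tilde N_+-W/\Lambda)$. So the pointwise classification ``every critical point of $X$ is a strict local maximum'' does not follow from \eqref{hip energia geq 0} by this substitution. In addition, if $\Gamma(t_0)\in R^-_{\lambda,Z}\cap R^+_{\lambda,W}$ the hypothesis gives no information at $t_0$ at all, and ``continuity of the bracket across $\pi_{\lambda,W}$'' does not transport a one-sided inequality from $R^-_{\lambda,W}$ to such a point.

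The paper avoids the pointwise test entirely, and this is the idea you are missing. It supposes a maximum of $X$ at $t_0$ followed by a minimum at $t_1$, notes $\dot Z(t_0)\le 0\le\dot Z(t_1)$ (from $\Lambda\,\partial_{tt}X=X\dot Z$ at critical points of $X$), and then works at an intermediate time $t_2\in[t_0,t_1]$ where $\dot Z$ vanishes at a minimum of $Z$. There the relation $Z(t_2)/\Lambda=\tilde N_+-pY(t_2)$ coming from $\pi_{3,\lambda}$ --- not the $\pi_{1,\lambda}$ relation you use --- is exactly what converts the energy inequality, rewritten as the differential inequality \eqref{eq Ydot lemX,Y max} for $\dot Y$, into the bound $Y(t_2)+2\le\frac{q+1}{p+1}X(t_2)$. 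Moreover $Z_{tt}(t_2)\ge 0$ forces $\dot Y(t_2)\le 0$, which simultaneously places $\Gamma(t_2)$ in $R^-_{\lambda,Z}\cap R^-_{\lambda,W}$ (so the hypothesis is legitimately applicable there, resolving your mixed-region problem) and, combined with $\dot X(t_2)\le 0$, produces a lower bound on $X(t_2)$ that contradicts $X(t_2)\le X(t_0)$. Without this detour through the zero of $\dot Z$ between the two putative extrema, your Step~2 does not go through, and parts (i) and (ii), which rely on it, are left unsupported.
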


\begin{proof} Recall $\dot{X}>0$ in $R^+_{\lambda,Z}$ and $\dot{Y}>0$ in $R^+_{\lambda,W}$.
We observe that hypothesis \eqref{hip energia geq 0} yields \vspace{-0.1cm} 
	\begin{align*}
	\textstyle {XY}+\frac{XZ}{\Lambda (p+1)}+\frac{YW}{\Lambda (q+1)}-\frac{\tilde{N}_+ X}{p+1}-\frac{\tilde{N}_+ Y}{q+1}\geq 0 \;\;\textrm{ in }R^-_{\lambda,Z}\cap R^-_{\lambda,W},
	\end{align*}
	and so, by \eqref{DS+},
	\begin{equation}\label{eq Ydot lemX,Y max}
	\textstyle \dot{Y}=Y(  Y+2-\tilde{N}_+ + \frac{W}{\Lambda})\geq Y[  Y+2-(q+1)X]  +\frac{q+1}{p+1}\,X(\tilde{N}_+ -\frac{Z}{\Lambda}) \;\textrm{ in $R^-_{\lambda,Z}\cap R^-_{\lambda,W}$}.
	\end{equation}
Using \eqref{DS+} again, we get
	\begin{equation}\label{eq Xdot lemX,Y max}
	\textstyle
	\dot{X}=X(  X+2-\tilde{N}_+ + \frac{Z}{\Lambda})  , \quad \dot{Z}=Z(  \tilde{N}_+ -p Y-\frac{Z}{\Lambda}) \textrm{\quad in $R^-_{\lambda,Z}$}.
	\end{equation}
	By Lemma \ref{lemma u,v concave at 0}, $\Gamma$ starts from $N_0$ in the region $R^+_{\lambda,Z}\cap R^+_{\lambda,W}$. Suppose that $X$ has a maximum at $t_{0}$ followed by a minimum at $t_{1}.$ At these times we have $\dot{X}=0$, and so $\Gamma(t_0),\Gamma(t_1)\in R^-_{\lambda,Z}$. 
From \eqref{eq Xdot lemX,Y max} we have
	$\Lambda\partial_{tt} X=X\dot{Z}$\, thus $\dot{Z}(t_{0})\le 0 \le \dot{Z}(t_{1})$. 
There exists
	$t_{2}\in[  t_{0},t_{1}]  $ such that $\dot{Z}(t_{2})=0$, with $t_{2}$
	being a minimum. We obtain $\dot{X}(t_{2})\le 0$ by construction, and so $\Gamma(t_2)\in R^-_{\lambda,Z}$. By definition \eqref{pi 3,4,lambda} one has \vspace{-0.1cm}
	\begin{align}\label{(1) lemmaX,Y extremum}
	\textstyle \frac{Z}{\Lambda}(t_{2})=\tilde{N}_+-p Y(t_{2}),
	\end{align}
	then $0\le Z_{tt}(t_{2})=-p(Z\dot{Y})(t_{2})$, so $\dot{Y}(t_2)\le 0$. In particular, $\Gamma(t_2)\in R^-_{\lambda,W}$, besides
\begin{equation}\label{Y<tildeN-2}
Y(t)< \tilde{N}_+-2 \textrm{ for all } t\leq t_2
\end{equation} 
by Proposition \ref{prop flow} (ii).
We infer that
\begin{align}\label{X(t0)}
X(t_0)<p(\tilde{N}_\pm-2)-2.  
\end{align} 
Indeed, by putting $\dot{X}(t_0)=0$ and $\dot{Z}(t_0)\le 0$ in \eqref{eq Xdot lemX,Y max} one has $\tilde{N}_+-pY(t_0)\le \frac{Z(t_0)}{\Lambda}= \tilde{N}_+-2-X(t_0)$; then we get \eqref{X(t0)} by \eqref{Y<tildeN-2}. 
Further, \eqref{eq Ydot lemX,Y max}, \eqref{(1) lemmaX,Y extremum} yield
	\begin{align}\label{(2) lemmaX,Y extremum}
	\textstyle 0 \ge \dot{Y}(t_{2})\ge Y(t_{2})\, [\,  Y(t_{2})+2-\frac{q+1}{p+1}X(t_{2})\,]  .
	\end{align}
Again by $\dot{X}(t_{2})\le 0$ one gets $(X+\frac{Z}{\Lambda})(t_{2})\le \tilde{N}_+-2$. This and \eqref{(1) lemmaX,Y extremum}, \eqref{(2) lemmaX,Y extremum} imply
	\[
	\textstyle \tilde{N}_+-2-X(t_{2})\ge \frac{Z}{\Lambda}(t_{2})\ge \tilde{N}_+-p\left(\frac{q+1}{p+1}X(t_{2})-2\right)
	\] 
	from which
	\begin{align}\label{(3) lemmaX,Y extremum}
	\textstyle 2(p+1)\le \left( p\,\frac{q+1}{p+1}-1\right)X(t_{2} )=\frac
	{2(p+1)}{(\tilde{N}_+-2)p-2}X(t_{2}) \;\;\textrm{ i.e.\ }\; X(t_{2})\ge p(\tilde{N}_+ -2)-2.
	\end{align}
On the other hand $X(t_{2})\le X(t_{0})<p(\tilde{N}_+ -2)-2$ by \eqref{X(t0)}, which contradicts \eqref{(3) lemmaX,Y extremum}. Then $X$	has at most one extremum. 
Hence, either $X$ is an increasing function, or it has one extremum. In the latter case this extremum is a maximum, so $\dot{X}<0$ for some time on. 
By symmetry of $X$ and $Y$ with respect to their equations, also $Y$ has at most one extremum. 

Concerning $(i)$, independently of having an extremum or not, $X$ has a finite limit in $( 0,\tilde{N}_+-2]$ at $+\infty$, by Proposition \ref{AP bounds}.  
In this case, $Z$ also has at most one extremum, which is a minimum. Indeed, at the points where $\dot{Z}=0$ it happens that $-Z_{tt}$ has the sign of $\dot{Y}$. Thus $Z,W$ both have limits in the interval $[  0,\lambda N)  $. Therefore, $\Gamma$ converges to a stationary point.

Under $(ii)$ though, $\Gamma$ cannot have a maximum point since it is unbounded from above. 
\end{proof}

\begin{corol}\label{cor change concav 1x Rd}
	In $\mathcal{R}_d^\pm$ regular solutions of \eqref{P radial m} or \eqref{P radial m-} in a ball change concavity only once. 
\end{corol}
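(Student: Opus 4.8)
The plan is to translate ``$u$ (resp.\ $v$) changes concavity exactly once'' into the statement that $Z$ (resp.\ $W$) attains the value $\lambda(N-1)$ exactly once along the associated trajectory, and to control these crossings via the fact that $X$ and $Y$ are strictly increasing for regular trajectories in $\mathcal{R}_d^\pm$.

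I would fix $(p,q)\in\mathcal{R}_d^+$ (the operator $\M^-$ being entirely analogous, with $\sigma=N$ and the roles of $\lambda,\Lambda$ exchanged), let $(u,v)$ be a regular solution of \eqref{P radial m} in a ball $B_R$, and let $\Gamma=(X,Y,Z,W)$ be its trajectory of \eqref{DS+}. By Theorem \ref{Th regular}(i) this pair admits no ground state, so $\Gamma$ blows up in finite time, with $X,Y\to+\infty$ as $t\to T:=\ln R$ (Remark \ref{Remark 2F}), while $\Gamma(t)\to N_0$ as $t\to-\infty$. Then I would re-run the energy estimate from Step~2 of the proof of Theorem \ref{Th regular}: with $\sigma=\tilde N_+$, combining \eqref{sign Eprime>0}, \eqref{eq Esigma prime}, \eqref{energia resto crescente} and the two inequalities defining $\mathcal{R}_d^+$ in \eqref{Rd} gives $E'\ge 0$ in each of the four concavity regions; since the bracketed part of $E$ is continuous across $\pi_{\lambda,Z}$ and $\pi_{\lambda,W}$ (so $E$ does not change sign there) and $\lim_{t\to-\infty}E(t)=0$ because $E(t,N_0)\equiv 0$, we conclude $E\ge 0$ along $\Gamma$, in particular \eqref{hip energia geq 0} holds. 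Lemma \ref{lemma X,Y extremum}(ii) then yields $\dot X>0$ and $\dot Y>0$ on the whole interval of existence of $\Gamma$.

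Next I would observe that $\Gamma$ starts in $R^+_{\lambda,Z}\cap R^+_{\lambda,W}$ at $N_0=(0,0,\lambda N,\lambda N)$ (Lemma \ref{lemma u,v concave at 0}), so $Z,W>\lambda(N-1)$ near $-\infty$; and since $u(R)=v(R)=0$ with $u,v>0$ in $B_R$, Hopf's lemma gives $u'(R),v'(R)<0$, whence the first equation of \eqref{P M+ 1Q} forces $u''(R),v''(R)>0$, i.e.\ $Z,W<\lambda(N-1)$ for $t$ near $T$. By the intermediate value theorem $Z$ and $W$ each reach $\lambda(N-1)$ at least once. For the ``at most once'' part, recall $\dot Z=Z(1-pY)$ on $\pi_{\lambda,Z}$: by Proposition \ref{prop flow}(i) the first crossing of $\pi_{\lambda,Z}$ (which goes from $R^+_{\lambda,Z}$ to $R^-_{\lambda,Z}$, as $\Gamma$ starts in $R^+_{\lambda,Z}$) occurs at a time $s_1$ with $Y(s_1)>1/p$; since $Y$ is increasing, $Y>1/p$ for all $t\ge s_1$, so the trajectory, which lies in $R^-_{\lambda,Z}$ right after $s_1$, can never return to $R^+_{\lambda,Z}$, because a crossing from $R^-_{\lambda,Z}$ to $R^+_{\lambda,Z}$ requires $Y<1/p$ at that time (Proposition \ref{prop flow}(i) again). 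Hence $Z$ crosses $\lambda(N-1)$ exactly once; exchanging $(Z,Y,p)$ with $(W,X,q)$ and using $\dot X>0$ gives the same for $W$, so both $u$ and $v$ change concavity exactly once.

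The step I expect to require the most care is the energy bookkeeping in the second paragraph: one must check that the Step~2 monotonicity of $E$ (originally phrased for a putative ground state) is equally valid along a ball trajectory, and that the positivity of $E$ survives the discontinuities of the energy across the concavity hyperplanes $\pi_{\lambda,Z}$, $\pi_{\lambda,W}$. Once $E\ge 0$ is secured, Lemma \ref{lemma X,Y extremum} does the work and the crossing argument above is short.
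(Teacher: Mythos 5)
Your proposal is correct and follows essentially the same route as the paper: take $\sigma=\tilde N_+$ so the energy is nondecreasing along the regular trajectory issued from $N_0$ with zero energy, deduce \eqref{hip energia geq 0}, apply Lemma \ref{lemma X,Y extremum}(ii) to get $\dot X,\dot Y>0$, and then use Proposition \ref{prop flow}(i) to rule out a second crossing of the concavity hyperplanes. The only difference is that you also spell out the ``at least once'' part via the boundary behavior, which the paper leaves implicit.
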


\begin{proof} Let us argue with the region $\mathcal{R}_d^+$ since for $\mathcal{R}_d^-$ it is analogous. By taking $\sigma=\tilde{N}_+$ in the expression of the energy, we obtain that the energy is nondecreasing in the region $\mathcal{R}_d^+$ whenever it is defined. Since a corresponding regular trajectory $\Gamma_p$ starts with zero energy at $N_0$ then hypothesis \eqref{hip energia geq 0} is verified. 
	
	Hence, hypothesis \eqref{hip energia geq 0} is verified and so Lemma \ref{lemma X,Y extremum} (ii) applies to ensure that
	\begin{equation}\label{dotX,Y>0}
	\textrm{$\dot{X}, \, \dot{Y}>0$ \; for all \,$t<T$.}
	\end{equation}
	
	Now we claim that $u,v$ change concavity only once.  
	Recall that they start concave in a neighborhood of $0$ by Lemma \ref{lemma u,v concave at 0}. If for instance $u$ changed concavity twice, then $\Gamma$ would cross the hyperplane $\pi_{\lambda,Z}$ at times $s_1,s_2$, first from $R^+_{\lambda,Z}$ to $R^-_{\lambda,Z}$ and then from $R^-_{\lambda,Z}$ to $R^+_{\lambda,Z}$. By Proposition~\ref{prop flow}~(i) we need to have at these times $Y(s_1)>1/p$ and $Y(s_2)<1/p$. But then $\dot{Y}(s_0)<0$ for some $s_0\in (s_1,s_2)$, which contradicts \eqref{dotX,Y>0}. The argument for $v$ is the same. 
\end{proof}

\begin{proof}[Proof of Theorem \ref{Th concavidade Lapl}]
We have already seen that a regular solution $(u,v)$ is such that $u,v$ change concavity at least once.	
When $\lambda=\Lambda$ we have $\tilde{N}_\pm=N$ and the operator is a multiple of the Laplacian. 

By taking $\sigma=N=\tilde{N}_\pm$, the energy for $\M^+=\M^-=\lambda \Delta$ is defined through a single continuous function, and so is monotonous with respect to the radius, see also \cite[proof of Theorem 1.6]{BV}.
Namely, the energy is always increasing in the subcritical case $\alpha + \beta <N-2$, while it is constant on the critical one $\alpha +\beta =N-2$. Anyway, the corresponding regular trajectory $\Gamma=(X,Y,Z,W)$ starts with zero energy  and remains nonnegative forward in time, in particular in the region $R^-_{\lambda,Z}\cap R^-_{\lambda,W}$. Thus, hypothesis \eqref{hip energia geq 0} is verified and Lemma \ref{lemma X,Y extremum} is applicable.
Under the subcritical case, the proof of Corollary \ref{cor change concav 1x Rd} with $\lambda=\Lambda$ already implies the desired conclusion.

In the critical case though, we are in the situation of Lemma \ref{lemma X,Y extremum} $(i)$, and $(p,q)\in \F$ by \cite[proof of Theorem 1.6]{BV}. 
Note that if we had $\dot{X}>0$ and $\dot{Y}>0$ for all $t\in \real$, then arguing as in the proof of of Corollary \ref{cor change concav 1x Rd} one already obtains the conclusion.

Thus, let us assume that at least one between $X$ or $Y$ has exactly one extremal point, which is a maximum. For instance, say it happens for $X$ at time $T$, then $\dot{X}>0$ for $t<T$, while $\dot{X}<0$ for all $t>T$. 

We claim that, since $X$ has a maximal point at $T$, then $v$ cannot change its concavity twice. 
Otherwise, if $v$ changed it at the points $t_1$ and $t_2$ consecutively, then at these points we would have $X(t_1)>1/q$, $X(t_2)<1/q$ by Proposition~\ref{prop flow} (i). Thus $X$ decreases somewhere in $(t_1,t_2)$ and so $T\in (t_1,t_2)$. Now, using that the hyperplane $\pi_{\lambda,W}$ is contained in the region where $\dot{X}>0$, we know that $\dot{X}(t_2)>0$, which contradicts the fact that $\dot{X}<0$ for all $t>T$.

Now it remains to be proved that $u$ does not change concavity twice. Recall that $Y$ also has at most one extremal point.
First, similarly to the preceding paragraph, one proves that if $Y$ has a maximal point at $T$ then $u$ cannot change its concavity twice.

To finish, suppose that $\dot{Y}>0$ for all $t\in \real$. If $\Gamma$ crossed $\pi_{\lambda,Z}$ at times $s_1$ and $s_2$ consecutively, we would have $Y(s_1)>1/p$ and $Y(s_2)<1/p$ by Proposition~\ref{prop flow} (i). Again, this means that $Y$ decreases at some point before reaching $s_2$. But by the mean value theorem, this yields the existence of a critical point $s_3\in (s_1,s_2)$ for $Y$, i.e.\ $\dot{Y}(s_3)=0$, which contradicts the fact that $Y$ has no extremal points.
The proof is then concluded.
\end{proof}

\begin{rmk} The situation of having one extremal point is only admissible if $\Gamma$ converges to either $P_0$ or $Q_0$. In other words, if $\Gamma$ converges to $A_0$ then both $X$ and $Y$ cannot have extremal points. Indeed, since $M_0=(\alpha,\beta,Z_0,W_0)\in \pi_{1,\lambda}\cap \pi_{3,\lambda}$ we have $X(T)<\alpha <\tilde{N}_\pm-2$ and $Y(T)<\beta <\tilde{N}_\pm-2$. So, in order to converge to $A_0$ at $+\infty$ both $X$ and $Y$ need to increase and reach $\tilde{N}_\pm -2$, instead of decreasing in some interval of time.
\end{rmk}

\begin{proof}[Proof of Theorem \ref{Th exterior}]
Step 1) Existence of fast decaying exterior domain solutions in $\mathcal{R}_u^\pm$.

We assume by contradiction the conclusion does not hold.
Then, by Theorem \ref{Th1.1 lemma A0} there exists a fast decaying trajectory $\Upsilon$, with corresponding solution $(u,v)$ of \eqref{LE} defined in the whole $\rN$. Thus, the a priori bounds in Proposition \ref{AP bounds} imply that $\Upsilon$ stays in the box $\mathcal{B}_+$, in particular satisfying $Z,W<\lambda N$ for all $t\in \real$. As a consequence, by \eqref{energia resto decrescente} the energy of $\Upsilon$ is a nonincreasing function.

Observe that $\Upsilon$ arrives at $+\infty$ with zero energy at the point $A_0$. Set $\sigma=N$. Then the energy of $\Upsilon$ remains nonnegative  whenever it is defined.
Once again we use Proposition \ref{AP bounds} to infer
\begin{center}
	$|E(\Upsilon,r) | \le C\, r^{N-2-\alpha-\beta} \to 0$\;\; as $r \to 0$,
\end{center}
since $\alpha +\beta <N-2$ represents the region above $\sH$, which in turn contains $\mathcal{R}_u^+$.
Thus $E(0)=0$, which is impossible for a nontrivial trajectory. 

\medskip

Step 2) Nonexistence in $\overline{\mathcal{R}}_D^\pm$.

Assume on the contrary that there exists a nontrivial exterior domain solution of \eqref{LE} in $\rN\setminus B_R$, for some $R>0$ with $\partial_\nu u =\partial_\nu v=0$ and $u,v=\kappa$ on $\partial B_R$ for some $\kappa >0$. 
Its corresponding trajectory $\Upsilon$ blows up at finite time $T=\mathrm{ln}(R)\in \real$ such that $\lim_{t\to T^+}Z(t)=\lim_{t\to T^+}W(t)=+\infty$. 
	
We fix the operator $\M^+$. 
For $\frak A , \frak B$ we define a new energy function as follows
\begin{align}\label{energia 2}
\mathbb E(r)=
\,r^{\tilde N_+}
\left(  u^\prime v^\prime  +\frac{1}{\Lambda}\frac{v^{p+1}}{p+1} 
+\frac{1}{\Lambda}\frac{u^{q+1}}{q+1}
+\frak A \frac{v u^\prime}{r}  + \frak B \frac{u v^\prime}{r} \right) .
\end{align}
Then we rewrite the problem \eqref{P radial m} as
\begin{center}
$u^{\prime\prime}+\frac{\hat N -1}{r}u^\prime = -\frac{v^p}{\hat \sigma}$, \qquad $v^{\prime\prime}+\frac{\bar N -1}{r}v^\prime = -\frac{u^q}{\bar \sigma}$,
\end{center}
where $(\hat N, \hat \sigma)$ and $(\bar N, \bar \sigma)$ are either $(N, \lambda)$ or $(\tilde N_+, \Lambda)$.
So we deduce
\begin{align*}
\textstyle
\mathbb E^\prime(r) &\ge
\,{r^{\tilde N_+-1}} \,u^{\prime}v^{\prime}\,
(2-\hat N -\bar N +\tilde N_+ +\frak A +\frak B) 
+ r^{\tilde N_+-2} \frak A (\tilde N_+ -\hat N) vu^\prime
+ r^{\tilde N_+-2} \frak B (\tilde N_+ -\bar N) uv^\prime 
\\ & \textstyle + v^{p+1} r^{\tilde N_+-1}\, \{\frac{1}{\Lambda}\frac{\tilde N_+}{p+1}- \frac{ \frak A}{\hat \sigma} \}
+ u^{q+1} r^{\tilde N_+-1}\, \{\frac{1}{\Lambda} \frac{\tilde N_+}{q+1}-\frac {\frak B}{\bar \sigma}\}.
\end{align*}

Let $\delta\ge 0$ be such that $\frac{1}{p+1}+\frac{1}{q+1}=  \frac{\Lambda}{\lambda} \frac{2N-\tilde N_+-2}{\tilde N_+} +\delta$. Then we define $\frak A :=\frac{\lambda}{\Lambda}  \frac{\tilde N_+}{p+1}-\frac{\delta}{2}$ and  $\frak B :=\frac{\lambda}{\Lambda}  \frac{\tilde N_+}{q+1}-\frac{\delta}{2}$. Hence $\frak A +\frak B = 2 N-\tilde N_+ -2$. 
These and $u^\prime, v^\prime <0$ yield
$\mathbb E^\prime (r) \ge 0$ for all $r\ge R$.\smallskip

Note that $E(\Upsilon,T)>0$, since $u^\prime (R)=v^\prime (R)=0$. Thus $E(t)>0$ for all $t\ge T$. 
	
Firstly, if $\Upsilon$ were a fast decaying trajectory, then it would end at $+\infty$ on either $A_0$, $P_0$ or $Q_0$. But all of these points have zero energy, see \eqref{energia P0,Q0}, which is impossible due to the monotonicity of $E$.
	
Now assume by contradiction $\Upsilon$ is a slow decaying trajectory i.e.\ $\omega(\Upsilon)=M_0$.
Since the region $\mathcal{R}_d^+$ is contained in the region below $\tH_+$, then $\Upsilon$ would end with zero energy by \eqref{energia M0}. But this contradicts the sign of the energy at the blow-up point $T$.
Finally, Lemma \ref{lemma X,Y extremum} concludes that no exterior domain oscillating solutions are admissible.

For $\M^-$ it is analogous, by considering the energy
\begin{align*}
\mathbb E(r)=
\,r^{N}
\left(  u^\prime v^\prime  +\frac{1}{\Lambda}\frac{v^{p+1}}{p+1} 
+\frac{1}{\Lambda}\frac{u^{q+1}}{q+1}
+\frak A \frac{v u^\prime}{r}  + \frak B \frac{u v^\prime}{r} \right) ,
\end{align*}
with $\frak A :=\frac{\lambda}{\Lambda}  \frac{N}{p+1}-\frac{\delta}{2}$ and  $\frak B :=\frac{\lambda}{\Lambda}  \frac{N}{q+1}-\frac{\delta}{2}$, whenever $\frac{1}{p+1}+\frac{1}{q+1}=  \frac{\Lambda}{\lambda} \frac{2\tilde N_--N-2}{\tilde N_-} +\delta$, for some $\delta \ge 0$.

\medskip

If $\lambda=\Lambda$, 
on the hyperbola $\sH$ in \eqref{sH}, the fast decaying trajectory corresponds to a solution defined in the whole $\rN$. Moreover, the energy of any trajectory of \eqref{DS+} is identically zero. This reduces the dimension of the system, where for instance the variable $W$ disappears.
It follows by \cite{BV, VanderUni} the analysis associated to the $3$-dimensional dynamical system in this case.
\end{proof}

\begin{proof}[Proof of Theorem \ref{Th singular}] The existence follows from item 1.1 in the proof of Theorem \ref{Th exterior}. Indeed, the nonexistence of fast decaying exterior domain solutions and the alternative in Theorem~\ref{Th1.1 lemma A0} leads to a fast decaying solution of \eqref{LE} in $\rN$. Similarly, the nonexistence part follows from Step 2 in the proof of Theorem \ref{Th exterior} Theorem~\ref{Th1.1 lemma A0}.
\end{proof}

\begin{proof}[Proof of Theorem \ref{Th exterior Dir}] Let $\lambda=\Lambda$ and $R>0$.
Assume by contradiction there exists a nontrivial exterior domain solution of \eqref{LE} in $\rN\setminus B_R$, with $u,v=0$ on $\partial B_R$. 

For $\frak A , \frak B$ we consider the energy \eqref{energia 2}. Now, since $N=\tilde N_+$, by taking $\delta\ge 0$ so that $\frac{N}{p+1}+\frac{N}{q+1}=N-2+\delta$, with  $\frak A = \frac{N}{p+1}-\frac{\delta}{2}$ and  $\frak B = \frac{N}{q+1}-\frac{\delta}{2}$, then we get 
$\mathbb E^\prime (r)=\frac{\delta r^{N-1}}{2\Lambda}(v^{p+1}+u^{q+1}) \ge 0$ for all $r\ge R$.
	
\medskip
	
Since  $\mathbb E$ is increasing and $\mathbb E(\Upsilon, R)>0$, we have $\mathbb E>0$ for all $r\ge R$. Now, as in Step 1 of the proof of Theorem \ref{Th exterior}, $\Upsilon$ cannot approach neither $A_0$, $P_0$, $Q_0$ nor $M_0$ at infinity because they have zero energy; while $\Upsilon$ is not oscillating at infinity by Lemma \ref{lemma X,Y extremum}. Hence, the existence of $\Upsilon$ is denied.
\end{proof}

\textbf{{Acknowledgments.}} L.\ Maia was supported by FAPDF, CAPES, and CNPq grant 309866/2020-0. G.\ Nornberg was supported by FAPESP grants 2018/04000-9 and 2019/03101-9, São Paulo Research Foundation. F.\ Pacella was supported by INDAM-GNAMPA.

\bibliography{bibtex}
\bibliographystyle{abbrv}

\end{document}